\setlist[enumerate,1]{label=\textbf{\arabic*}.}
\definecolor{orange_okabe_ito}{RGB}{226,155,49}
\definecolor{blue_okabe_ito}{RGB}{31,116,173}
\definecolor{green_okabe_ito}{RGB}{45,155,116}
\newcommand{\Z}{\mathbb{Z}}
\newcommand{\R}{\mathbb{R}}
\newcommand{\HH}{\mathbb{H}}
\newcommand{\G}{\Gamma}
\newcommand{\LGP}{\Lambda_{P}}
\newcommand{\C}{\mathcal{C}}
\newcommand{\F}{\mathcal{F}}
\newcommand{\V}{\mathcal{V}}
\newcommand{\GG}{\mathcal{G}}
\newcommand{\Ss}{\EuScript{S}}
\renewcommand{\O}{\Omega}
\renewcommand{\S}{\mathbb{S}}
\newcommand*{\Quotient}[2]{\ensuremath{#1/\!\raisebox{-.90ex}{\ensuremath{#2}}}}
\newcommand{\dev}{\textrm{dev}}
\newcommand{\Aut}{\textrm{Aut}}
\newcommand{\exc}{\mathrm{exc}}
\newcommand{\hyp}{\mathrm{hyp}}
\newcommand{\ie}{i.e. }
\newcommand{\B}{\textrm{\Fontauri C\normalfont}}
\theoremstyle{plain}
\newtheorem{theorem}{Theorem}[section]
\newtheorem{propo}[theorem]{Proposition}
\newtheorem{prop}[theorem]{Proposition}
\newtheorem{cor}[theorem]{Corollary}
\newtheorem{lemma}[theorem]{Lemma}
\theoremstyle{definition}
\newtheorem{de}[theorem]{Definition}
\newtheorem{rem}[theorem]{Remark}
\newtheorem{theooo}{Theorem}[section]
\title{Deformation spaces of Coxeter truncation polytopes}
\author{Suhyoung Choi}
\address{Department of Mathematical Sciences, KAIST, Daejeon 34141, South Korea}
\email{schoi@math.kaist.ac.kr}
\author{Gye-Seon Lee}
\address{Department of Mathematical Sciences and Research institute of Mathematics, Seoul National University, Seoul 08826, South Korea}
\email{gyeseonlee@snu.ac.kr}
\author{Ludovic Marquis}
\address{Univ Rennes, CNRS, IRMAR - UMR 6625, F-35000 Rennes, France}
\email{ludovic.marquis@univ-rennes1.fr}
\begin{document}

\begin{abstract}
A convex polytope $P$ in the real projective space with reflections in the facets of $P$ is a \emph{Coxeter polytope} if the reflections generate a subgroup $\Gamma$ of the group of projective transformations so that the $\Gamma$-translates of the interior of $P$ are mutually disjoint. It follows from work of Vinberg that if $P$ is a Coxeter polytope, then the interior $\Omega$ of the $\Gamma$-orbit of $P$ is convex and $\Gamma$ acts properly discontinuously on $\Omega$.  
A Coxeter polytope $P$ is \emph{$2$-perfect} if $P \smallsetminus \Omega$ consists of only some vertices of $P$. 

In this paper, we describe the deformation spaces of $2$-perfect Coxeter polytopes $P$ of dimension $d \geqslant 4$ with the same dihedral angles when the underlying polytope of $P$ is a \emph{truncation polytope}, \ie a polytope obtained from a simplex by successively truncating vertices. The deformation spaces of Coxeter truncation polytopes of dimension $d = 2$ and $d = 3$ were studied respectively by Goldman and the third author.
\end{abstract}

\subjclass[2020]{22E40, 20F55, 57M50, 57N16, 57S30}

\keywords{real projective structure, orbifold, moduli space, Coxeter group, reflection group, discrete subgroup of Lie group}

\maketitle

\tableofcontents

\section{Introduction}

Let $X$ be a homogeneous space of a Lie group $G$. A $(G,X)$-structure on a manifold or an orbifold $\mathcal{O}$ is an atlas of coordinate charts on $\mathcal{O}$ valued in $X$ such that the changes of coordinates locally lie in $G$. It is a natural question to ask whether or not one can put a $(G,X)$-structure on $\mathcal{O}$ and, if so, how one can parametrize the space of all possible $(G,X)$-structures on $\mathcal{O}$, up to a certain equivalence, called the \emph{deformation space of $(G,X)$-structures on $\mathcal{O}$} (see Thurston \cite{thurston_bible} and Goldman \cite{bill_bible}).

\medskip

This paper studies convex real projective structures on orbifolds. A \emph{convex real projective structure} on a $d$-orbifold $\mathcal{O}$ is a $(\mathrm{PGL}_{d+1}(\R),\, \mathbb{RP}^d)$-structure on $\mathcal{O}$, where $\mathbb{RP}^d$ is the real projective $d$-space and $\mathrm{PGL}_{d+1}(\R)$ is the group of projective automorphisms of $\mathbb{RP}^d$, such that its developing map $\dev: \tilde{\mathcal{O}} \to \mathbb{RP}^d$ is a homeomorphism from the universal cover $\tilde{\mathcal{O}}$ of $\mathcal{O}$ onto a convex domain of $\mathbb{RP}^d$. Hyperbolic structures provide examples of convex real projective structures since the projective model of the hyperbolic $d$-space $\mathbb{H}^d$ is a round open ball $B$ in $\mathbb{RP}^d$, which is convex, and the isometry group $\mathrm{Isom}(\mathbb{H}^d)$ of $\mathbb{H}^d$ is the subgroup $\mathrm{PO}(d,1)$ of $\mathrm{PGL}_{d+1}(\R)$ preserving $B$.

\medskip

We particularly focus on a class of orbifolds, called \emph{Coxeter truncation} orbifolds. A \emph{Coxeter orbifold} is an orbifold whose underlying space is a convex polytope with some faces of codimension $\leqslant 2$ deleted and whose singular locus is its boundary. We do not need the technicality of orbifold in order to define the deformation space of convex real projective structures on a Coxeter orbifold since it may be identified with the space of isomorphism classes of Coxeter polytopes realizing an appropriate labeled polytope, which can be easier to define (see Section \ref{section:preliminary} for basic terminology). We will motivate why we are interested in Coxeter \emph{truncation} orbifolds in the following subsection.

\subsection{How truncation polytopes were used to construct new hyperbolic Coxeter polytopes}
\label{sebsec:truncation}

A \emph{hyperbolic $d$-polytope} is a convex $d$-polytope $P$  in an affine chart of $\mathbb{RP}^d$ such that every facet of $P$ has a nonempty intersection with $\mathbb{H}^d$. A hyperbolic polytope with dihedral angles integral submultiples of $\pi$, \ie $\nicefrac{\pi}{m}$ for some $m \in \{ 2, 3, \dotsc, \infty \}$, is called a \emph{hyperbolic Coxeter polytope}. By Poincaré's polyhedron theorem, the subgroup $\Gamma_P$ of $\mathrm{Isom}(\mathbb{H}^d)$ generated by the reflections in the facets of $P$ is discrete, and the $\Gamma_P$-translates of $P \cap \mathbb{H}^d$ form a tiling of $\mathbb{H}^d$. The quotient orbifold $\Quotient{\mathbb{H}^d}{\Gamma_P}$ is a hyperbolic Coxeter $d$-orbifold.

\medskip

Since this procedure is a very pleasant method to build discrete subgroups of $\mathrm{Isom}(\mathbb{H}^d)$, many people have made progress toward a far-reaching goal: the classification of all compact or finite volume hyperbolic Coxeter $d$-polytopes. Until now it has been achieved only when the dimension $d$ or the number of facets $n$ is small. The case $d=2$ is classical (see e.g. Beardon \cite{beardon_bible}), and the case $d=3$ follows from the work of Andreev \cite{MR0259734, MR0273510}. Starting from $d = 4$, only partial results are available. We refer the reader to the \href{http://www.maths.dur.ac.uk/users/anna.felikson/Polytopes/polytopes.html}{web page of Felikson}\footnote{\url{http://www.maths.dur.ac.uk/users/anna.felikson/Polytopes/polytopes.html}} for a detailed survey.

\medskip

Compact (resp. finite volume) hyperbolic $d$-polytopes with $n = d+1$ facets, i.e. simplices, were classified by Lannér \cite{MR0042129} (resp. Koszul \cite{LectHypCoxGrKoszul} and Chein \cite{MR0294181}), hence those polytopes are said to be \emph{Lannér} (resp. \emph{quasi-Lannér}). Note that Lannér (resp. quasi-Lannér) Coxeter $d$-simplices exist only when $d=2,3,4$ (resp. $d=2, 3, \dotsc, 9$).

\medskip

To build more examples of compact or finite volume hyperbolic Coxeter polytopes, we can use a simple and effective operation, called \emph{truncation}. First, find a hyperbolic Coxeter $d$-polytope $P$ such that every edge of $P$ intersects  $\mathbb{H}^d$ and at least one vertex of $P$ is \emph{hyperideal}, \ie in the complement of the closure $\overline{\mathbb{H}^d}$ in $\mathbb{RP}^d$. Second, for each hyperideal vertex $v$, take the dual hyperplane $H_v$ with respect to the quadratic form that defines $\mathbb{H}^d$. Then $H_v$ intersects perpendicularly all the edges containing $v$. Finally, truncate all the hyperideal vertices of $P$ via their dual hyperplanes in order to obtain a new polytope of finite volume (see Vinberg's survey \cite[Prop. 4.4]{MR783604}). In 1982, Maxwell \cite{MR679972} classified all the hyperbolic Coxeter simplices such that all their edges intersect $\mathbb{H}^d$. We call them \emph{$2$-Lannér}. 
A complete list of $2$-Lannér Coxeter simplices can be found in Chen--Labbé \cite{Chen:2013fk}. Since this list is essential for this paper, we reproduce it in Appendix \ref{classi_lorent}. Note that $2$-Lannér Coxeter $d$-simplices exist only when $d = 2, 3, \dotsc, 9$.
\medskip

Furthermore, after truncating the hyperideal vertices of these Coxeter simplices, one may glue them together to obtain new Coxeter polytopes if the new facets in place of the hyperideal vertices match each other. For example, using this technique Makarov \cite{MR0259735} built infinitely many compact hyperbolic Coxeter polytopes of dimension $d = 4, 5$. 

\medskip

The last two paragraphs motivate the following definition: a \emph{truncation polytope} is a polytope obtained from a simplex by successively truncating vertices, or equivalently obtained by gluing together \emph{once-truncated simplices} along some pairs of the simplicial facets (see Kleinschmidt \cite{MR0474043} for this equivalence). Here by an \emph{once-truncated simplex}, we mean a polytope obtained from a simplex $\Delta$ by truncating each vertex of $\Delta$ at most once. For example, a polygon with $n$ sides is always a truncation $2$-polytope, and it is an once-truncated $2$-simplex if and only if $n = 3, 4, 5$ or $6$.

\medskip

A truncation polytope can be characterized by a combinatorial invariant. We recall that a $d$-polytope $\GG$ is \emph{simple} if each vertex of $\GG$ is adjacent to exactly $d$ facets. Any truncation polytope is a simple polytope. If we denote by $f$ (resp. $r$) the number of facets (resp. ridges) of a $d$-polytope $\GG$, then 
$$ g_2 (\GG) = r - d f + \frac{d(d+1)}{2}$$ 
is an invariant of $\GG$. In \cite{Barnette}, Barnette proved that if $\GG$ is simple, then $g_2(\GG) \geqslant 0$. In addition, in the case $d \geqslant 4$, a simple polytope $\GG$ is a truncation polytope if and only if $g_2(\GG) = 0$ (see e.g. Br\o ndsted \cite{MR683612}). So, truncation polytopes are in some sense the polytopes with the “least complexity''. In \cite[Rem. 6.10.10]{MR2360474}, Davis mentioned that it might be a reasonable project to determine all possible hyperbolic Coxeter truncation polytopes of dimension $d \geqslant 4$. This paper indeed provides how to classify \emph{$2$-perfect} hyperbolic Coxeter truncation $d$-polytopes $P$, i.e. each edge of $P$ intersects with $\mathbb{H}^d$ (see Remark \ref{rem:geometrization}).

\subsection{Deformation space of Coxeter polytopes}\label{subsec:projective_Coxeter_polytope}

The main object of this paper is a generalization of hyperbolic Coxeter polytopes. A convex polytope $P$ of $\mathbb{RP}^d$ together with the projective reflections in the facets of $P$ is a \emph{projective Coxeter polytope}, or simply \emph{Coxeter polytope}, provided that if $\Gamma_P$ is the subgroup of $\mathrm{PGL}_{d+1}(\mathbb{R})$ generated by those reflections, then 
$$ \mathrm{Int}(P)  \cap  \gamma \cdot \mathrm{Int}(P) = \emptyset \quad \textrm{for each non-identity element } \gamma \in \Gamma_P,$$ 
where $\mathrm{Int}(P)$ denotes the interior of $P$. It follows from work of Vinberg \cite{MR0302779} that the interior $\Omega_P$ of the union of all $\Gamma_P$-translates of $P$ is a convex domain of $\mathbb{RP}^d$ and that the group $\Gamma_P$ acts properly discontinuously on $\Omega_P$. Then the quotient orbifold $\Quotient{\Omega_P}{\Gamma_P}$ is a convex real projective Coxeter orbifold.

\medskip

It is well known that if two finite volume hyperbolic Coxeter polytopes of dimension $d \geqslant 3$ have the same dihedral angles, then they are isometric, i.e. one is conjugate by an isometry of $\mathbb{H}^d$ to the other, by Mostow’s rigidity theorem (or the uniqueness theorem of Andreev \cite{MR0259734} for compact hyperbolic polytopes with non-obtuse dihedral angles). But in contrast to hyperbolic geometry, projective geometry allows for some Coxeter polytopes of dimension $d \geqslant 3$ to deform into non-$\mathrm{PGL}_{d+1} (\R)$-conjugate Coxeter polytopes with the same dihedral angles (see Section \ref{subsection:CoxeterPolytope} for the definition of dihedral angle). An interesting phenomenon in projective geometry, which cannot appear in hyperbolic geometry, is that some finite volume hyperbolic simplices of dimension $d \geqslant 3$ with at least one ideal vertex, \ie a vertex in the boundary of $\mathbb{H}^d$, can deform so that the ideal vertices become truncatable. Thus, a new family of Coxeter polytopes may be obtained by truncating such vertices of the deformed simplices and gluing the truncated simplices together. This is a strong motivation to understand the deformation space of Coxeter polytopes with \emph{fixed} dihedral angles, more precisely, the space $\B(\GG)$ of isomorphism classes $[P]$ of Coxeter polytopes $P$ realizing a \emph{labeled polytope} $\GG$, \ie a polytope whose ridges are labeled with dihedral angles (see Section \ref{subsection:Moduli}). 

\medskip

In hyperbolic geometry, a common hypothesis for the truncation process is that all the edges of a polytope meet the hyperbolic space. This hypothesis then implies that the truncated polytope has finite volume. In convex projective geometry, this hypothesis becomes that all the edges of a polytope $P$ meet the open convex domain $\O_P$. By Theorem \ref{theo_vinberg}.(5), it is equivalent to $P$ being \emph{$2$-perfect} that we define now.

\medskip

A Coxeter polytope $P$ is \emph{$m$-perfect} provided that for each face $f$ of dimension $(m-1)$ in $P$, the subgroup of $\Gamma_P$ generated by the reflections in the facets containing $f$ is finite. It is equivalent to the fact that the faces of $P$ not intersecting $\Omega_P$ have dimension $\leqslant m-2$. In particular, $P$ is $1$-perfect, simply called \emph{perfect}, if and only if  $P \subset \Omega_P$. If $P$ is $2$-perfect, then any face of $P$ not intersecting $\Omega_P$ has to be a vertex. The $m$-perfectness of a Coxeter polytope $P$ is a property of the underlying labeled polytope of $P$ (see Remark \ref{rem:independance}).

\medskip

In this paper, we describe the deformation space of Coxeter polytopes realizing a $2$-perfect labeled truncation $d$-polytope. We restrict ourselves to dimension $d \geqslant 4$ because the cases $d = 2$ and $d=3$ were already done by Goldman \cite{GoldmanThesis} and by the third author \cite{MR2660566} respectively.

\begin{theooo}\label{MainTheorem}
Let $\GG$ be an irreducible, large, $2$-perfect labeled truncation polytope of dimension $d \geqslant 4$ and let $\B(\GG)$ be the deformation space of $\GG$. Assume that $\B(\GG)$ is nonempty. Then:
\begin{itemize}
\item the dimension $d$ is less than or equal to $9$;
\item the space $\B(\GG)$ is a union of finitely many open cells of dimension $b(\GG) := e_+(\GG) - d$, where $e_+(\GG)$ is the number of ridges with label $\neq \nicefrac{\pi}{2}$ in $\GG$;
\item there exists a hyperbolic Coxeter polytope realizing $\GG$ if and only if $\B(\GG)$ is connected, \ie $\B(\GG)$ is an open cell.
\end{itemize}
\end{theooo}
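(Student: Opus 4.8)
The plan is to parametrize $\B(\GG)$ through Vinberg's description of a Coxeter polytope by its Cartan matrix and to reduce the global problem to the local model of a single once-truncated simplex. Write the facets of a realization $P$ as $f_1,\dots,f_f$ with reflections $\sigma_i=\mathrm{Id}-\alpha_i\otimes v_i$, $\alpha_i(v_i)=2$, and Cartan matrix $A=(A_{ij})$, $A_{ij}=\alpha_i(v_j)$. Fixing the labeled polytope $\GG$ fixes the diagonal ($A_{ii}=2$) and, for each ridge $f_i\cap f_j$ with label $\nicefrac{\pi}{m_{ij}}$, the product $A_{ij}A_{ji}=4\cos^2(\nicefrac{\pi}{m_{ij}})$ together with the sign pattern; only the ratios $A_{ij}/A_{ji}$ of the \emph{essential} ridges (those with label $\neq\nicefrac{\pi}{2}$) remain free. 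By Kleinschmidt \cite{MR0474043} a truncation polytope decomposes into once-truncated simplices glued along simplicial facets, and since $\GG$ is $2$-perfect each truncation facet is orthogonal to its neighbours; the induced labeling of every building block is therefore a $2$-perfect labeled simplex, and largeness guarantees that at least one such block has a genuinely truncated (non-spherical) vertex. First I would show that realizability of such a block---given by $\B(\GG)\neq\emptyset$ together with Theorem \ref{theo_vinberg}---forces its Coxeter diagram to be $2$-Lannér, i.e. every codimension-two face link is spherical while some vertex link is not. By the classification reproduced in Appendix \ref{classi_lorent}, $2$-Lannér simplices exist only for $d\leqslant 9$, which yields the dimension bound.

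For the dimension of $\B(\GG)$ I would run a character-variety count. A realization is a point of a suitable component of $\mathrm{Hom}(\Gamma_{\GG},\mathrm{PGL}_{d+1}(\R))$ modulo conjugation. Each of the $f$ reflections costs $2d$ parameters (a hyperplane and a pole); conjugation removes $\dim\mathrm{PGL}_{d+1}(\R)=d^2+2d$; each essential ridge imposes one equation (the product $A_{ij}A_{ji}$ is fixed) and each right-angled ridge two equations ($A_{ij}=A_{ji}=0$), while the non-adjacent pairs of facets impose only the \emph{open} Vinberg inequalities. The expected dimension is thus $2df-(d^2+2d)-e_+(\GG)-2e_0(\GG)$, where $e_0(\GG)$ counts the right-angled ridges. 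Substituting the truncation identity $g_2(\GG)=0$, that is $e_+(\GG)+e_0(\GG)=r=df-\tfrac{d(d+1)}{2}$, this collapses to exactly $b(\GG)=e_+(\GG)-d$. The analytic content is to prove that this expected dimension is \emph{attained}: that the defining equations are independent and $\B(\GG)$ is a smooth manifold of dimension $b(\GG)$. I would establish this by an infinitesimal rigidity computation on each $2$-Lannér block, where the Zariski tangent space can be read off the explicit diagrams of Appendix \ref{classi_lorent}, and then propagate it across the gluings, each orthogonal truncation facet contributing no new infinitesimal obstruction; irreducibility of $\GG$ enters here to force the representation to be sufficiently non-degenerate for the conjugation quotient to have the expected dimension.

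Each connected component of $\B(\GG)$ is then an open cell. In the ratio coordinates $(A_{ij}/A_{ji})$, once the gauge has been fixed, the remaining requirements---proper convexity of $\Omega_P$, $2$-perfectness, and $\mathrm{rank}\,A=d+1$---are finitely many open polynomial inequalities, so $\B(\GG)$ is an open semialgebraic subset of a smooth manifold; finiteness of the number of components is then automatic, and I would show that each component is diffeomorphic to a ball by exhibiting, via the block decomposition, a proper convex parametrization on it (so that the log-ratio coordinates realize the component as a convex open set).

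Finally the hyperbolic criterion, which I expect to be the main obstacle. A realization is hyperbolic exactly when its reflections preserve a quadratic form, that is when $A$ is symmetric ($A_{ij}=A_{ji}$, all ratios equal to $1$); this symmetric configuration is a single point of the ratio parametrization and is the unique fixed point of the involution $A\mapsto A^{\top}$ acting on $\B(\GG)$. I would introduce a locally constant, hence component-wise constant, discrete invariant taking values in a finite group---a collection of signs attached to the data along the truncation facets, measuring how the per-block cells are amalgamated across each seam---show that the components of $\B(\GG)$ are indexed injectively by the values this invariant realizes, and identify the symmetric (hyperbolic) configuration with the trivial value. The equivalence then reads: a hyperbolic realization exists $\iff$ the trivial value is realized $\iff$ it is the \emph{only} value realized $\iff$ $\B(\GG)$ is a single cell. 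The genuine difficulty is proving this chain, namely controlling how the individual $2$-Lannér cells glue along the orthogonal truncation facets and showing that the amalgamation obstruction vanishes simultaneously for all blocks precisely when the symmetric point lies in $\B(\GG)$; this is the step where the global combinatorics of $\GG$, and not merely the local structure of its $2$-Lannér blocks, must be confronted.
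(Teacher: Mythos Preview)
Your approach to the dimension bound $d\leqslant 9$ is essentially the paper's: decompose $\GG$ into once-truncated simplices whose underlying Coxeter groups must be $2$-Lann\'er, then invoke Maxwell's classification. Your dimension formula via the character-variety parameter count is a pleasant alternative to the paper's inductive computation, and the use of the identity $g_2(\GG)=0$ to collapse the count to $e_+(\GG)-d$ is neat. However, you only assert transversality (``infinitesimal rigidity on each block, then propagate across gluings''); the paper sidesteps this entirely by constructing an \emph{explicit} fibration $\Theta:\B(\GG)\to X(\GG)$ whose base is a product of copies of $\R$ and $\R^*$ (one per flexible prismatic circuit, flexible vertex, or cycle-type block) cut by one linear relation per $K_{2,3}$-type block, and whose fibers are the orbits of a free $\R^{k_e}$-action coming from bending along each of the $k_e$ essential prismatic circuits. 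This gives the cell decomposition directly, without any smoothness or semialgebraic argument.

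The genuine gap is in your treatment of the hyperbolic criterion. Your chain ``trivial value is realized $\Leftrightarrow$ it is the \emph{only} value realized'' is not justified, and in fact the mechanism is different from what you describe. The paper identifies the discrete invariant concretely: it is the tuple of \emph{signs} of the normalized cyclic products $R_{\mathcal{C}_\delta}$, one for each prismatic circuit $\delta$ with $W_\delta\cong\tilde{A}_{d-1}$. The point is that for such a circuit the value $R_{\mathcal{C}_\delta}=0$ is \emph{forbidden} in $\B(\GG)$ (the corresponding vertex link would be parabolic rather than loxodromic, hence not truncatable), so the sign is a locally constant invariant and the symmetric (hyperbolic) configuration $R\equiv 0$ simply does not lie in $\B(\GG)$. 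Conversely, if every prismatic circuit $\delta$ has $W_\delta$ Lann\'er (the only other possibility, by a separate argument), then no coordinate is punctured, $X(\GG)$ is connected, and the Tits simplex supplies a hyperbolic point. Thus connectedness $\Leftrightarrow$ no $\tilde{A}_{d-1}$ prismatic circuit $\Leftrightarrow$ hyperbolizable, the last equivalence being the content of the paper's geometrization theorem (Theorem~\ref{thm:geometrization}). Your proposal neither isolates the $\tilde{A}_{d-1}$ circuits as the sole source of disconnection nor explains why their absence is equivalent to hyperbolizability; without this, the equivalence cannot be closed.
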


\begin{rem}
The number $\kappa(\GG)$ of connected components of $\B(\GG)$ can be explicitly computed since the parametrization of $\B(\GG)$ is concretely constructed (see Theorem \ref{thm:number_of_components}). 
\end{rem}

\begin{rem}
In Theorem \ref{thm:geometrization}, we also give a characterization of irreducible, large, 2-perfect labeled truncation polytopes being \emph{hyperbolizable}, i.e. realized by a hyperbolic Coxeter polytope, or \emph{convex-projectivizable}, i.e. realized by a projective Coxeter polytope.
\end{rem}

\begin{rem}
It was proved by Choi--Lee \cite{MR3375519} and Greene \cite{MR3322033} that if $\GG$ is a perfect labeled truncation $d$-polytope realized by a hyperbolic Coxeter polytope $P$, then $\B(\GG)$ is smooth at $[P]$ and of dimension $b(\GG) = e_+(\GG) - d$.
\end{rem}

\begin{rem}\label{not:example}
If the labeled polytope $\GG$ is {\em not} a truncation polytope, then $\B(\GG)$ may not be a union of open cells. For example, there exist perfect labeled $4$-polytopes $\GG_1$ and $\GG_2$ such that $\B(\GG_1)$ is homeomorphic to a circle (see Choi--Lee--Marquis \cite{CLM2}) and $\B(\GG_2)$ is homeomorphic to $\{ (x,y) \in \mathbb{R}^2 \mid xy=0 \}$ (see Choi--Lee \cite{MR3375519}). In particular, $\B(\GG_2)$ is even {\em not} a manifold. 
\end{rem}

\subsection{Divisible and quasi-divisible convex domain}

Every properly convex domain $\Omega$ admits a Hilbert metric $d_{\Omega}$ so that the group  $\mathrm{Aut}(\Omega)$ of projective automorphisms preserving $\Omega$ acts on $\Omega$ by isometries for $d_{\Omega}$. Among such metric spaces $(\Omega,d_{\Omega})$, we are particularly interested in the one having the following property: there exists a discrete subgroup $\G$ of $\mathrm{Aut}(\Omega)$ such that $\Quotient{\O}{\G}$ is compact or of finite volume with respect to the Hausdorff measure induced by $d_{\Omega}$. In the case that $\Quotient{\O}{\G}$ is compact (resp. of finite volume), we call $\O$ \emph{divisible} (resp. \emph{quasi-divisible}) \emph{by $\Gamma$}. 
A natural question to ask is what kinds of (quasi-)divisible domains exist. We will give a short history of (quasi-)divisible domains.

\medskip

A properly convex domain $\O$ of $\mathbb{RP}^d$ is \emph{decomposable} if a cone of $\mathbb{R}^{d+1}$ lifting $\O$ is a non-trivial direct product of two smaller cones. So, only indecomposable convex domains are of interest to us, and all properly convex domains in this subsection are assumed to be indecomposable. Note that a strictly convex domain is always indecomposable.  

\medskip

First, there are \emph{homogeneous} (quasi-)divisible domains, i.e. the group $\Aut(\O)$ acts transitively on $\O$. All such domains except hyperbolic space are not strictly convex. They correspond to the symmetric spaces of the quasi-simple Lie groups $\mathrm{SL}_m (k)$ for $k$ the real, complex or quaternionic field or of the exceptional one $E_{6,-26}$ (see \cite{homogeneous_vin_1,koecher}).
 
\medskip

Second, the existence of \emph{inhomogeneous}, strictly convex, divisible (resp. quasi-divisible but not divisible) domains $\Omega$ in any dimension follows from the works of Koszul \cite{kos_open}, Johnson--Millson \cite{johnson_millson} and Benoist \cite{convexe_div_1} (resp. Ballas--Marquis \cite{BallasMarquis}). In these examples, the group $\Gamma$ (quasi-)dividing $\Omega$ is isomorphic to the fundamental group of a finite volume hyperbolic manifold $M$, and $(\Omega,\Gamma)$ is obtained by deforming the developing map and the holonomy of the hyperbolic structure on $M$, called \emph{bending} or \emph{bulging}.

\medskip

Third, there exist inhomogeneous, strictly convex, divisible $d$-domains $\Omega$ by $\Gamma$ such that $\Gamma$ is \emph{not} isomorphic to any lattice of $\mathrm{Isom} (\HH^d)$, by Benoist \cite{benoist_qi} for $d=4$ and by Kapovich \cite{kapo_qi} for any dimension $d\geqslant 4$. But, it is still an open question whether there exist inhomogeneous, strictly convex, \emph{quasi-divisible not divisible} domains $\Omega$ of any dimension $d \geqslant 4$ by a group $\Gamma$ non-isomorphic to a lattice of $\mathrm{Isom} (\HH^d)$. If a quasi-divisible $2$- or $3$-domain $\Omega$ by $\G$ is strictly convex, then $\G$ has to be isomorphic to a lattice of $\mathrm{Isom} (\HH^2)$ or $\mathrm{Isom} (\HH^3)$.

\medskip

Fourth, the examples of inhomogeneous, \emph{non-strictly convex}, divisible $d$-domains $\Omega$ by $\Gamma$ were found first by Benoist \cite{cd4} for $d=3, \dotsc,7$, and later by the authors \cite{CLM2} for $d=4, \dotsc,8$. It is also interesting to find such domains for any dimension $d > 8$. Note that inhomogeneous, non-strictly convex, quasi-divisible $2$-domain cannot exist by Benzécri \cite{benzecri} and the third author \cite{marquis_moduli_surf}. Except in dimension $3$ (see \cite{BDL_3d_geometrization}), all the known examples were built from Coxeter groups $\Gamma$, each of which is relatively hyperbolic with respect to a collection of virtually free abelian subgroups of rank $r_1, \dotsc, r_k \geqslant 2$ for some $k \in \mathbb{N} \smallsetminus \{0\}$. In the Benoist's examples, $r_i = d-1$ for all $i = 1, \dots, k$, but in the other examples, $r_i < d-1$.

\medskip

Finally, we consider inhomogeneous, non-strictly convex, \emph{quasi-divisible not divisible} domains $\Omega$ by $\Gamma$. It is slightly more complicated to explain them since non-trivial segments on the boundary $\partial \Omega$ may come from the ends or from the interior of the manifold (or orbifold) $\Quotient{\Omega}{\Gamma}$. To describe ends, Cooper, Long and Tillman \cite{clt_cusps,clt_koszul} and Ballas, Cooper and Leitner \cite{BallasCooperLeitner,BCL} developed a theory of generalized cusps, which can be of type $m \in \{0, 1, \dotsc, d \}$. Thanks to \cite{BallasMarquis,Ballas,bobb_gcusps}, we know that there exist inhomogeneous, non-strictly convex, quasi-divisible domains $\Omega$ by $\Gamma$ such that $\Quotient{\Omega}{\Gamma}$ has generalized cusps of type $m$, for $m = 1, \dotsc, d-2$. In those examples, the non-trivial segment in $\partial \Omega$ occurs because of the generalized cusp, and $\Quotient{\Omega}{\Gamma}$ is obtained again by bending cusped hyperbolic manifold. Note that the cusp of type $0$, which appears in hyperbolic geometry, cannot produce a non-trivial segment, and the cusp of type $\geqslant d-1$ prevents $\Quotient{\Omega}{\Gamma}$ from being finite volume (see \cite[Th.~0.6]{BallasCooperLeitner}).

\medskip

This paper exhibits examples of inhomogeneous, non-strictly convex, divisible (resp. quasi-divisible not divisible) domains $\Omega$ of dimension $d= 4, 5, 6, 7$ (resp. $d = 4, 8$) by $\Gamma$. If such domain is not divisible, then $\Quotient{\Omega}{\Gamma}$ has only cusps of type $0$ and $\Gamma$ is relatively hyperbolic with respect to a family of virtually $\Z^{d-1}$ subgroups. 

\begin{theooo}\label{thm:existence_quasi-divisible}
In dimension $d = 4$ and $8$, there exist indecomposable, inhomogeneous, non-strictly convex, quasi-divisible $d$-domains $\Omega$ by $\Gamma$ such that $\Quotient{\Omega}{\Gamma}$ has only generalized cusps of type $0$.
\end{theooo}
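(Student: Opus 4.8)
The plan is to exhibit, for each of $d=4$ and $d=8$, one explicit irreducible, large, $2$-perfect labeled truncation $d$-polytope $\GG$ with $\B(\GG)\neq\emptyset$ and $b(\GG)=e_+(\GG)-d\geqslant 1$, and then to read off, from a non-hyperbolic point of $\B(\GG)$, a convex domain with all the stated properties. The guiding picture is that such a point is a deformation of a finite-volume cusped hyperbolic Coxeter orbifold which keeps the cusps hyperbolic, hence of type $0$, while destroying strict convexity.

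First I would assemble the combinatorial and angular data. Using the truncation recipe of Subsection~\ref{sebsec:truncation}, I select from the list in Appendix~\ref{classi_lorent} a quasi-Lannér Coxeter $d$-simplex possessing at least one \emph{ideal} vertex, i.e. a vertex whose link is an affine (Euclidean) Coxeter group of rank $d-1$, together with at least one \emph{hyperideal} vertex; for $d=8$ this requires the ideal-vertex link to be one of $\widetilde{A}_7,\widetilde{D}_7,\widetilde{E}_7$, which is precisely where the upper dimension constraint makes itself felt. I truncate the hyperideal vertices and glue finitely many copies of the resulting once-truncated simplices along their truncation facets, arranging the gluing so that the ideal vertices survive and that the number $e_+(\GG)$ of ridges labelled $\neq\nicefrac{\pi}{2}$ exceeds $d$. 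The result is a labeled truncation polytope $\GG$: irreducibility and largeness are read off its Coxeter diagram, and $2$-perfectness holds because by construction the only faces with infinite reflection group are the surviving ideal vertices.

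I then invoke the Main Theorem. The underlying hyperbolic simplices, once truncated and glued, realize $\GG$ by a genuine hyperbolic Coxeter polytope, so $\B(\GG)\neq\emptyset$; by Theorem~\ref{MainTheorem} the space $\B(\GG)$ is a single open cell of dimension $b(\GG)=e_+(\GG)-d\geqslant 1$. At the hyperbolic point the domain is $\HH^d$ and each surviving ideal vertex yields a standard hyperbolic cusp, a generalized cusp of type $0$ with cusp group virtually $\Z^{d-1}$. Moving to a nearby non-hyperbolic point deforms the structure while fixing the labels, hence the affine type of each ideal-vertex link; the cusps therefore stay parabolic of type $0$, so $\Quotient{\Omega}{\Gamma}$ retains only type-$0$ cusps and remains of finite volume but noncompact, whence $\Omega$ is quasi-divisible but not divisible. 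Geometrically the deformation bends the structure along the totally geodesic hypersurfaces carried by the glued truncation facets, forcing nontrivial segments into $\partial\Omega$, so $\Omega$ is non-strictly convex. Indecomposability follows from the irreducibility of $\GG$ through Vinberg's Theorem~\ref{theo_vinberg}, and inhomogeneity follows because the only homogeneous indecomposable properly convex domains are $\HH^d$ and the symmetric cones: the former is strictly convex and the latter is divisible without type-$0$ cusps, so our non-strictly convex, cusped $\Omega$ can be neither.

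The main obstacle is twofold. The delicate combinatorial point is to produce valid building blocks in dimension $d=8$: one must locate in the quasi-Lannér list a simplex carrying a rank-$7$ affine vertex link \emph{and} truncatable vertices whose truncation facets match under gluing, a genuinely tight constraint so close to the top dimension $9$. The delicate geometric point is certifying simultaneously that the deformed structure is non-strictly convex and that every cusp remains of type $0$; I expect to settle this not by a soft argument but by an explicit spectral analysis of the holonomy, exhibiting a wall-crossing element whose eigenvalues force a boundary segment while checking that each cusp holonomy retains the unipotent type-$0$ normal form.
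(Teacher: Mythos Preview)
Your approach has a genuine gap that makes it unworkable in dimension $8$, and it misidentifies the mechanism for non-strict convexity.

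First, the combinatorial step fails: in dimension $8$ every $2$-Lann\'er Coxeter group of rank $9$ is quasi-Lann\'er (see Table~\ref{tab:nb_2lanner} and Table~\ref{Table_dim8}), so \emph{no} vertex of any such simplex is hyperideal (Lann\'er). There is nothing to truncate in your sense, and hence no matching simplicial facets along which to glue. Your proposed building blocks simply do not exist for $d=8$.

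Second, even where blocks exist, your mechanism for non-strict convexity is not the right one. The paper does \emph{not} obtain boundary segments by bending along Lann\'er truncation walls of a hyperbolizable $\GG$; when all prismatic circuits are Lann\'er, Theorem~\ref{thm:geometrization} says $\GG$ is hyperbolizable and $\B(\GG)$ is connected, but nothing in that situation forces failure of relative hyperbolicity with respect to the cusp groups. Instead, the paper's examples $\Ss_{d\tau}^{\dagger}$ and $\Ss_{d\tau}^{\dagger}\sharp_{\phi_j}\Ss_{d\tau}^{\dagger}$ are \emph{non-hyperbolizable}: they are built by truncating the unique $\tilde A_{d-1}$ vertex of $\Ss_{d\tau}$, so they carry an $\tilde A_{d-1}$ \emph{prismatic circuit}. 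That circuit gives a virtually $\Z^{d-2}$ subgroup sitting inside the polytope (not at a cusp), which by Lemma~\ref{lem:strictlyconvex} obstructs relative hyperbolicity with respect to the parabolic vertex groups, and then Theorem~\ref{theo_action2} yields non-strict convexity. Quasi-perfectness of these prisms (all remaining vertices are spherical or affine, none Lann\'er) plus Theorem~\ref{theo_action1} gives finite volume with only type-$0$ cusps. Note also that your claim ``deforming keeps the cusps parabolic'' is false for $\tilde A_{d-1}$ vertex links: by Corollary~\ref{cor:truncatable} such a vertex becomes loxodromic (hence truncatable, not a cusp) precisely when its cyclic invariant is nonzero, which is the generic behavior under deformation.
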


\begin{rem}
Such $d$-domains as in Theorem \ref{thm:existence_quasi-divisible} also exist in dimension $3$, $5$, $6$ and $7$ (see Remark \ref{rem:other_quasi-divisible}).
\end{rem}

\subsection{Geometrization}

From the point of view of geometrization “à la Thurston”, this paper provides the characterization of hyperbolization and convex-projectivization for Coxeter truncation orbifolds. The precise statement is somewhat technical, but nevertheless, we compare the surfaces and the truncation polytopes briefly for helping the reader to understand Theorem \ref{thm:geometrization}.

\medskip

To study the geometry and the topology of surfaces $S$ with negative Euler characteristic, one considers a finite collection of disjoint simple closed curves cutting $S$ into pairs of pants. Similarly, for irreducible, large, 2-perfect, labeled truncation $d$-polytopes $\GG$, we can find a finite collection of disjoint \emph{prismatic circuits} which decompose $\GG$ into irreducible once-truncated $d$-simplices $\GG_i$ (see Section \ref{section:gluing_splitting}). If a labeled polytope is considered as a Coxeter orbifold, then prismatic circuits may be identified with incompressible suborbifolds. 

\medskip

Each pair of pants admits hyperbolic structures, but it is not true that each once-truncated $d$-simplex $\GG_i$ is hyperbolizable or convex-projectivizable. We need an extra condition  on the prismatic circuits of $\GG_i$. To each prismatic circuit $\delta$ of $\GG_i$ is associated a Coxeter group $W_{\delta}$. Then (i) $\GG_i$ is hyperbolizable if and only if the Coxeter group $W_\delta$ is Lannér for each $\delta$, and (ii) $\GG_i$ is convex-projectivizable if and only if $W_\delta$ is either Lannér or $\tilde{A}_{d-1}$ for each $\delta$ (see Appendix \ref{classi_diagram} for the spherical and affine Coxeter groups). After once-truncated simplices $\GG_i$ are geometrized, they may be glued together whenever the geometry at the prismatic circuits matches up, analogous to gluing pairs of pants. This leads to a geometrization of $\GG$.

\begin{theooo}\label{thm:geometrization}
Let $\GG$ be an irreducible, large, 2-perfect, labeled truncation polytope of dimension $d \geqslant 4$, and let $\mathcal{P}$ be the set of prismatic circuits of $\GG$. Then: 
\begin{itemize}
\item $\GG$ is hyperbolizable if and only if $W_\delta$ is Lannér for each $\delta \in \mathcal{P}$;
\item $\GG$ is convex-projectivizable if and only if $W_\delta$ is Lannér or $\tilde{A}_{d-1}$ for each $\delta \in \mathcal{P}$.
\end{itemize}
In particular, in the case that $\GG$ is perfect, it is hyperbolizable if and only if it is convex-projectivizable and $W_\GG$ is word-hyperbolic.
\end{theooo}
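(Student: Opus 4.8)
The plan is to reduce the global statement to a local one about the pieces of the canonical decomposition, and then reassemble. By the splitting and gluing theory of Section \ref{section:gluing_splitting}, the set $\P$ of prismatic circuits cuts $\GG$ into irreducible once-truncated $d$-simplices $\GG_1, \dotsc, \GG_k$, and each $\delta \in \P$ is a prismatic circuit of exactly one or two of the $\GG_i$. Granting the building-block statements (i) and (ii) for once-truncated simplices, the proof amounts to showing that hyperbolizability (resp. convex-projectivizability) is inherited under gluing precisely when the geometries at a common prismatic circuit agree, and that this matching is automatic once the two sides share the same label data, \ie the same $W_\delta$.

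For the building blocks I would argue geometrically. A once-truncated simplex $\GG_i$ is $2$-perfect, so every edge must meet $\Omega$; by Theorem \ref{theo_vinberg}.(5) this forces its diagram to be that of a $2$-Lann\'er simplex, whence the restriction $d \leqslant 9$ and the finite list of Appendix \ref{classi_lorent}. The vertices to be truncated are exactly those not meeting $\Omega$, and their type is read off $W_\delta$: a vertex is ordinary, ideal, or hyperideal according as $W_\delta$ is spherical, equal to $\tilde{A}_{d-1}$, or Lann\'er. In the hyperbolic model a vertex is truncatable exactly when it is hyperideal, and then its polar hyperplane cuts orthogonally across all incident edges (Vinberg \cite[Prop.~4.4]{MR783604}); this gives (i). In the projective setting Theorem \ref{theo_vinberg} provides more room: an ideal vertex with $W_\delta = \tilde{A}_{d-1}$ can be opened up by deforming the reflection data so that it becomes truncatable while keeping $\Omega$ properly convex and $\GG_i$ $2$-perfect, whereas any $W_\delta$ that is neither Lann\'er nor $\tilde{A}_{d-1}$ would drive a positive-dimensional face out of $\Omega$ or destroy convexity; this gives (ii).

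Next I would assemble. Given geometrized pieces, truncate each hyperideal (resp. opened-up) vertex along its polar hyperplane and glue two pieces along a shared prismatic circuit by identifying the two truncating facets. For a Lann\'er circuit the truncating facet is the compact $(d-1)$-dimensional Coxeter polytope determined by $W_\delta$ alone, so the two sides produce isometric facets, the gluing is canonical, and Poincar\'e's polyhedron theorem yields discreteness and the tiling, proving $\GG$ hyperbolizable. For the projective (possibly $\tilde{A}_{d-1}$) case one glues the projective Coxeter simplices and invokes Theorem \ref{theo_vinberg} to see that the union $\Omega$ is properly convex with $\Gamma$ acting properly discontinuously; the induced structure on an $\tilde{A}_{d-1}$ truncating facet is affine and carries free parameters, which I would fix compatibly on the two sides, exactly the situation producing the generalized cusps of type $0$ in Theorem \ref{thm:existence_quasi-divisible}. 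The converse directions are then immediate: a hyperbolic (resp. convex projective) realization of $\GG$ restricts near each $\delta \in \P$ to a realization of the corresponding once-truncated simplex, so by (i) (resp. (ii)) each $W_\delta$ must be Lann\'er (resp. Lann\'er or $\tilde{A}_{d-1}$).

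Finally, the perfect case follows by combining the two equivalences with a hyperbolicity criterion. By the first two bullets it suffices to prove that, for perfect $\GG$, the group $W_\GG$ is word-hyperbolic if and only if no $W_\delta$ equals $\tilde{A}_{d-1}$. A prismatic circuit consists of $d$ facets arranged in a cycle, so its affine type can only be $\tilde{A}_{d-1}$, the unique irreducible affine diagram that is a cycle; by Moussong's criterion such a subgroup obstructs word-hyperbolicity. For the converse I would show, using irreducibility, largeness and the truncation structure, that every affine special subgroup of rank $\geqslant 3$ and every pair of commuting infinite special subgroups of $W_\GG$ arises from a prismatic $\tilde{A}_{d-1}$ circuit, so that their absence makes $W_\GG$ word-hyperbolic; the chain then reads: $\GG$ hyperbolizable $\Leftrightarrow$ every $W_\delta$ is Lann\'er $\Leftrightarrow$ $\GG$ is convex-projectivizable and no $W_\delta$ is $\tilde{A}_{d-1}$ $\Leftrightarrow$ $\GG$ is convex-projectivizable and $W_\GG$ is word-hyperbolic. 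The hardest part, I expect, will be the projective opening-up of ideal vertices in (ii) together with the compatible matching of the affine data along $\tilde{A}_{d-1}$ circuits, since one must deform within Vinberg's framework so that proper convexity and $2$-perfectness survive globally after gluing and not merely on each piece, and simultaneously secure the Moussong-type claim that prismatic circuits account for all the affine subgroups of $W_\GG$.
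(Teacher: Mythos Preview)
Your overall strategy—decompose along prismatic circuits into once-truncated simplices, geometrize each piece, then glue—matches the paper's. The paper's implementation is more algebraic than your geometric sketch: non-emptiness of the deformation space of each once-truncated simplex is Proposition~\ref{prop:deformation_block}, which rests on the explicit Cartan-matrix parametrization of Theorem~\ref{t:moduli_simplex} together with Corollary~\ref{cor:truncatable} (the normalized cyclic product $R_{\C_v}$ controls precisely whether an $\tilde{A}_{d-1}$ vertex is truncatable, which is your ``opening up''). Gluing is Lemma~\ref{lem:gluing}, which exhibits a free $\mathbb{R}$-action on the fibres of the splitting map $\mathrm{Cut}_\delta$ and shows that this map surjects onto the subspace where the cyclic products on the two sides of the common circuit agree; this is the exact content behind your ``matching of parameters''. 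The paper also splits only along \emph{essential} prismatic circuits (Lemma~\ref{lemma:useless}), a distinction your outline does not make. The ``only if'' direction is Lemma~\ref{lemma:property:prismatic}, essentially as you describe.

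For the perfect case you are working harder than necessary. You propose to prove directly that $W_\GG$ is word-hyperbolic whenever no prismatic circuit is $\tilde{A}_{d-1}$, by verifying Moussong's criterion and arguing that every obstructing affine or commuting-product subgroup of $W_\GG$ comes from a prismatic circuit. The paper sidesteps this entirely: it uses only the well-known implication ``$\GG$ hyperbolizable $\Rightarrow$ $W_\GG$ word-hyperbolic'' (cocompact action on $\mathbb{H}^d$), together with the immediate observation that a word-hyperbolic group cannot contain $\tilde{A}_{d-1}$, which is virtually $\mathbb{Z}^{d-1}$ with $d-1\geqslant 3$. The chain then reads: hyperbolizable $\Rightarrow$ convex-projectivizable and word-hyperbolic; conversely, convex-projectivizable and word-hyperbolic $\Rightarrow$ every $W_\delta$ is Lann\'er or $\tilde{A}_{d-1}$ (second bullet) $\Rightarrow$ every $W_\delta$ is Lann\'er (no $\mathbb{Z}^{d-1}$) $\Rightarrow$ hyperbolizable (first bullet). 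So the Moussong-type claim you flag as the hardest step is simply not needed.
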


\subsection*{Organization of the paper}

Section \ref{section:preliminary} recalls the background material including Vinberg’s theory of discrete reflection groups. Section \ref{section:moduli_of_simplex} discusses the deformation spaces of Coxeter simplices realizing an irreducible, large, $2$-perfect labeled simplex of dimension $d \geqslant 4$. In Section \ref{section:trucation_stacking}, we introduce two important operations on polytope, which are dual to each other: truncation and stacking. Section \ref{section:gluing_splitting} explains how to glue two Coxeter polytopes and how to do the reverse operation: to split one Coxeter polytope into two, and gives the proofs of Theorems \ref{thm:geometrization} and \ref{MainTheorem}. In Section \ref{section:number_of_components}, we count connected components of deformation space. Section \ref{section:higherDim} describes the deformation space of each individual labeled truncation polytopes of dimension $d \geqslant 6$ and Section \ref{section:Dim5} shows some features of truncation $5$-polytopes. In Section \ref{section:geometric_interpretation}, we explain some geometric properties of discrete reflection groups constructed in this paper, and give the proof of Theorem \ref{thm:existence_quasi-divisible}. 

\medskip

Finally, in five appendixes, we collect various Coxeter diagrams: the irreducible spherical or affine Coxeter diagrams (in Appendix \ref{classi_diagram}), the Lannér Coxeter diagrams of rank $4$ (in Appendix \ref{appendix:lanner}), the 2-Lannér Coxeter diagram of rank $\geqslant 5$ with colored nodes to encode their geometric properties (in Appendix \ref{classi_lorent}), the diagrams of $2$-perfect Coxeter prisms of dimension $d=6,7,8$ (in Appendix \ref{appendix:678prism}), and the diagrams of exceptional Coxeter $5$-prisms (in Appendix \ref{appendix:5prism}). 

\subsection*{Acknowledgements}

We are thankful for helpful conversations with Anna Wienhard and Maxime Wolff. This work benefited from the ICERM 2013 semester workshop on Exotic Geometric Structures, attended by the all three authors. We thank Balthazar Fléchelles, Stefano Riolo and the referee for carefully reading this paper and suggesting several improvements.

S. Choi was supported by the the National Research Foundation of Korea(NRF) grant funded by the Korea government(MSIP) under Grant number 2019R1A2C108454412. G.-S. Lee was supported by the DFG research grant “Higher Teichm\"{u}ller Theory”, by the European Research Council under ERC-Consolidator Grant 614733 and by the National Research Foundation of Korea(NRF) grant funded by the Korea government(MSIT) (No. 2020R1C1C1A01013667), and he acknowledges support from U.S. National Science Foundation grants DMS 1107452, 1107263, 1107367 “RNMS: GEometric structures And Representation varieties” (the GEAR Network). L. Marquis acknowledges support by the Centre Henri Lebesgue (ANR-11-LABX-0020 LEBESGUE).

\section{Preliminary}\label{section:preliminary}

In this section, we recall background material including Vinberg's results \cite{MR0302779}, which are essentially used in this paper (see also Benoist \cite{MR2655311}).

\subsection{Coxeter groups}

A \emph{Coxeter matrix} $M$ on a finite set $S$ is a symmetric $S \times S$ matrix $M=(M_{st})_{s,t \in S}$ with entries $M_{st} \in \{ 1, 2, \dotsc, \infty \} $ such that the diagonal entries $M_{ss}=1$ and the others $M_{st} \neq 1$. To a Coxeter matrix $M$ is associated a \emph{Coxeter group} $W_S$: the group presented by the set of generators $S$ and the relations $(st)^{M_{st}}=1$ for each $(s,t) \in S \times S$ with $M_{st} \neq \infty$. The cardinality $\# S$ of $S$ is called the \emph{rank} of the Coxeter group $W_S$. 

\medskip

All the information of a Coxeter group $W_S$ is encoded in a labeled graph $\mathcal{D}_{W}$, which we call the \emph{Coxeter diagram} of $W_S$: (i) the set of nodes\footnote{We prefer using the word “{\em node}” rather than “{\em vertex}” for the Coxeter diagram in order to distinguish a node of a diagram from a vertex of a polytope.} of $\mathcal{D}_W$ is $S$, (ii) two nodes $s,t \in S$ are connected by an edge $\overline{st}$ if and only if $M_{st} \in \{ 3, 4, \dotsc, \infty \} $, (iii) the label of the edge $\overline{st}$ is $M_{st}$. It is customary to omit the label of the edge $\overline{st}$ if $M_{st} = 3$.

\medskip

For any subset $S'$ of $S$, the $S' \times S'$ submatrix of $M$ is a Coxeter matrix $M'$ on $S'$. Since the natural homomorphism $W_{S'} \rightarrow W_S$ is injective, we may identify $W_{S'}$ with the subgroup of $W_S$ generated by $S'$. Such a subgroup is called a \emph{standard subgroup} of $W_S$.

\medskip

The connected components of the Coxeter diagram $\mathcal{D}_W$ are Coxeter diagrams of the form $\mathcal{D}_{W_{S_i}}$, where the $S_i$ form a partition of $S$. The subgroups $W_{S_i}$ are called the \emph{components} of $W_S$. A Coxeter group $W_S$ is \emph{spherical} (resp. \emph{affine}) if each component of $W_S$ is finite (resp. infinite and virtually abelian), and it is \emph{irreducible} if $\mathcal{D}_W$ is connected. Note that every irreducible Coxeter group $W_S$ is spherical, affine or \emph{large}, \ie $W_S$ has a finite index subgroup with a non-abelian free quotient (see Vinberg--Margulis \cite{MR1748082}). We often use the well-known classification of the irreducible spherical or irreducible affine Coxeter groups (see Appendix \ref{classi_diagram}).

\subsection{Coxeter polytopes}\label{subsection:CoxeterPolytope}

Let $V = \mathbb{R}^{d+1}$ and let $\mathbb{S}(V)$ be the projective sphere, \ie the space of half-lines in $V$ emanating from $0$. The automorphism group of $\mathbb{S}(V)$ is the group $\mathrm{SL}^{\pm}(V)$ of matrices of determinant $\pm 1$. We use the notation $\S^d$ to indicate the dimension of $\S(V)$. For example, the projective $0$-sphere $\S^0$ consists of two points. 

\medskip

A \emph{projective reflection} $\sigma$ is an element of $\mathrm{SL}^{\pm}(V)$ of order 2 which fixes a projective hyperplane of $\mathbb{S}(V)$ pointwise. In other words, there exists a vector $b \in V$ and a linear functional $\alpha \in V^{*}$, the dual vector space of $V$, such that 
$$ \sigma = \mathrm{\mathrm{Id}} -\alpha\otimes b \textrm{ with } \alpha(b)=2, \quad \textrm{i.e.} \quad \sigma(v) = v - \alpha(v)b \quad \forall v \in V.$$ 
We denote by $\hat{\mathbb{S}}$ the natural projection of $V \smallsetminus \{ 0 \}$ to $\mathbb{S}(V)$, and let $\mathbb{S}(W) := \hat{\mathbb{S}}(W \smallsetminus \{ 0 \})$ for any subset $W$ of $V$. The \emph{support} and the \emph{pole} of the reflection $\sigma$ are the hyperplane $\S(\ker(\alpha))$ and the point $[b] := \S(b)$ of $\S(V)$ respectively.

\medskip

The complement of a projective hyperplane in $\mathbb{S}(V)$ consists of two connected components, each of which we call an \emph{affine chart} of $\mathbb{S}(V)$. A subset $\mathcal{C}$ of $\S(V)$ is \emph{convex} if there exists a convex cone\footnote{By a \emph{cone} we mean a subset of $V$ which is invariant under multiplication by positive scalars.} $U$ of $V$ such that $\mathcal{C} = \mathbb{S}(U)$, \emph{properly convex} if it is convex and its closure lies in some affine chart, and \emph{strictly convex} if in addition its boundary does not contain any nontrivial projective line segment. A \emph{projective polytope} is a properly convex subset $P$ of $\S(V)$ with nonempty interior such that 
$$P = \bigcap_{i=1}^{n}  \S(\{ x \in V \,\,|\,\, \alpha_i(x) \leqslant 0 \})$$
for some nonzero $\alpha_i \in V^*$.
Recall that a face of codimension $1$ (resp. $2$) in $P$ is a \emph{facet} (resp. \emph{ridge}) of $P$. Two facets $s, t$ of $P$ are {\em adjacent} if the intersection $s \cap t$ is a ridge of $P$. We always assume that $P$ has $n$ facets, \ie in order to define $P$, we need all the $n$ linear functionals $(\alpha_i)_{i=1}^{n}$.

\begin{de}
A \emph{Coxeter polytope} is a pair $(P, (\sigma_s)_{s \in S})$ of a projective polytope $P$ with the set $S$ of its facets and the reflections $(\sigma_s = \mathrm{Id} - \alpha_s \otimes b_s)_{s \in S}$ with $\alpha_s(b_s) = 2$ such that:
\begin{itemize}
\item for each facet $s \in S$, the support of $\sigma_s$ is the supporting hyperplane of $s$;
\item for each pair of facets $s \neq t$ of $P$,
  \begin{enumerate} 
  \item $ \alpha_s(b_t)$ and $\alpha_t(b_s)$ are both zero or both negative,

  \item $ \alpha_s(b_t) \alpha_t(b_s) \geqslant 4$\, or \,$\alpha_s(b_t) \alpha_t(b_s) = 4 \cos^2 ( \nicefrac{\pi}{m_{st}})$ for some $m_{st} \in \mathbb{N} \smallsetminus \{ 0,1\}$.
  \end{enumerate}
\end{itemize}
\end{de}

We often denote the Coxeter polytope simply by $P$. For every pair of distinct facets $s, t$ of $P$, the composite $\sigma_s \sigma_t$ acts trivially on the subspace $U = \ker(\alpha_s) \cap \ker(\alpha_t)$ of codimension 2, hence $\sigma_s \sigma_t$ induces an element of $\mathrm{SL}(V/U)$, which is conjugate to the following matrix:
\begin{enumerate}
\item[(\texttt{N})]
$\left(\begin{matrix}
\lambda  & 0 \\ 
0            & \lambda^{-1}
\end{matrix}\right)$ for some $\lambda > 0$ 
 \;\; if $\alpha_s(b_t) \alpha_t(b_s) > 4 $;

\item[(\texttt{Z})]
$\left(\begin{matrix}
1  & 1 \\ 
0  & 1
\end{matrix}\right)$
\;\; if $\alpha_s(b_t) \alpha_t(b_s) = 4$;

 \item[(\texttt{P})]
$\left(\begin{matrix}
\cos( \frac{2\pi}{m_{st}} ) & -\sin( \frac{2\pi}{m_{st}} ) \\ 
\sin( \frac{2\pi}{m_{st}} )  & \phantom{-} \cos( \frac{2\pi}{m_{st}} )
\end{matrix}\right)$ \;\; if $\alpha_s(b_t) \alpha_t(b_s) = 4 \cos^2 ( \frac{\pi}{m_{st}}) $.
\end{enumerate}

\medskip

In the case (\texttt{P}) the two facets $s, t$ are adjacent by Vinberg \cite[Th. 7]{MR0302779}. For each pair of adjacent facets $s, t$ of $P$, the \emph{dihedral angle} of the ridge $s \cap t$ is said to be $\nicefrac{\pi}{m_{st}}$ in the case (\texttt{P}) and to be $0$ in the cases (\texttt{N}) and (\texttt{Z}). To a Coxeter polytope $P$ is associated a Coxeter matrix $M$ on $S$: (i) the set of facets of $P$ is $S$; (ii) for each pair of distinct facets $s, t$ of $P$, we set $M_{st}=m_{st}$ in the case (\texttt{P}) and $M_{st} =\infty$ otherwise. We denote by $W_P$ the Coxeter group associated to this Coxeter matrix $M$.

\subsection{Tits--Vinberg's Theorem}

If $P$ is a Coxeter polytope and $f$ is a face of $P$, then we let $S_f= \{ s \in S \mid f \subset s \}$.

\begin{theorem}[Tits {\cite[Chap. V]{MR0240238}} for the Tits simplex and Vinberg {\cite[Th. 2]{MR0302779}}]\label{theo_vinberg}
Let $P$  be a Coxeter polytope of $\S(V)$ with Coxeter group $W_P$, and let $\G_P$ be the group generated by the projective reflections $(\sigma_s)_{s \in S}$. Then the following hold:
\begin{enumerate}
\item the homomorphism $\sigma:W_P \rightarrow \Gamma_P \subset \mathrm{SL}^{\pm}(V)$ defined by $\sigma(s) =
\sigma_s$ is an isomorphism;

\item the group $\Gamma_P$ is a discrete subgroup of $\mathrm{SL}^{\pm}(V)$;

\item the union of the $\Gamma_P$-translates of $P$ is a convex subset $\C_P$ of $\S(V)$;

\item if $\Omega_P$ is the interior of $\C_P$, then $\Gamma_P$ acts properly discontinuously on $\Omega_P$;

\item an open face $f$ of $P$ lies in $\O_P$ if and only if $W_{S_f}$ is spherical.
\end{enumerate}
\end{theorem}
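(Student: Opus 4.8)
The plan is to follow Vinberg's strategy and prove all five assertions in parallel through a single induction on the dimension $d$, the geometric heart being that $P$ is a fundamental domain whose $\Gamma_P$-translates tile a convex region without overlapping interiors. The organizing datum is the Cartan matrix $A = (\alpha_s(b_t))_{s,t \in S}$; the sign condition (1) and the angle condition (2) are exactly what is needed to control, wall by wall, on which side of each hyperplane $\S(\ker\alpha_s)$ the reflection $\sigma_s$ folds space. First I would analyze the local model transverse to a face $f$ of $P$: the reflections $\sigma_s$ with $s \in S_f$ fix $\bigcap_{s \in S_f}\S(\ker\alpha_s)$ pointwise and induce on the transverse quotient a linear reflection group attached to the principal submatrix $(\alpha_s(b_t))_{s,t \in S_f}$. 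The base case is rank $\leqslant 2$, a single ridge $s\cap t$, where the already-recorded normal forms (\texttt{N}), (\texttt{Z}), (\texttt{P}) of $\sigma_s\sigma_t$ make the local statement a direct computation; the inductive step studies the transverse slice, which is a Coxeter configuration in one lower dimension.

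Next I would upgrade this to a local tiling statement: near any point $x$ of $\C_P$, with carrier face $f$, only finitely many translates $\gamma P$ meet a small neighborhood, their interiors are pairwise disjoint, and they fit together around $x$ according to the transverse group $W_{S_f}$. This already delivers assertion (5): the chambers surrounding the open face $f$ close up into a full neighborhood of $x$ in $\S(V)$, so that $f \subset \Omega_P$, precisely when $W_{S_f}$ is finite, i.e.\ spherical; otherwise the surrounding chambers sweep out only a proper convex cone and $f$ lies on $\partial\C_P$. Using this local control I would then prove injectivity of $\sigma$, hence assertion (1). Given a reduced word $w = s_1\cdots s_k$ in $W_P$, I would show the geometric regions $P_0 = P,\ P_1 = \sigma_{s_1}P,\ \dots,\ P_k = \sigma_{s_1}\cdots\sigma_{s_k}P$ are pairwise distinct, each step crossing a genuinely new wall (this is where the local analysis and the sign conditions are used, via the exchange/deletion condition realized on the tiling). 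Since a nontrivial reduced word moves $P$ to a region $\neq P$, its image under $\sigma$ cannot be $\mathrm{Id}$, so $\ker\sigma$ is trivial; surjectivity onto $\Gamma_P$ holds by definition.

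I would then globalize to convexity, assertion (3). The claim is that $\C_P = \bigcup_{\gamma\in\Gamma_P}\gamma P$ is convex because no translated wall $\gamma\,\S(\ker\alpha_s)$ cuts transversally through the interior of $\C_P$. The argument is inductive along galleries: any segment joining two chambers is subdivided into sub-segments each crossing a single wall, and one checks, using the inductive hypothesis on the lower-dimensional transverse slices and the positivity in condition (1), that the segment remains inside the union and never forces a wall to re-enter the interior. Once $\C_P$ is convex and the tiling is locally finite, proper discontinuity of $\Gamma_P$ on $\Omega_P$ (assertion (4)) is immediate, since a point of $\Omega_P$ has a neighborhood meeting only finitely many $\gamma P$; and discreteness of $\Gamma_P$ in $\mathrm{SL}^{\pm}(V)$ (assertion (2)) follows, because any sequence of distinct group elements converging to a fixed element would push infinitely many chambers into a compact part of $\Omega_P$, contradicting local finiteness.

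The hardest part will be the third step: establishing global convexity of $\C_P$ simultaneously with the non-overlapping, fundamental-domain property of the tiling. Convexity is a global statement, whereas everything I directly control is transverse and local, so the main work is to propagate the local ``chambers fit around a face'' picture consistently along arbitrary galleries and to rule out that some translate of a wall folds back into the interior of $\C_P$ — the classical source of subtlety in Vinberg's argument. This is exactly where the interplay between the induction on dimension, the combinatorics of reduced words from step two, and the sign constraints (1) on the Cartan coefficients must be orchestrated carefully; getting the inductive hypothesis strong enough to feed back into the convexity argument is the delicate point.
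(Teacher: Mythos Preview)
The paper does not give a proof of this theorem; it is quoted as a foundational result with references to Bourbaki and Vinberg, and the subsequent sections use it as a black box. There is therefore no ``paper's own proof'' to compare against. Your outline is a faithful sketch of Vinberg's original argument in \cite{MR0302779}: the local analysis via the rank-$2$ models (\texttt{N}), (\texttt{Z}), (\texttt{P}), the gallery argument for injectivity of $\sigma$, the inductive proof of convexity of $\C_P$, and the derivation of discreteness and proper discontinuity from local finiteness of the tiling are all the right ingredients, and you correctly identify global convexity as the delicate point. If you were to flesh this out you would essentially be reproducing Vinberg's paper, which is the appropriate source to cite here.
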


As a consequence of Theorem \ref{theo_vinberg}, the following are equivalent: 
\begin{enumerate}
\item $\C_{P}$ is open in $\S(V)$;
\item $W_{S_v}$ is spherical for each vertex $v$ of $P$;
\item the action of $\G_{P}$ on $\O_{P}$ is cocompact.
\end{enumerate} 
Following Vinberg, we call such $P$ \emph{perfect} (see \cite[Def. 8]{MR0302779}). 

\subsection{Deformation space of labeled polytope}\label{subsection:Moduli}

The \emph{face poset} $\mathcal{F}(P)$ of a projective polytope $P$ is the poset of all the faces of $P$ partially ordered by inclusion. Two polytopes $P$ and $P'$ are \emph{combinatorially equivalent} if there exists a bijection $\phi$ between $\mathcal{F}(P)$ and $\mathcal{F}(P')$  such that $\phi$ preserves the inclusion relation, \ie for every $f_1, f_2 \in \mathcal{F}(P)$, $f_1 \subset f_2$ $\Leftrightarrow$ $\phi(f_1) \subset \phi(f_2)$. We call $\phi$ a \emph{poset isomorphism}. A \emph{combinatorial polytope} is a combinatorial equivalence class of polytopes.
A \emph{labeled polytope} is a pair of a combinatorial polytope $\GG$ and a \emph{ridge labeling} on $\GG$, which is a function of the set of ridges of $\GG$ to $\{ \nicefrac{\pi}{m} \mid m = 2,3, \dotsc, \infty \}$.

\medskip

Let $\GG$ be a labeled $d$-polytope. A {\em Coxeter polytope realizing $\GG$} is a pair $(P,\phi)$ of a Coxeter $d$-polytope $P$ of $\S^d$ and a poset isomorphism $\phi$ between $\mathcal{F}(\GG)$ and $\mathcal{F}(P)$ such that the label of each ridge $r$ of $\GG$ is the dihedral angle of the ridge $\phi(r)$ of $P$. Two Coxeter polytopes $(P,\phi : \mathcal{F}(\GG) \rightarrow \mathcal{F}(P))$ and $(P',\phi' : \mathcal{F}(\GG) \rightarrow \mathcal{F}(P'))$ realizing $\GG$ are {\em isomorphic} if there exists a projective automorphism $\psi$ of $\S^d$ such that $\psi(P) = P'$ and $\hat{\psi} \circ \phi = \phi'$, where $\hat{\psi}$ is the poset isomorphism between $\mathcal{F}(P)$ and $\mathcal{F}(P')$ induced by $\psi$.

\begin{de}\label{def:DeformationSpace}
The \emph{deformation space $\B(\GG)$ of a labeled $d$-polytope $\GG$} is the space of isomorphism classes of projective Coxeter $d$-polytopes realizing $\GG$.
\end{de}

For convenience in notation, we often delete the poset isomorphism $\phi$ in the Coxeter polytope $(P,\phi : \mathcal{F}(\GG) \rightarrow \mathcal{F}(P))$ realizing $\GG$, and rely on the context to make clear which of these poset isomorphism is intended. And, we denote simply by $[P]$ the isomorphism class of a Coxeter polytope $P$ realizing $\GG$.

\begin{rem}\label{rem:HyperbolicStructure}
In the same way as Definition \ref{def:DeformationSpace}, we may introduce the space $\mathrm{Hyp}(\GG)$ of isomorphism classes of hyperbolic Coxeter polytopes realizing $\GG$. Here, two hyperbolic Coxeter polytopes $(P,\phi : \mathcal{F}(\GG) \rightarrow \mathcal{F}(P))$ and $(P',\phi' : \mathcal{F}(\GG) \rightarrow \mathcal{F}(P'))$ realizing $\GG$ are in the same isomorphism class if there exists an isometry $\psi$ between $P$ and $P'$ such that $\hat{\psi} \circ \phi = \phi'$, where $\hat{\psi}$ is the poset isomorphism induced by $\psi$. 
\end{rem}

\begin{rem}
A labeled polytope $\GG$ and its deformation space $\B(\GG)$ (resp. $\mathrm{Hyp}(\GG)$) may be considered as a Coxeter orbifold $\mathcal{O}$ and the deformation space of convex real projective structures (resp. hyperbolic structures) on $\mathcal{O}$ (see e.g. \cite{MR3375519,CLM_survey}). 
\end{rem}

\subsection{Cartan matrix of Coxeter polytope}

A matrix $A = (A_{ij})$ of size $n \times n$ is a \emph{Cartan matrix}\footnote{The term “Cartan matrix” may have several meanings in the literature. In this paper, we follow Vinberg \cite{MR0302779}.} if
\begin{itemize}
\item[(i)] $A_{ii} = 2$\: $\forall i = 1, \dotsc, n$;\quad (ii) $A_{ij} \leqslant 0$\: $\forall i \neq j$;\quad (iii) $A_{ij} =0 \Leftrightarrow A_{ji}=0$;
\item[(iv)] for all $i \neq j$,\, $A_{ij} A_{ji} \geqslant 4$ or $A_{ij} A_{ji}  = 4 \cos^{2} ( \nicefrac{\pi}{m} )$ with some $m \in \mathbb{N} \smallsetminus \{0, 1\} $.
\end{itemize}

A Cartan matrix $A$ is \emph{reducible} if (after a reordering of the indices) $A$ is the direct sum of smaller (square) matrices $A_1$ and $A_2$, \ie $A = \big(\begin{smallmatrix}
  A_1 & 0\\
  0 & A_2
\end{smallmatrix}\big)$.  Otherwise, $A$ is \emph{irreducible}. 
The Perron--Frobenius theorem implies that an irreducible Cartan matrix $A$ has a simple eigenvalue $\lambda_A$ which corresponds to an eigenvector with positive entries and has the smallest modulus among the eigenvalues of $A$. We say that $A$ is of \emph{positive}, \emph{zero} or \emph{negative type} when $\lambda_A$ is positive, zero or negative, respectively. Every Cartan matrix $A$ is the direct sum of irreducible submatrices, each of which we call a \emph{component} of $A$. We denote by $A^{+}$ (resp. $A^{0}$, resp. $A^{-}$) the direct sum of the components of positive (resp. zero, resp. negative) type of $A$. Obviously, the Cartan matrix $A$ is the direct sum of $A^{+}$, $A^{0}$ and $A^{-}$.

\medskip

Let $\mathbb{R}^*_{+}$ be the set of positive real numbers. A Coxeter polytope 
$$ \big(\, P, \,(\sigma_s = \mathrm{Id} - \alpha_s \otimes b_s)_{s \in S} \, \big)$$ 
determines the pairs $(\alpha_s,b_s)_{s \in S}$ in $(V^* \times V)^{S}$, unique up to the following action of $(\mathbb{R^*_{+})}^{S}$ on $(V^* \times V)^{S}$:
$$ (\lambda_s)_{s\in S} \, \cdot \,(\alpha_s,b_s)_{s\in S}  \mapsto (\lambda_s \, \alpha_s, \lambda_s^{-1} \, b_s)_{s\in S}$$
This action leads to define an equivalence relation on Cartan matrices: two Cartan matrices $A$ and $A'$ of the same size are \emph{equivalent} if there exists a positive diagonal matrix $D$ such that $A'=DAD^{-1}$. We denote by $[A]$ the equivalence class of $A$. A Cartan matrix $A$ is \emph{symmetrizable} if it is equivalent to a symmetric matrix.

\medskip

Now, to a Coxeter polytope $P$ is associated an $S \times S$ Cartan matrix $A_P$ defined by $(A_P)_{s t} = \alpha_{s}(b_{t})$ for each pair of facets $s, t$ of $P$. Here the Cartan matrix $A_P$ depends on the choice of the pairs $(\alpha_s,b_s)_{s \in S}$, but the equivalence class $[A_P]$ does not. We call $A_P$ a \emph{Cartan matrix of $P$}.

\begin{de}\label{def_3types}
A Coxeter polytope $P$ of $\S^d$ is \emph{elliptic} if $A_P = A_P^{+}$, \emph{parabolic} if $A_P = A_P^0$ and $A_P$ is of rank $d$, and \emph{loxodromic} if $A_P = A_P^{-}$ and $A_P$ is of rank $d+1$.
\end{de}

The Cartan matrix $A_P$ is irreducible if and only if the Coxeter group $W_P$ is irreducible. In this case, we call $P$ \emph{irreducible}. The following theorem shows how important the Cartan matrix is: 

\begin{theorem}[{Vinberg \cite[Cor. 1]{MR0302779}}]\label{surj}
Let $A$ be a Cartan matrix. Assume that $A$ is irreducible, of negative type and of rank $d+1$. Then there exists a Coxeter $d$-polytope $P$ unique up to automorphism of $\S^d$ such that $A_{P} = A$.
\end{theorem}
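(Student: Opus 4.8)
The plan is to prove existence by realizing $A$ geometrically through a rank factorization, and uniqueness by the rigidity of such a factorization up to a linear change of coordinates. Throughout let $n$ denote the size of $A$ and $V = \R^{d+1}$.

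For existence, since $A$ has rank $d+1$, I would begin from a full-rank factorization $A = \mathcal{A}\mathcal{B}$, where $\mathcal{A}$ is an $n \times (d+1)$ matrix and $\mathcal{B}$ a $(d+1) \times n$ matrix, both of rank $d+1$. Reading the rows of $\mathcal{A}$ as functionals $\alpha_s \in V^*$ and the columns of $\mathcal{B}$ as vectors $b_s \in V$, one gets $\alpha_s(b_t) = A_{st}$ for all $s,t$. In particular $\alpha_s(b_s) = A_{ss} = 2$ by condition (i), so each $\sigma_s := \mathrm{Id} - \alpha_s \otimes b_s$ is a genuine projective reflection with support $\S(\ker\alpha_s)$.

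The crux is then to check that $P := \bigcap_s \S(\{x : \alpha_s(x) \leqslant 0\})$ is an honest properly convex $d$-polytope whose facets are exactly the $\S(\ker \alpha_s)$. This is where the hypotheses enter. Let $v \in \R^n$ be the Perron--Frobenius eigenvector of $A$ with positive entries, so $Av = \lambda_A v$; since $A$ is of negative type, $\lambda_A < 0$. Setting $x_0 := \sum_t v_t \, b_t \in V$, one computes $\alpha_s(x_0) = \sum_t A_{st}\, v_t = \lambda_A v_s < 0$ for every $s$, so $P$ has nonempty interior and is full-dimensional. Because $A$ has rank $d+1$, the rows $\alpha_s$ span $V^*$, hence $\bigcap_s \ker \alpha_s = \{0\}$ and the defining cone is salient; thus $P$ is properly convex. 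Finally, no inequality is redundant: if $\alpha_{s_0} = \sum_{s \neq s_0} \mu_s \alpha_s$ with $\mu_s \geqslant 0$ (the form of redundancy forced by Farkas' lemma for a salient cone), then evaluating at $b_{s_0}$ would give $2 = \alpha_{s_0}(b_{s_0}) = \sum_{s \neq s_0}\mu_s A_{s s_0} \leqslant 0$ by conditions (i) and (ii), a contradiction; so each $\alpha_s$ cuts out a true facet. The two bullet conditions in the definition of a Coxeter polytope now follow directly from conditions (ii)--(iv), and by construction $A_P = A$.

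For uniqueness, suppose $P$ and $P'$ both realize $A$, giving full-rank factorizations $A = \mathcal{A}\mathcal{B} = \mathcal{A}'\mathcal{B}'$. Comparing column spaces (resp. row spaces) shows that $\mathcal{A}' = \mathcal{A}g^{-1}$ and $\mathcal{B}' = g\mathcal{B}$ for a unique $g \in \mathrm{GL}_{d+1}(\R)$; rescaling $g$ by a scalar arranges $\det g = \pm 1$, i.e. $g \in \mathrm{SL}^{\pm}(V) = \mathrm{Aut}(\S^d)$. This $g$ sends $b_s \mapsto b'_s$ and $\alpha_s \mapsto \alpha'_s$, hence conjugates each $\sigma_s$ to $\sigma'_s$ and maps $P$ onto $P'$, which is the desired uniqueness up to automorphism of $\S^d$. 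I expect the main obstacle to be the middle step: verifying that the algebraic realization is a nondegenerate properly convex polytope carrying exactly the prescribed facet structure. The negative-type hypothesis is precisely what makes the Perron--Frobenius eigenvector produce a \emph{strict} interior point (rather than a boundary point, as in the parabolic case), and the rank-$(d+1)$ hypothesis is what forces full dimensionality and salience; juggling these two inputs is the delicate part, whereas the factorization and the uniqueness argument are essentially linear algebra.
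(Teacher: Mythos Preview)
The paper does not prove this statement; it is quoted from Vinberg \cite[Cor.~1]{MR0302779} and used as a black box. So there is no ``paper's own proof'' to compare against.

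That said, your argument is correct and is essentially Vinberg's original one. The existence half is exactly the standard realization: a full-rank factorization $A=\mathcal{A}\mathcal{B}$ produces the data $(\alpha_s,b_s)$, the Perron--Frobenius eigenvector (with $\lambda_A<0$) furnishes a strict interior point, spanning of $V^*$ gives salience, and your Farkas/evaluation-at-$b_{s_0}$ trick rules out redundant facets. The uniqueness half is likewise the standard rigidity of full-rank factorizations: any two differ by a unique $g\in\mathrm{GL}_{d+1}(\R)$, and since $\mathrm{Aut}(\S^d)=\mathrm{GL}_{d+1}(\R)/\R_{>0}\cong\mathrm{SL}^{\pm}_{d+1}(\R)$, this $g$ descends to an automorphism of $\S^d$ carrying $P$ to $P'$.

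Two minor remarks. First, the salience of the cone plays no role in the Farkas step; Farkas applies to any polyhedral cone, and your argument goes through verbatim without that clause. Second, in the uniqueness paragraph, the phrase ``rescaling $g$ by a scalar'' is slightly misleading: rescaling $g$ would spoil the exact relations $b'_s=gb_s$ and $\alpha'_s=\alpha_s g^{-1}$. The cleaner way to say it is that $g\in\mathrm{GL}_{d+1}(\R)$ already induces an automorphism of $\S^d$, and one may then choose the representative in $\mathrm{SL}^{\pm}_{d+1}(\R)$ if desired. Neither point affects the correctness of the proof.
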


\subsection{Perfect, quasi-perfect and 2-perfect polytopes}\label{subsection:2-perfect} 

Let $P$ be a Coxeter polytope of dimension $d$. For each vertex $v$ of $P$, we shall construct a new Coxeter polytope $P_v$ of dimension $d-1$, which is the Coxeter polytope “seen” from $v$. We call $P_v$ the \emph{link} of $P$ at $v$. First consider the set $S_v$ of facets containing $v$. Second notice that for each $s \in S_v$, the reflection $\sigma_s$ acts trivially on the subspace $\langle v\rangle$ of $\R^{d+1}$ spanned by $v$, hence $\sigma_s$ induces a reflection of the projective sphere $\S \Big( \Quotient{\R^{d+1}}{\langle v\rangle} \Big)$ of dimension $d-1$. Finally the projective polytope 
$$\bigcap_{s \in S_v}  \S \big( \{ x \in \Quotient{\R^{d+1}}{\langle v\rangle}  \mid \alpha_s(x) \leqslant 0 \}  \big)  $$
together with the induced reflections gives us the \emph{link} $P_v$ of $P$ at $v$.  A Coxeter polytope $P$ is \emph{$2$-perfect} if for each vertex $v$ of $P$, the link $P_v$ is perfect. This definition of $2$-perfectness is equivalent to the one in the introduction. If all vertex links are elliptic or parabolic, then $P$ is said to be \emph{quasi-perfect}. If $P$ is quasi-perfect, then $P$ is $2$-perfect because every elliptic or parabolic Coxeter polytope is perfect.

\medskip

In the case that $\GG$ is a \emph{labeled} polytope, the \emph{link} $\GG_v$ of $\GG$ at a vertex $v$ is simply the link of the underlying combinatorial polytope together with the obvious ridge labeling induced from $\GG$. To a labeled polytope $\GG$ is naturally associated a Coxeter group $W_{\GG}$, which we call the \emph{Coxeter group of $\GG$}. A labeled polytope $\GG$ is \emph{irreducible}, \emph{spherical}, \emph{affine} or \emph{large} when so is $W_{\GG}$ respectively. A labeled polytope $\GG$ is \emph{perfect} (resp. \emph{2-perfect}) if the link $\GG_v$ is spherical (resp. perfect) for each vertex $v$ of $\GG$. 

\medskip

\begin{rem}\label{rem:independance}
Let $\GG$ be a labeled polytope and $P$ a Coxeter polytope realizing $\GG$. Then $P$ is perfect (resp. 2-perfect) if and only if $\GG$ is perfect (resp. 2-perfect).
\end{rem}

Another construction of new Coxeter polytope from old ones is the \emph{join} of two Coxeter polytopes. We denote by $\hat{\S}_d$ the natural projection $\mathbb{R}^{d+1} \smallsetminus \{ 0 \} \rightarrow \S^{d}$, and let $\S_d^{-1}(A) := \hat{\S}_d^{-1}(A) \cup \{ 0 \} $ for any subset $A$ of $\S^{d}$. Given two Coxeter polytopes $(P,(\sigma_s)_{s \in S})$ and $(P',(\sigma_{s'})_{{s'} \in S'})$ of dimension $d$ and $d'$ respectively, we construct a Coxeter polytope of dimension $(d + d' +1)$, denoted by $P \otimes P'$: the projective polytope $\S_{d + d' +1}(\S_{d}^{-1}(P) \times \S_{d'}^{-1}(P'))$ together with $(\# S + \# S')$ reflections $(\sigma_s \times \mathrm{Id})_{s \in S}$ and $(\mathrm{Id} \times \sigma_{s'})_{s' \in S'}$ in $\mathrm{SL}^{\pm}_{d+d'+2}(\mathbb{R})$. For example, the join of a Coxeter $d$-polytope $P$ and a Coxeter $0$-polytope is a Coxeter $(d+1)$-polytope, denoted by $P \otimes \cdot\,$, whose underlying polytope is the cone over $P$, \ie the pyramid with base $P$. Here the Coxeter $0$-polytope is a point of $\S^0$ with Coxeter group $\Quotient{\mathbb{Z}}{2 \mathbb{Z}}$. We can also define the join $\Omega \otimes \Omega'$ of two convex subsets $\Omega$ and $\Omega'$. 

\medskip

The following theorem allows us to focus on \emph{irreducible}, \emph{large}, $2$-perfect labeled polytopes by giving the complete description of the deformation space of any $2$-perfect labeled polytope except large ones.

\begin{theorem}[{Vinberg \cite[Prop.\,26]{MR0302779}} for perfect polytopes and {Marquis \cite[Prop.\,5.1]{Marquis:2014aa}}]\label{tri}
Let $\GG$ be a $2$-perfect labeled $d$-polytope with Coxeter group $W_{\GG}$. Assume that the deformation space $\B(\GG)$ is nonempty. Then:

\begin{enumerate}\setlength{\itemsep}{5pt}
\item if $W_{\GG}$ is spherical, then $\B(\GG)$ consists of only one isomorphism class $[P]$, which is elliptic, and $\O_P = \S^d$;

\item if $W_{\GG} = \tilde{A}_{d}$, then $\B(\GG)$ consists of one parameter family of isomorphism classes $[P]$:
\begin{itemize}
\item either $P$ is parabolic and $\O_P$ is an affine chart $\mathbb{A}^d$ of $\mathbb{S}^d$,
\item or $P$ is loxodromic, and $\O_P$ is a simplex $\Delta^d$ of dimension $d$;
\end{itemize} 
\item if $W_{\GG} = \tilde{A}_{d-1} \times A_1$, then $\B(\GG)$ consists of one parameter family of classes $[P] = [Q \otimes \cdot \,]$: 
\begin{itemize}
\item either $Q$ is parabolic and $\O_Q$ is an affine chart $\mathbb{A}^{d-1}$ (so $\O_P = \mathbb{A}^{d-1} \otimes \S^0$), 
\item or $Q$ is loxodromic, and $\O_Q$ is a simplex $\Delta^{d-1}$ (so $\O_P = \Delta^{d-1} \otimes \S^0$);
\end{itemize}

\item if $W_{\GG}$ is infinite and virtually abelian but is neither $\tilde{A}_{d}$ nor $\tilde{A}_{d-1} \times A_1$, then $\B(\GG) = \{[P]\}$:
\begin{itemize}
\item either $P$ is parabolic and $\O_P = \mathbb{A}^d$,
\item or $P =  Q \otimes \cdot \,$ with $Q$ parabolic, and $\O_P = \mathbb{A}^{d-1} \otimes \S^0$;
\end{itemize}

\item otherwise, $W_{\GG}$ is large, and for each $[P] \in \B(\GG)$,
\begin{itemize}
\item either $P$ is irreducible and loxodromic, and $\O_P$ is properly convex,
\item or $P =  Q \otimes \cdot \,$ with $Q$ irreducible and loxodromic, and $\O_P = \O_Q \otimes \S^0$ with $\O_Q$ properly convex.
\end{itemize}
\end{enumerate}
Moreover, if $\GG$ is perfect, then each $[P] \in \B(\GG)$ is either elliptic, parabolic, or irreducible and loxodromic.
\end{theorem}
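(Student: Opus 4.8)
The plan is to read off the possibilities for a class $[P] \in \B(\GG)$ from the Cartan matrix $A_P$ together with its type decomposition $A_P = A_P^{+} \oplus A_P^{0} \oplus A_P^{-}$. Since $P$ realizes $\GG$, its Coxeter matrix coincides with that of $\GG$, so $W_P = W_{\GG}$; and because $(A_P)_{st} = \alpha_s(b_t)$ vanishes exactly when there is no edge $\overline{st}$ in the Coxeter diagram, the irreducible components of $A_P$ are in bijection with the components of $W_{\GG}$. First I would establish the type dictionary for a single block $A_{S_i}$: if $W_{S_i}$ is spherical the block is positive definite and, its diagram being a tree, is determined up to diagonal equivalence, hence rigid of full rank $\#S_i$; if $W_{S_i}$ is large the block is indefinite, so of negative type; and if $W_{S_i}$ is affine the block is of zero type and rank $\#S_i - 1$ \emph{unless} its diagram is a cycle $\tilde{A}_k$ (with $\tilde{A}_1$ the two-node diagram), in which case the product of the off-diagonal entries around the cycle is an equivalence invariant not determined by $W_{\GG}$, and this single real modulus makes the block either zero type (at the balanced value) or negative type (otherwise). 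This dictionary already yields the rigidity underlying cases (1) and (4) and the one-parameter families in cases (2) and (3).

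Next I would record the geometric dictionary between the type of $A_P$ and the domain $\O_P$, via Theorem~\ref{theo_vinberg} and Theorem~\ref{surj}. If $A_P = A_P^{+}$ then $W_P$ is finite, the $\Gamma_P$-orbit tiles the sphere, and $\O_P = \C_P = \S^d$ (elliptic); combined with the rigidity above this settles case (1) entirely, since a spherical $W_{\GG}$ admits no non-positive block. If $A_P = A_P^{0}$ is of rank $d$ then $W_P$ is affine and $\O_P$ is an affine chart $\A^d$ (parabolic). If $A_P$ is irreducible of negative type and rank $d+1$ then Theorem~\ref{surj} supplies existence and uniqueness up to automorphism of $\S^d$, with $\O_P$ properly convex (loxodromic). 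Finally, when $A_P$ splits as a direct sum the polytope is the corresponding join $\otimes$ and $\O_P$ is the join of the factors' domains; in particular a single residual positive block $A_1$ realizes the pyramid $P = Q \otimes \cdot$ with $\O_P = \O_Q \otimes \S^0$, which is precisely the second alternative in (3) and (5).

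The heart of the argument, and the place where the $2$-perfect hypothesis is indispensable, is to show that $A_P$ carries at most one non-positive (zero- or negative-type) component, and that this component is of corank at most $1$. By Theorem~\ref{theo_vinberg}.(5), $2$-perfectness is equivalent to $W_{S_e}$ being spherical for every edge $e$ of $P$, where $S_e$ is the set of $d-1$ facets through $e$; hence no principal submatrix of $A_P$ indexed by the facets of a positive-dimensional face can be non-positive. I would combine this with the rank bound $\mathrm{rank}(A_P) \leqslant d+1$ (the $\alpha_s$ lie in the $(d+1)$-dimensional space $V^*$) and the corank count $\#S - \mathrm{rank}(A_P) = \#\{\text{zero components}\} + \sum_{\text{negative}} \mathrm{corank}$ to exclude two distinct non-positive components and to bound the corank of the surviving one. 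Pinning this down determines the admissible $W_{\GG}$: in the affine case the unique non-positive block must absorb the entire corank, forcing $\tilde{A}_d$ (case 2), $\tilde{A}_{d-1} \times A_1$ (case 3), or a rigid affine diagram coned at most once (case 4); in the large case the unique negative block is either irreducible of rank $d+1$ (loxodromic) or of rank $d$ together with one residual $A_1$ (a pyramid over an irreducible loxodromic), which is exactly case (5). The final assertion follows at once, since perfectness forbids a non-spherical vertex link, hence forbids an $A_1$ coning a non-positive block, leaving only the elliptic, parabolic, and irreducible loxodromic types.

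I expect the main obstacle to be precisely this last structural step: proving that $2$-perfectness confines the non-positive part of $A_P$ to a single component of corank at most one, and matching the residual coranks to the join factors. The difficulty is that a non-positive component need not be of the form $S_f$ for a single face $f$, so one cannot simply read the conclusion off one face; instead one must argue through the edge submatrices $A_{S_e}$ and the kernel of $A_P$, using Vinberg's finer correspondence between the spectral type of a principal submatrix, the spherical/affine/indefinite nature of the associated standard subgroup, and which faces of $P$ lie in $\O_P$. Once this is secured, the enumeration of $\B(\GG)$ is routine: uniqueness in the rigid cases comes from the diagonal-equivalence normalization together with Theorem~\ref{surj}, while in cases (2) and (3) the single cyclic modulus of the $\tilde{A}_k$ block traces out exactly the asserted one-parameter family.
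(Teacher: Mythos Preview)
The paper does not supply its own proof of this theorem: it is stated with attribution to Vinberg \cite[Prop.~26]{MR0302779} and Marquis \cite[Prop.~5.1]{Marquis:2014aa} and then used as a black box. So there is no in-paper argument to compare against; what can be said is whether your sketch is a plausible reconstruction of the cited proofs.

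In broad outline your plan is the right one and is essentially Vinberg's: decompose $A_P$ into its positive/zero/negative blocks, translate the type of each block into the spherical/affine/large trichotomy for the corresponding component of $W_{\GG}$, and use the rank bound $\mathrm{rank}(A_P)\leqslant d+1$ together with the join construction to read off the possibilities. Your identification of the $\tilde{A}_k$ cycle invariant as the sole modulus in cases (2) and (3), and of the residual $A_1$ factor as producing the cone $Q\otimes\cdot$, is correct.

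There is, however, a genuine gap at exactly the point you flag. Your proposed mechanism for the structural step --- that $2$-perfectness means $W_{S_e}$ is spherical for every edge $e$, hence the principal submatrix $A_{S_e}$ is positive --- does not by itself bound the number or corank of non-positive components of $A_P$, because an irreducible non-positive component $A_{S_i}$ need not be contained in any $S_e$. What is actually needed is a separate argument (carried out in \cite{Marquis:2014aa}) that in a $2$-perfect polytope the set $S$ of facets does not split as $S_1\sqcup S_2$ with both $W_{S_1}$ and $W_{S_2}$ infinite; this uses the combinatorics of the polytope (in particular that facets meet along ridges) rather than the edge-link condition alone. A second point your sketch leaves open is the identification of $\O_P$ in the loxodromic $\tilde{A}_d$ case as a simplex: this is not automatic from Theorem~\ref{surj}, which only gives proper convexity, and requires an explicit computation of the tiled domain. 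Finally, your treatment of $\tilde{A}_1$ as ``the two-node diagram'' with a cycle invariant needs care, since there the label is $\infty$ and the invariant is $A_{st}A_{ts}\geqslant 4$ rather than a normalized cyclic product.
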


\begin{rem}\label{rem:parabolic_inv=0}
In the case $W_{\GG} = \tilde{A}_{d}$, a class $[P] \in \B(\GG)$ is parabolic if and only if $\det(A_P)=0$. In Section \ref{section:moduli_of_simplex}, we introduce a more interesting invariant $R : \B(\GG) \rightarrow \mathbb{R}$ such that  $R$ is a homeomorphism, and $[P]$ is parabolic if and only if $R([P])= 0$.
\end{rem}

\subsection{Invariant of Cartan matrix}

Let $\GG$ be a labeled polytope with Coxeter group $W_S$, and let $P$ be a Coxeter polytope realizing $\GG$ with Cartan matrix $A$. A $k$-tuple of distinct elements of $S$ is called a \emph{$k$-circuit} of $W_S$. For each $k$-circuit $\C= (i_1, i_2, \dotsc, i_k)$, we define a number 
$$\C(A) = A_{i_1 i_2} A_{i_2 i_3} \cdots A_{i_k i_1}$$ 
 which does not change upon the cyclic permutation of $\C$ and upon the choice of a representative in the class $[A]$. Such a number is called a \emph{cyclic product} of $A$. From now on, a $k$-circuit of $W_S$ is always considered as the $k$-circuit up to cyclic permutation, \ie
$$(i_1, i_2, \dotsc, i_{k-1}, i_k) = (i_2, i_{3}, \dotsc, i_{k}, i_{1}) = \dotsm = (i_k, i_1, \dotsc, i_{k-2}, i_{k-1}).$$

\medskip

The cyclic products are useful because of the following:

\begin{theorem}[{Vinberg \cite[Prop.\,16]{MR0302779}}]\label{inj}
Let $\GG$ be a labeled $d$-polytope. Assume that two Coxeter $d$-polytopes $P$ and $P'$ realize $\GG$. Then the following are equivalent:
\begin{itemize}
\item the Coxeter polytopes $P$ and $P'$ are isomorphic;
\item the Cartan matrices $A_P$ and $A_{P'}$ are equivalent;
\item all the cyclic products of $A_{P}$ and $A_{P'}$ are equal.
\end{itemize}
\end{theorem}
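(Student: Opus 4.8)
The plan is to prove the two equivalences (i)$\Leftrightarrow$(ii) and (ii)$\Leftrightarrow$(iii), writing (i), (ii), (iii) for the three bulleted conditions (isomorphic; equivalent Cartan matrices; equal cyclic products). The step (i)$\Rightarrow$(ii) is a direct conjugation computation, (ii)$\Leftrightarrow$(iii) is an elementary fact about diagonal similarity of matrices with prescribed cyclic products, and the substantial step is (ii)$\Rightarrow$(i), the construction of a projective automorphism. Throughout I index the facets of both $P$ and $P'$ by the common set $S$ via the poset isomorphisms supplied by $\GG$, and I write $\sigma_s = \mathrm{Id} - \alpha_s \otimes b_s$ and $\sigma_s' = \mathrm{Id} - \alpha_s' \otimes b_s'$. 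An isomorphism $\psi \in \mathrm{SL}^\pm(V)$ between the Coxeter polytopes satisfies $\psi \sigma_s \psi^{-1} = \sigma_s'$ for every $s \in S$. Since $\psi \sigma_s \psi^{-1} = \mathrm{Id} - (\alpha_s \circ \psi^{-1}) \otimes (\psi b_s)$, equating this rank-one tensor with $\alpha_s' \otimes b_s'$ yields a single scalar $\mu_s$ with $\alpha_s' = \mu_s^{-1}(\alpha_s \circ \psi^{-1})$ and $b_s' = \mu_s\, \psi b_s$; here $\mu_s > 0$ because $\psi$ carries the half-space $\{\alpha_s \leqslant 0\}$ onto $\{\alpha_s' \leqslant 0\}$ and the half-line $[b_s]$ onto $[b_s']$ in $\S(V)$. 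Then $(A_{P'})_{st} = \alpha_s'(b_t') = \mu_s^{-1}\mu_t\, \alpha_s(b_t) = \mu_s^{-1}\mu_t\, (A_P)_{st}$, so $A_{P'} = D^{-1} A_P D$ with $D = \mathrm{diag}(\mu_s)$ positive, which is exactly (ii).

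For (ii)$\Leftrightarrow$(iii): if $A_{P'} = D A_P D^{-1}$ then, for any circuit $\C = (i_1,\dotsc,i_k)$, the diagonal factors telescope around the cycle, giving $\C(A_{P'}) = \C(A_P)$, so (ii)$\Rightarrow$(iii). Conversely, assume all cyclic products agree. The $2$-circuit products $(A_P)_{st}(A_P)_{ts}$ are cyclic products, hence equal to those of $A_{P'}$, and in particular the two matrices have the same zero pattern (both recorded by $\GG$: an entry vanishes exactly when the facets are non-adjacent or meet at a right angle). Form the graph on $S$ with an edge for each pair with $(A_P)_{st} \neq 0$; on each connected component fix a spanning tree rooted at some $s_0$, put $d_{s_0} = 1$, and propagate $d_t := d_s\, (A_P)_{st}/(A_{P'})_{st}$ along tree edges (positive, as the two entries share sign). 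The $2$-circuit equality makes both off-diagonal relations $(A_{P'})_{st} = d_s (A_P)_{st} d_t^{-1}$ and $(A_{P'})_{ts} = d_t (A_P)_{ts} d_s^{-1}$ hold on tree edges; for a non-tree edge, equality of the cyclic product around the fundamental cycle it closes forces the same relation, and for $s,t$ in different components both entries vanish. Thus $A_{P'} = D A_P D^{-1}$ with $D = \mathrm{diag}(d_s)$ positive, proving (iii)$\Rightarrow$(ii).

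It remains to prove (ii)$\Rightarrow$(i), which I regard as the main obstacle. Using the rescaling action $(\lambda_s)_{s\in S} \cdot (\alpha_s, b_s)_{s\in S} = (\lambda_s \alpha_s, \lambda_s^{-1} b_s)_{s\in S}$ — which conjugates $A_P$ by $\mathrm{diag}(\lambda_s)$ while changing neither the reflections $\sigma_s$ nor the polytope $P$ — I may replace the data of $P$ so that $A_P = A_{P'} =: A$ on the nose. I then seek $\psi \in \mathrm{SL}^\pm(V)$ with $\alpha_s \circ \psi^{-1} = \alpha_s'$ and $\psi b_s = b_s'$ for all $s$, since such a $\psi$ conjugates $\sigma_s$ to $\sigma_s'$ and carries $P = \bigcap_s \S(\{\alpha_s \leqslant 0\})$ onto $P'$. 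Because $P$ and $P'$ are genuine $d$-polytopes of $\S^d$, the families $\{\alpha_s\}$ and $\{\alpha_s'\}$ each span $V^*$ (otherwise a common neutral direction would force $P$ to contain a line), so it suffices to produce a linear isomorphism $g$ of $V^*$ with $g\alpha_s = \alpha_s'$; setting $\psi := (g^*)^{-1}$ gives $\alpha_s \circ \psi^{-1} = g\alpha_s = \alpha_s'$, and then $\alpha_t'(\psi b_s) = \alpha_t(b_s) = A_{ts} = \alpha_t'(b_s')$ for all $t$ forces $\psi b_s = b_s'$ as the $\alpha_t'$ span $V^*$. Such a $g$ exists exactly when the linear dependencies among $\{\alpha_s\}$ and among $\{\alpha_s'\}$ coincide. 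In the \emph{loxodromic} case $\mathrm{rank}(A) = d+1$, the poles $\{b_s\}$ span $V$, so a relation $\sum_s c_s \alpha_s = 0$ is equivalent to $\sum_s c_s A_{st} = 0$ for all $t$ — a condition depending only on $A$ — and the identical criterion holds for $\{\alpha_s'\}$, so the dependencies match and $g$ exists. The delicate point is the degenerate case $\mathrm{rank}(A) \leqslant d$: then the poles no longer span $V$ and these dependencies are not recoverable from $A$ alone. Here I would appeal to the classification of Theorem \ref{tri}: outside the large case, $\B(\GG)$ is described explicitly (a point, or a one-parameter family with prescribed $\O_P$), and the elliptic and parabolic normal forms exhibit $\psi$ directly; in the large case $P$ is irreducible loxodromic, or a cone $Q \otimes \cdot$ over an irreducible loxodromic $Q$, and the cone case reduces to $Q$ by restricting $\psi$ to the loxodromic factor. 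This reduction to the loxodromic situation, where the construction above applies verbatim, completes the argument.
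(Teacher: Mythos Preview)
The paper does not prove this theorem: it is quoted from Vinberg \cite[Prop.\,16]{MR0302779} and used as a black box. So there is no ``paper's own proof'' to compare against; I will assess your argument on its own terms.

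Your implications (i)$\Rightarrow$(ii) and (ii)$\Leftrightarrow$(iii) are clean and correct; the spanning-tree argument for (iii)$\Rightarrow$(ii) is the standard one. Your treatment of (ii)$\Rightarrow$(i) in the \emph{loxodromic} case is also correct: once $A_P=A_{P'}$ and $\operatorname{rank}A=d+1$, the poles $b_s$ span $V$, so the kernel of the evaluation map $(\alpha_s)_s\mapsto(\alpha_s(b_t))_{s,t}$ recovers the linear relations among the $\alpha_s$ from $A$ alone, and the desired $\psi$ exists. This already suffices for every use of Theorem~\ref{inj} in the paper, since the Coxeter simplices appearing in the proof of Theorem~\ref{t:moduli_simplex} have irreducible large Coxeter group and are therefore loxodromic by Theorem~\ref{tri}.

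The gap is in the non-loxodromic case. Theorem~\ref{inj} is stated for an arbitrary labeled $d$-polytope $\GG$, with no $2$-perfectness assumption, whereas Theorem~\ref{tri} requires $\GG$ to be $2$-perfect; so invoking it does not cover the full statement. There is also a circularity risk: Theorem~\ref{tri} describes $\B(\GG)$ as a set of \emph{isomorphism classes}, so statements like ``$\B(\GG)$ is a singleton'' already presuppose the ability to recognise when two realizations are isomorphic, which is precisely what (ii)$\Rightarrow$(i) asserts. Even granting Theorem~\ref{tri}, in the flexible cases (e.g.\ $W_\GG=\tilde A_d$) you still need to check that the Cartan matrix separates the one-parameter family, which you gesture at but do not carry out. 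A self-contained route in the degenerate case is to argue componentwise on $A=A^+\oplus A^0\oplus A^-$ and use Vinberg's normal forms for the elliptic and parabolic pieces directly; alternatively, simply note that the paper never needs the statement outside the loxodromic range.
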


To avoid redundant cyclic products, we are motivated to introduce the following: A $k$-circuit $\C$ of $W_S$ is \emph{relevant} if $\C$ corresponds to a cycle of the underlying graph of $\mathcal{D}_W$ or to an edge of label $\infty$ in $\mathcal{D}_W$. Note that for any $[P] \in \B(\GG)$,

\begin{itemize}
\item in the case $k=1$, $\C(A_P)$ is always $2$, which justifies that $\C$ is not relevant;
\item in the case $k=2$, if $\C$ is not relevant, then $\C(A_P) = 4 \cos^2(\nicefrac{\pi}{m})$ for a fixed $m$;
\item in the case $k \geqslant 3$, if $\C$ is not relevant, then $\C$ contains two consecutive elements $i, j$ such that $(A_P)_{ij} = (A_P)_{ji} = 0 $, so $\C(A_P) = 0$.
\end{itemize}

\medskip

A slightly modified cyclic product is more useful than the original one when $W_S$ has no edge of label $\infty$: let $ \C = (i_1, i_2, \dotsc, i_k)$ be a relevant $k$-circuit of $W_S$ and $\overline{\C} =  (i_k, i_{k-1} \dotsc, i_1)$, which we call the \emph{opposite} circuit of $\C$ or the circuit with \emph{opposite orientation}. A \emph{normalized} cyclic product of $\C$ is defined by:
$$ R_{\C}(A_P) = \log \left(   \frac{\C(A_P)}{\overline{\C}(A_P)} \right)$$
Since there is no edge of label $\infty$ in $\mathcal{D}_W$, the quantity $\C(A_P) \, \overline{\C}(A_P)$ is constant only depending on $W_S$. So the normalized cyclic product $R_{\C}(A_P)$ contains the same amount of information as $\C(A_P)$. Clearly, $R_{\C}(A_P) + R_{\overline{\C}}(A_P) = 0$.

\begin{rem}
The topological properties of the underlying graph $U_W$ of $\mathcal{D}_W$ are important, hence we say that the Coxeter group $W_S$ is \emph{of type “something”} if the graph $U_W$ is “something”. For example, the word “something” can be replaced by “tree”, “cycle”, and so on.
\end{rem}

\subsection{Tits simplices}\label{sub:tits_simplex}

Given a Coxeter group $W_S$, we build a labeled polytope $\Ss_W$ and a Coxeter polytope $\Delta_W$. Their underlying polytopes are simplex of dimension $\# S-1$. 

\medskip

The construction of $\Ss_W$ is straightforward. Suppose $W_S$ is associated to a Coxeter matrix $M$ on $S$. The underlying combinatorial polytope of $\Ss_W$ is simplex with $\# S$ facets, the set of facets of $\Ss_W$ identifies with $S$, and for every pair of distinct facets $s, t$ of $S$, the label of the ridge $s \cap t$ of $\Ss_W$ is $\nicefrac{\pi}{M_{st}}$. 

\medskip

We now construct the Coxeter simplex $\Delta_W$ of $\mathbb{S}(\mathbb{R}^S)$. A key observation is that to any Cartan matrix $A = (A_{st})_{s,t \in S}$ can be associated a Coxeter simplex $\Delta_A$ of $\mathbb{S}(\mathbb{R}^S)$ as follows:
\begin{itemize}
\item for each $t \in S$, we set $\alpha_t = e_t^*$, where $(e_t^*)_{t \in S}$ is the canonical dual basis of $\mathbb{R}^S$;
\item for each $t \in S$, we take the unique vector $b_t \in \mathbb{R}^S$ such that
 $ \alpha_s(b_t) = A_{st}$ for all $s \in S$;
\item the Coxeter simplex $\Delta_A$ is the pair of the projective simplex
 $$\bigcap_{s \in S} \mathbb{S}(\{x \in \mathbb{R}^S \mid \alpha_s(x) \leqslant 0 \})$$ and the set of reflections $(\sigma_s = \mathrm{Id} - \alpha_s \otimes b_s)_{s \in S}$.
\end{itemize}

\begin{rem}\label{remark:compatible}
An $S \times S$ Cartan matrix $A$ is \emph{compatible} with a Coxeter group $W_S$ provided that for every $s, t \in S$, $A_{st}A_{ts} = 4 \cos^2(\nicefrac{\pi}{M_{st}})$ if $M_{st} \neq \infty$, and $A_{st} A_{ts} \geqslant 4$ otherwise. If this is the case, then $[\Delta_A] \in \B(\Ss_{W})$. 
\end{rem}

Let $\Gamma_{A} := \Gamma_{\Delta_A}$ be the discrete subgroup of $\mathrm{SL}^{\pm}(\mathbb{R}^S)$ generated by the reflections $(\sigma_s)_{s\in S}$ (see Theorem \ref{theo_vinberg}). In particular, if $A$ is \emph{symmetric}, then by Vinberg \cite[Th. 6]{MR0302779}, there exists a $\Gamma_{A}$-invariant symmetric form $B_A$ on $V_A$ such that $B_A(b_s,b_t) = A_{st}$ for every $s, t \in S$, where $V_A$ is the subspace of $\mathbb{R}^S$ spanned by $(b_s)_{s \in S}$. For example, for any Coxeter group $W_S$, the $S \times S$ \emph{Cosine matrix} $\mathrm{Cos}(W)$ with entries
$$ (\mathrm{Cos}(W))_{st} = -2 \cos\left( \frac{\pi}{M_{st}} \right) $$
is a symmetric Cartan matrix compatible with $W_S$. We call $\Delta_W := \Delta_{\mathrm{Cos}(W)}$ the \emph{Tits simplex} of $W_S$ and $B_W := B_{\mathrm{Cos}(W)}$ the \emph{Tits symmetric form} of $W_S$. If $B_W$ is nondegenerate, then $\G_{W}$ is a subgroup of the orthogonal group $O(B_W)$ of the form $B_W$ on $\mathbb{R}^S$.

\begin{rem}
Each vertex $v$ of $\Ss_W$ has a unique opposite facet $s_v \in S$, since the polytope $\Ss_W$ is a simplex. The link of $\Ss_W$ at $v$ is isomorphic to $\Ss_{W_{S \smallsetminus \{ s_v \} }}$.
\end{rem}

A Coxeter group $W_S$ is \emph{Lannér} (resp. \emph{quasi-Lannér})\footnote{Sometimes quasi-Lannér Coxeter groups are called \emph{Koszul} Coxeter groups.} if it is large and $W_{S \smallsetminus \{ s \}} $ is spherical (resp. spherical or irreducible affine) for each $s \in S$. These are classical terms, and those Coxeter groups were classified by Lannér \cite{MR0042129}, Koszul \cite{LectHypCoxGrKoszul} and Chein \cite{MR0294181}. Note that quasi-Lannér Coxeter groups are irreducible. We now introduce a less classical terminology: a Coxeter group $W_S$ is \emph{2-Lannér} if it is irreducible, large, and $W_{S \smallsetminus \{ s,t \}} $ is spherical for every $s \neq t \in S$. The 2-Lannér Coxeter groups were classified by Maxwell \cite{MR679972} (see Theorem \ref{t:maxwell}). He actually enumerated the list of all \emph{Lorentzian} Coxeter groups $W_S$, \ie $W_{S \smallsetminus \{ s,t \}} $ is spherical or irreducible affine for every $s \neq t \in S$. The following easy lemmas justify our terminology:

\begin{lemma}\label{prop:label_simplex}
A labeled simplex $\Ss$ is perfect (resp. $2$-perfect), irreducible and large if and only if the Coxeter group $W_{\Ss}$ is Lannér (resp. 2-Lannér). 
\end{lemma}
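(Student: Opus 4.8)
The plan is to unwind the definitions of \emph{perfect} and \emph{2-perfect} for the labeled simplex $\Ss$ through the link structure of a simplex, the only genuine input being the identification of the links recalled just above. Write $S$ for the facet set of $\Ss$, so that $W_{\Ss} = W_S$. Since a labeled polytope is by definition irreducible (resp.\ large) exactly when its Coxeter group is, the hypotheses ``$\Ss$ irreducible'' and ``$\Ss$ large'' translate verbatim into ``$W_S$ irreducible'' and ``$W_S$ large''; thus the whole content lies in matching the perfectness hypotheses with the Lannér-type conditions on the spherical standard subgroups. Throughout I use that each vertex $v$ of $\Ss$ is opposite a unique facet $s_v$, that $v \mapsto s_v$ is a bijection between vertices and facets, and that the link $\Ss_v$ is the Tits labeled simplex $\Ss_{W_{S \smallsetminus \{s_v\}}}$, whose Coxeter group is the standard subgroup $W_{S \smallsetminus \{s_v\}}$.

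For the perfect statement I would first record the auxiliary fact that a Tits labeled simplex on a facet set $T$ is perfect if and only if $W_{T \smallsetminus \{s\}}$ is spherical for every $s \in T$: indeed, perfectness means that every vertex link is spherical, and the link opposite $s$ is $\Ss_{W_{T \smallsetminus \{s\}}}$, which is spherical precisely when $W_{T \smallsetminus \{s\}}$ is. Applying this to $\Ss$ itself, ``$\Ss$ perfect and large'' becomes ``$W_S$ large and $W_{S \smallsetminus \{s\}}$ spherical for every $s \in S$'', which is exactly the definition of $W_S$ being Lannér. To obtain the equivalence with the full left-hand side I still must account for the hypothesis that $\Ss$ is irreducible: in the forward direction it is simply dropped, while in the backward direction I would invoke that every Lannér (indeed every quasi-Lannér) Coxeter group is irreducible, as noted above, so that a Lannér $W_S$ forces $\Ss$ to be irreducible as well.

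For the 2-perfect statement I would iterate the same computation one dimension down. By definition $\Ss$ is 2-perfect if and only if every link $\Ss_v = \Ss_{W_{S \smallsetminus \{s_v\}}}$ is perfect, and by the auxiliary fact (now on the facet set $S \smallsetminus \{s_v\}$) this holds exactly when $W_{S \smallsetminus \{s_v, s\}}$ is spherical for every $s \in S \smallsetminus \{s_v\}$. Letting $v$ range over all vertices---equivalently $s_v$ over all of $S$---the unordered pairs $\{s_v, s\}$ exhaust all pairs of distinct facets, so ``$\Ss$ is 2-perfect'' is precisely ``$W_{S \smallsetminus \{s,t\}}$ is spherical for all $s \neq t$ in $S$''. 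Conjoining this with the already-translated conditions ``$W_S$ irreducible'' and ``$W_S$ large'' reproduces verbatim the definition of $W_S$ being 2-Lannér, and here nothing extra is needed since irreducibility is already part of the 2-Lannér definition.

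The argument is essentially a definition chase, so I expect no serious obstacle; the two points demanding care are purely bookkeeping. First, one must keep straight the bijection $v \leftrightarrow s_v$ so that the spherical conditions on the links assemble into conditions over all single facets (perfect case) and over all pairs of facets (2-perfect case). Second, one should observe that the links arising in the 2-perfect case need not be irreducible, but this causes no trouble, because perfectness of a labeled polytope requires only that its links be spherical and carries no irreducibility demand; hence the auxiliary perfectness criterion applies to each link regardless of its reducibility.
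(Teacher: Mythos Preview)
Your proof is correct and is exactly the definition-chase one expects for this lemma; the paper itself treats Lemma~\ref{prop:label_simplex} as an ``easy lemma'' and does not supply a proof. Your handling of the bookkeeping (the bijection $v\leftrightarrow s_v$, the link identification $\Ss_v \cong \Ss_{W_{S\smallsetminus\{s_v\}}}$, and the observation that Lann\'er groups are irreducible) is precisely what is needed.
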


\begin{lemma}\label{prop:Cox_simplex}
Let $W_S$ be an irreducible, large Coxeter group. Then $W_S$ is Lannér (resp. quasi-Lannér, resp. 2-Lannér) if and only if the Tits simplex $\Delta_W$ is perfect (resp. quasi-perfect, resp. 2-perfect). 
\end{lemma}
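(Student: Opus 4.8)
The plan is to unwind both sides of each equivalence in terms of the vertex links of $\Delta_W$ and apply the definitions of perfect, quasi-perfect and $2$-perfect. Recall that $\Delta_W$ is a Coxeter simplex, so its facets are indexed by $S$, and each vertex $v$ of the underlying simplex is the intersection of all facets but one; thus $v$ corresponds to a unique omitted facet $s_v \in S$, and the set $S_v$ of facets containing $v$ is exactly $S \smallsetminus \{ s_v \}$. By the remark preceding the statement, the link $(\Delta_W)_v$ is isomorphic to the Tits simplex of $W_{S \smallsetminus \{ s_v \}}$. Since $W_S$ is assumed irreducible and large, it is in particular neither spherical nor affine, so Theorem \ref{theo_vinberg}.(5) tells us that no open top-dimensional face of $\Delta_W$ lies in $\O_{\Delta_W}$; the whole question reduces to which faces of $\Delta_W$ meet $\O_{\Delta_W}$, and Theorem \ref{theo_vinberg}.(5) identifies these precisely as the faces $f$ with $W_{S_f}$ spherical.

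First I would treat the \emph{perfect} case. By the characterization of perfectness following Theorem \ref{theo_vinberg}, $\Delta_W$ is perfect if and only if $W_{S_v}$ is spherical for every vertex $v$, \ie $W_{S \smallsetminus \{ s \}}$ is spherical for every $s \in S$. Combined with the standing hypothesis that $W_S$ is irreducible and large, this is by definition exactly the statement that $W_S$ is Lannér. For the \emph{quasi-perfect} case, I would use the definition in Section \ref{subsection:2-perfect}: $\Delta_W$ is quasi-perfect if and only if every vertex link $(\Delta_W)_v \cong \Delta_{W_{S \smallsetminus \{ s_v \}}}$ is elliptic or parabolic. The point is to translate the elliptic/parabolic dichotomy of the link into a spherical/affine dichotomy of $W_{S \smallsetminus \{ s_v \}}$: an irreducible Coxeter group that is spherical (resp. affine) gives rise to a Tits simplex that is elliptic (resp. parabolic), and conversely. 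Thus $\Delta_W$ is quasi-perfect if and only if $W_{S \smallsetminus \{ s \}}$ is spherical or irreducible affine for each $s$, which together with irreducible and large is exactly quasi-Lannér.

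For the \emph{$2$-perfect} case, I would invoke the definition that $\Delta_W$ is $2$-perfect precisely when each vertex link $(\Delta_W)_v$ is perfect. Applying the perfect case already proved to the link $\Delta_{W_{S \smallsetminus \{ s_v \}}}$, the link is perfect if and only if $W_{(S \smallsetminus \{ s_v \}) \smallsetminus \{ t \}} = W_{S \smallsetminus \{ s_v, t\}}$ is spherical for every remaining facet $t$, \ie $W_{S \smallsetminus \{ s, t \}}$ is spherical for every $s \neq t$; with $W_S$ irreducible and large this is exactly the definition of $2$-Lannér. The main subtlety to handle carefully is the reduction from the intrinsic geometric conditions (elliptic/parabolic type of the link, or the link lying in its own convex domain) to the purely combinatorial conditions on standard subgroups of $W_S$; this amounts to checking that passing to a link commutes with the notions of perfect and of spherical/affine type, which follows from the identification of the link as a Tits simplex of a standard subgroup together with Theorem \ref{theo_vinberg}.(5). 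Once that bookkeeping is set up, each of the three equivalences is a direct translation, and no genuine computation is required.
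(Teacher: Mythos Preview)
The paper does not give a proof of this lemma; it is introduced as one of two ``easy lemmas'' justifying the terminology and is left to the reader. Your approach---identifying each vertex $v$ of $\Delta_W$ with the omitted generator $s_v$, recognising the link as the Tits simplex of $W_{S\smallsetminus\{s_v\}}$, and then reading off the spherical/irreducible-affine/spherical conditions from the characterisation of perfectness in Theorem~\ref{theo_vinberg} and the definitions of elliptic and parabolic---is exactly the intended unwinding, and it is correct.

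One small point of care: in the quasi-perfect paragraph you write ``an \emph{irreducible} Coxeter group that is spherical (resp.\ affine) gives rise to a Tits simplex that is elliptic (resp.\ parabolic), and conversely'', but $W_{S\smallsetminus\{s\}}$ need not be irreducible. The statement you actually need is that the Tits simplex of $W_{S\smallsetminus\{s\}}$ is elliptic iff $W_{S\smallsetminus\{s\}}$ is spherical (this holds regardless of irreducibility, since the Cosine matrix is positive definite exactly in the spherical case), and is parabolic iff $W_{S\smallsetminus\{s\}}$ is \emph{irreducible} affine (the rank condition in Definition~\ref{def_3types} forces corank~$1$, ruling out reducible affine or mixed spherical/affine products). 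With that clarification the equivalence with quasi-Lann\'er is clean. Similarly, in the $2$-perfect paragraph you should invoke directly the general characterisation ``perfect $\Leftrightarrow$ every vertex group spherical'' rather than ``the perfect case already proved'', since the latter was stated under the standing hypothesis that the ambient group is irreducible and large, which need not hold for $W_{S\smallsetminus\{s_v\}}$.
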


Lemma \ref{prop:label_simplex} implies that there exists a one-to-one correspondence between the irreducible, large, $2$-perfect labeled $d$-simplices and the 2-Lannér Coxeter groups of rank $d+1$.

\begin{rem}\label{remark:hyperbolic}
In general, the signature of the Tits symmetric form $B_W$ of a Coxeter group $W_S$ can be arbitrary. However, Maxwell \cite[Th.\,1.9]{MR679972} proved that if $W_S$ is 2-Lannér, then $B_W$ is nondegenerate and of signature $(p,1)$ with $p = \# S-1$. In other words, the group $\Gamma_\Delta$ generated by the reflections of $\Delta_W$ is conjugate to a discrete subgroup of $\mathrm{O}^+_{p,1}(\mathbb{R})$, which is isomorphic to $\mathrm{Isom}(\mathbb{H}^{p})$, and hence $\Delta_W$ is a hyperbolic Coxeter simplex.
\end{rem}

\begin{rem}\label{remark:loxo_imply}
If $P$ is a loxodromic perfect Coxeter $d$-simplex, then the Coxeter group $W_P$ is either Lannér or $\tilde{A}_{d}$ by Theorem \ref{tri} and Lemma \ref{prop:label_simplex}.
\end{rem}

\subsection{Classification of Lannér, quasi-Lannér and 2-Lannér Coxeter groups}

Recall that if a Coxeter group $W_S$ has rank $d+1$, then the labeled polytope $\Ss_W$ and the Tits simplex $\Delta_W$ of $W$ has dimension $d$.

\subsubsection*{Dimension $d = 1, 2, 3$}

It is obvious that there exists no Lannér Coxeter group of rank $2$. Every irreducible large Coxeter group $W_S$ of rank $3$ is quasi-Lannér, and it is Lannér if and only if the Coxeter diagram $\mathcal{D}_W$ has no edge of label $\infty$. An irreducible large Coxeter group $W_S$ of rank $4$ is 2-Lannér if and only if $\mathcal{D}_W$ has no edge of label $\infty$.

\subsubsection*{Dimension $d \geqslant 4$}

\begin{theorem}[Maxwell \cite{MR679972}]\label{t:maxwell}
Let $d \in \mathbb{N}$. If $4 \leqslant d \leqslant 9$, then there exist finitely many 2-Lannér Coxeter groups of rank $d+1$. The numbers of such Coxeter groups are given in Table \ref{tab:nb_2lanner}. The complete list can be found in Chen--Labbé \cite{Chen:2013fk} or Appendix \ref{classi_lorent}. If $d \geqslant 10$, then there is no 2-Lannér Coxeter group of rank $d+1$.
\end{theorem}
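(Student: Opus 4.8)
The plan is to reduce both assertions---finiteness for $4 \leqslant d \leqslant 9$ and nonexistence for $d \geqslant 10$---to the already-classified families of Lann\'er and quasi-Lann\'er Coxeter groups, together with an elementary analysis of the Coxeter diagram. Write $n = d+1$ for the rank. The first step is a purely combinatorial lemma on the \emph{vertex links}: for each $s \in S$ the standard subgroup $W_{S \sm \{s\}}$ has rank $n-1$, and since $W_S$ is $2$-Lann\'er, every one of its maximal standard subgroups $W_{S \sm \{s,t\}}$ is spherical. I would show such a group must be spherical, irreducible affine, or Lann\'er: if it were reducible with a non-spherical component, deleting a generator from any other component would leave a non-spherical subgroup, contradicting that all its codimension-one standard subgroups are spherical; hence it is spherical or irreducible, and in the irreducible non-spherical case the trichotomy spherical/affine/large together with ``all maximal standard subgroups spherical'' forces it to be affine or Lann\'er.

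With this lemma the dimension bound is immediate. If \emph{all} vertex links are spherical or affine, then $W_S$ is by definition quasi-Lann\'er, and quasi-Lann\'er Coxeter $d$-simplices exist only for $d \leqslant 9$ (Koszul \cite{LectHypCoxGrKoszul} and Chein \cite{MR0294181}). If instead some vertex link $W_{S \sm \{s\}}$ is Lann\'er, then it is a Lann\'er group of rank $n-1 = d$, which exists only for $d \leqslant 4$ by Lann\'er \cite{MR0042129}. In either case $d \leqslant 9$, so there is no $2$-Lann\'er Coxeter group of rank $d+1$ once $d \geqslant 10$.

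For finiteness in the range $4 \leqslant d \leqslant 9$ I would bound the edge labels. Since $n \geqslant 5$, retaining any edge $\overline{st}$ while deleting two other nodes shows that an edge of label $\infty$ would survive inside a spherical, hence finite-type, rank-$(n-2)$ subdiagram, which is impossible; so $\mathcal{D}_W$ has no $\infty$-edge. Moreover, if an edge $\overline{st}$ carried a label $m \geqslant 6$, then by irreducibility one of its endpoints, say $s$, has another neighbour $u$; deleting two nodes distinct from $s,t,u$ (possible as $n \geqslant 5$) leaves a finite-type subdiagram in which $s$ has degree $\geqslant 2$ and lies on a bond of label $m \geqslant 6$. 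But in a finite-type Coxeter diagram every bond of label $\geqslant 6$ forms an isolated rank-$2$ component $I_2(m)$ whose two nodes have degree $1$, a contradiction. Hence every edge label lies in $\{3,4,5\}$, and for each $d$ with $4 \leqslant d \leqslant 9$ there are only finitely many Coxeter diagrams on $n \leqslant 10$ nodes with edge labels in $\{3,4,5\}$; the $2$-Lann\'er groups form a finite subset of these.

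Finally, the precise counts in Table \ref{tab:nb_2lanner} are obtained by carrying out this enumeration explicitly: one lists the admissible diagrams and, via Remark \ref{remark:hyperbolic}, checks that the associated Cosine matrix is nondegenerate of Lorentzian signature $(n-1,1)$, so that $W_S$ is genuinely large with all edges meeting $\mathbb{H}^{n-1}$. I expect the main obstacle to be exactly this last, computational step: conceptually the dimension bound and the finiteness come cheaply from the structural lemma and the label bound, but certifying that each candidate diagram is $2$-Lann\'er---and that no others are---is the combinatorial heart of the classification, which is why we rely on the verified list of Chen--Labb\'e \cite{Chen:2013fk} reproduced in Appendix \ref{classi_lorent}.
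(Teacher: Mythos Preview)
The paper does not give a proof of this theorem: it is stated as a classification result due to Maxwell, with the explicit list deferred to Chen--Labb\'e and to Appendix~\ref{classi_lorent}. So there is no proof in the paper to compare your argument against. That said, your sketch is a reasonable outline of how one would establish the qualitative parts of the statement, and I will comment briefly on it.

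Your structural lemma is correct: for a $2$-Lann\'er group $W_S$, each $W_{S\sm\{s\}}$ has all maximal standard subgroups spherical, and the reducibility/irreducibility dichotomy forces it to be spherical, irreducible affine, or Lann\'er. There is, however, an off-by-one slip in your dimension bound. Lann\'er Coxeter $d$-simplices exist for $d=2,3,4$, so Lann\'er Coxeter \emph{groups} exist in ranks $3,4,5$; hence if some vertex link $W_{S\sm\{s\}}$ is Lann\'er of rank $n-1=d$, the correct conclusion is $d\leqslant 5$, not $d\leqslant 4$. This does not affect the final bound $d\leqslant 9$, which is governed by the quasi-Lann\'er case.

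Your label bound argument (no $\infty$-edges, no labels $\geqslant 6$) is fine for $n\geqslant 5$: in both cases you can delete two nodes avoiding the relevant vertices and land in a spherical subdiagram where the configuration is impossible. This gives finiteness immediately. As you acknowledge, the actual counts in Table~\ref{tab:nb_2lanner} are a separate enumeration; the paper, like you, relies on the verified list rather than reproducing that computation.
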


\newcommand{\siz}{0.15}
\newcommand{\size}{0.11}

\begin{table}[h]
\centering
\begin{tabular}{ccccc}
\toprule[\siz em]
Dimension  & $\#$ of & $\#$ of 2-Lannér & $\#$ of quasi-Lannér & $\#$ of Lannér\\
$d$     &             2-Lannér               & not quasi-Lannér   & not Lannér                 &  Coxeter groups\\
\toprule[\siz em]
4 & 45 & 31 & 9 & 5\\
5 & 23 & 11 & 12& 0\\
\hline
6 & 3 & 0 & 3 & 0\\
7 & 4 & 0 & 4 & 0\\
8 & 4 & 0 & 4 & 0\\
9 & 3 & 0 & 3 & 0\\
\toprule[\siz em]
\end{tabular}
\vspace*{1em}
\caption{The numbers of 2-Lannér Coxeter groups}
\label{tab:nb_2lanner}
\end{table}

\section{Deformation space of 2-perfect simplex}\label{section:moduli_of_simplex}

The aim of this section is to parametrize the deformation space $\B(\Ss)$ of an irreducible, large, $2$-perfect labeled simplex $\Ss$ of dimension $d \geqslant 4$. The parameterization is explicitly described in the proof of Theorem \ref{t:moduli_simplex}. Recall that a Coxeter $d$-polytope $P$ of $\S^d$ is \emph{hyperbolic} if the reflection group $\Gamma_P$ lies in a conjugate of $\mathrm{O}^+_{d,1}(\mathbb{R}) \subset \mathrm{SL}^{\pm}_{d+1}(\mathbb{R})$. 

\begin{theorem}\label{t:moduli_simplex}
Let $\Ss$ be an irreducible, large, $2$-perfect labeled simplex of dimension $d \geqslant 4$, and $W$ its Coxeter group. If $e_+$ denotes the number of edges of the Coxeter diagram $\mathcal{D}_W$, then the deformation space $\B(\Ss)$ is an open cell of dimension $b(\Ss) = e_+ \! - d \in \{0, 1, 2 \}$. Moreover, $\B(\Ss)$ contains exactly one isomorphism class of hyperbolic Coxeter $d$-simplex, which is the Tits simplex $\Delta_W$ of $W$.
\end{theorem}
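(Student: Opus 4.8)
The plan is to identify $\B(\Ss)$ with the set of equivalence classes of Cartan matrices compatible with $W$, coordinatized by their cyclic products, and to recognize this set as a real cohomology group of the underlying graph $U_W$ of $\mathcal{D}_W$. First I would record two structural facts. By Lemma \ref{prop:label_simplex} the group $W$ is $2$-Lannér of rank $d+1 \geqslant 5$, and in this range $\mathcal{D}_W$ carries no edge of label $\infty$: an $\infty$-bond $\{u,v\}$ would persist in $W_{S \smallsetminus \{s,t\}}$ for any two generators $s,t \notin \{u,v\}$ (available since $\# S \geqslant 5$), forcing that standard subgroup to be infinite, contrary to $2$-Lannér. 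Consequently every relevant circuit of $W$ is a genuine cycle of $U_W$, and the natural coordinates on the moduli are the normalized cyclic products $R_{\C}$ attached to cycles.

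The core of the argument is to build a homeomorphism $\B(\Ss) \to \R^{b}$ with $b = e_+ - d$. For injectivity I would invoke Theorem \ref{inj}: a class $[P]$ is determined by the cyclic products $\C(A_P)$; since $\C(A_P)\,\overline{\C}(A_P) = \prod_{e \in \C} 4\cos^2(\pi/M_e)$ is fixed by the labels of $\Ss$, the class is already determined by the normalized products $R_{\C}$. For surjectivity and the dimension count I would fix a spanning tree $T$ of $U_W$ and normalize any compatible Cartan matrix to be symmetric along $T$; the remaining freedom is one ratio per edge outside $T$, and these edges number $e_+ - (d+1) + 1 = e_+ - d = b_1(U_W)$, their fundamental cycles forming a basis $\C_1,\dots,\C_b$ of the cycle space. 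Given arbitrary $(R_1,\dots,R_b) \in \R^{b}$, the Cartan matrix $A$ that is symmetric on $T$ and carries the prescribed asymmetry on the $b$ remaining edges is compatible with $W$, so by Remark \ref{remark:compatible} the simplex $\Delta_A$ satisfies $[\Delta_A] \in \B(\Ss)$ and realizes exactly $(R_1,\dots,R_b)$. As the cyclic products and the assignment $A \mapsto \Delta_A$ are continuous, the resulting bijection $\B(\Ss) \cong \R^{b}$ is a homeomorphism, whence $\B(\Ss)$ is an open cell of dimension $b = e_+ - d$. (That every such class is moreover irreducible and loxodromic follows from Theorem \ref{tri}.(5) since $W$ is large, but this is not needed for the cell structure.)

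For the last assertion I would characterize the hyperbolic classes as the symmetrizable ones. A realizing simplex $P$ is hyperbolic if and only if $\Gamma_P$ preserves a nondegenerate symmetric form, if and only if $A_P$ is equivalent to a symmetric matrix, if and only if every normalized cyclic product $R_{\C}(A_P)$ vanishes; under the homeomorphism above this is precisely the origin of $\R^{b}$, namely $[\Delta_W]$. Finally $\Delta_W$ is genuinely hyperbolic by Remark \ref{remark:hyperbolic} (Maxwell's theorem gives $B_W$ nondegenerate of signature $(p,1)$, $p = \# S - 1$, so $\Gamma_{\Delta} \subset \mathrm{O}^+_{p,1}(\R) \cong \Isom(\HH^{p})$), so $\B(\Ss)$ contains exactly one hyperbolic class, the Tits simplex.

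It remains to see $b \in \{0,1,2\}$. Since $b = b_1(U_W)$, this is the statement that the underlying graph of a $2$-Lannér diagram of rank $\geqslant 5$ has cycle rank at most $2$, which I would read off from Maxwell's classification (Theorem \ref{t:maxwell} and Appendix \ref{classi_lorent}). The main obstacle is the bookkeeping in the second paragraph: one must verify that the normalized cyclic products over a cycle basis are genuinely free, global coordinates, i.e. that the gauge action by positive diagonal conjugation has quotient exactly $H^1(U_W;\R)$ and that every value is attained with no residual constraint, and then extract the numerical bound $b \leqslant 2$ from the finite list. The spectral and convexity content that would be delicate in a general deformation problem is bypassed here, precisely because the construction $A \mapsto \Delta_A$ yields a valid Coxeter simplex realizing $\Ss$ for every compatible $A$.
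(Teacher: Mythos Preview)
Your proof is correct and rests on the same ingredients as the paper's---Theorem~\ref{inj} for injectivity, Remark~\ref{remark:compatible} for surjectivity, Remark~\ref{remark:hyperbolic} for the hyperbolic class, and Maxwell's classification for the bound $b\leqslant 2$---but you package them differently. The paper proceeds by first invoking the classification to reduce to the four graph types (tree, cycle, pan, $K_{2,3}$) and then parametrizes each case by hand, in particular handling $K_{2,3}$ via three cyclic products constrained to a hyperplane $\{x_1+x_2+x_3=0\}$. You instead give a uniform argument: fix a spanning tree, recognize the deformation space as $H^1(U_W;\R)\cong\R^{b_1(U_W)}$, and only appeal to the classification at the end for the numerical bound. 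Your route is cleaner and your direct argument that $\mathcal{D}_W$ carries no $\infty$-edge is a nice touch (the paper takes this as implicit from the list). On the other hand, the paper's explicit case-by-case coordinates are not wasted effort: they are reused verbatim in later sections (e.g.\ Proposition~\ref{prop:deformation_block} and Section~\ref{subsec:evaluation}) to analyze truncated simplices and to build the evaluation map, so the concrete parametrization has downstream value that your abstract version would have to reconstruct.
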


In the case that $\Ss$ is perfect, the similar statement for Theorem \ref{t:moduli_simplex} can be found in Nie \cite{MR3449173}. The proof essentially follows from a simple computation, together with some results of Vinberg \cite{MR0302779} and classification Theorem \ref{t:maxwell}.

\begin{proof}
By Lemma \ref{prop:label_simplex}, the Coxeter group $W$ is a 2-Lannér Coxeter group of rank $\geqslant 5$. Hence, Theorem \ref{t:maxwell} implies that $W$ is of type either tree, cycle, pan or $K_{2,3}$ (see Appendix \ref{classi_lorent} and Figure \ref{fig:terminology}). 

\begin{figure}[ht!]
\centering
\begin{tabular}{ccc}
\begin{tikzpicture}[thick,scale=0.7, every node/.style={transform shape}] 
\node[draw,circle] (A) at (-90:1){};
\node[draw,circle] (B) at (-90+72:1){};
\node[draw,circle] (C) at (-90+144:1){};
\node[draw,circle] (D) at (-90+216:1){};
\node[draw,circle] (E) at (-90-72:1){};  
\draw (A) -- (B) node[above,midway] {};
\draw (B) -- (C) node[above,midway] {};
\draw (C) -- (D) node[above,midway] {};
\draw (D) -- (E) node[above,midway] {};
\draw (E) -- (A) node[above,midway] {};
\end{tikzpicture}
\quad\quad\quad
&
\begin{tikzpicture}[thick,scale=0.7, every node/.style={transform shape}] 
\node[draw,circle] (A) at (0:1){};
\node[draw,circle] (B) at (90:1){};
\node[draw,circle] (C) at (180:1){};
\node[draw,circle] (D) at (-90:1){};
\node[draw,circle] (E) at (-1-1.414,0){};
\draw (A) -- (B) node[above,midway] {};
\draw (B) -- (C) node[above,midway] {};
\draw (C) -- (D) node[above,midway] {};
\draw (D) -- (A) node[above,midway] {};
\draw (E) -- (C) node[above,midway] {};
\end{tikzpicture}
\quad\quad\quad
&
\begin{tikzpicture}[thick,scale=0.7, every node/.style={transform shape}] 
\node[draw,circle] (G1) at (0:1){};
\node[draw,circle] (G2) at (90:1) {};
\node[draw,circle] (G3) at (180:1) {}; 
\node[draw,circle] (G4) at (-90:1) {};
\node[draw,circle] (G5) at (0,0) {};
\draw (G1) -- (G2)  node[above,midway] {};
\draw (G2) -- (G3)  node[above,midway] {};
\draw (G3) -- (G4) node[above,midway] {};
\draw (G4) -- (G1) node[above,midway] {};
\draw (G4) -- (G5) node[above,midway] {};
\draw (G5) -- (G2) node[above,midway] {};
\end{tikzpicture}
\\
\end{tabular}
\caption{A $5$-cycle, a $4$-pan and $K_{2,3}$ from left to right}
\label{fig:terminology}
\end{figure}
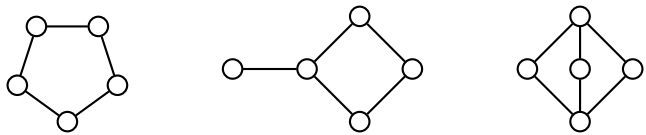

\medskip

Since the Coxeter diagram $\mathcal{D}_W$ has no edge of label $\infty$, we may use \textit{normalized} cyclic products instead of cyclic products to parametrize the space $\B(\Ss)$. We now claim that:
\begin{enumerate}
\item if $W$ is of tree type, then $\B(\Ss)$ is a singleton;
\item if $W$ is of cycle type or pan type, then $\B(\Ss)$ is homeomorphic to $\mathbb{R}$;
\item if $W$ is of $K_{2,3}$ type, then $\B(\Ss)$ is homeomorphic to $\mathbb{R}^2$.
\end{enumerate}

\begin{enumerate}\setlength{\itemsep}{5pt}
\item \label{para:tree} In the case of tree type, there is no relevant circuit of $W$. So, Theorem \ref{inj} implies that $\B(\Ss) = \{ [\Delta_W] \}$.

\item\label{para:circle}
In the case of cycle or pan type, there exist only two relevant circuits $\C$ and $\overline{\C}$ in $W$. If $W$ is of cycle type (resp. of pan type), then $\C$ is a $(d+1)$-circuit (resp. $d$-circuit). The map $R : \B(\Ss) \to \R$ defined by $R([P]) = R_{\C}(A_{P})$, the normalized cyclic product of $\C$, is a homeomorphism since $R$ is injective and surjective respectively by Theorem \ref{inj} and Remark \ref{remark:compatible}. Clearly $R([\Delta_W]) = 0$.

\item\label{para:davy}
In the case of $K_{2,3}$ type, there exists three pairs of relevant circuits $\{(\C_i,\overline{\C}_i)\}_{i = 1,2,3}$. Such circuits have length $d$ and $d=4$. For each $[P] \in \B(\Ss)$, we denote by $R ([P]) \in \mathbb{R}^3 $ the triple 
of the normalized cyclic products $(R_{\C_i}(A_{P}))_{i = 1,2,3}$. We choose the orientations of $\{ \C_i \}_{i =1,2,3}$ coherently so that $ \sum_{i=1}^3 R_{\C_i}(A_P) = 0$. 
For example, if $\C_1 = (2, 5, 4, 3)$, $\C_2 = (1, 4, 5, 2)$ and $\C_3 = (1, 2, 3, 4)$ as in the left diagram of Figure \ref{fig:coherent_orientations}, then:
\begin{align*}
& R_{\C_1}(A_P) + R_{\C_2}(A_P) + R_{\C_3}(A_P)
\\ 
 & =  \log \left(   \frac{A_{25}A_{54}A_{43}A_{32}}{A_{23}A_{34}A_{45}A_{52}} \right) + \log \left(   \frac{A_{14}A_{45}A_{52}A_{21}}{A_{12}A_{25}A_{54}A_{41}} \right) + \log \left(   \frac{A_{12}A_{23}A_{34}A_{41}}{A_{14}A_{43}A_{32}A_{21}} \right) 
= 0
\end{align*}

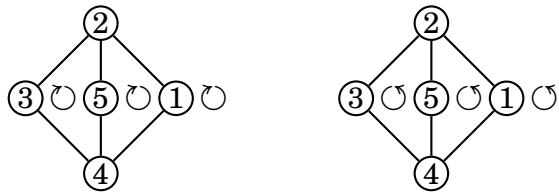
\begin{figure}[ht!]
\centering
\begin{tabular}{cc}
\begin{tikzpicture}[thick,scale=1, every node/.style={transform shape}] 
\node[draw,circle, inner sep=1pt] (G1) at (0:1){1};
\node[draw,circle, inner sep=1pt] (G2) at (90:1) {2};
\node[draw,circle, inner sep=1pt] (G3) at (180:1) {3}; 
\node[draw,circle, inner sep=1pt] (G4) at (-90:1) {4};
\node[draw,circle, inner sep=1pt] (G5) at (0,0) {5};
\node[draw=none] (G6) at (0.5,0) {$\circlearrowright$};
\node[draw=none] (G7) at (-0.5,0) {$\circlearrowright$};
\node[draw=none] (G8) at (1.5,0) {$\circlearrowright$};
\draw (G1) -- (G2) node[above,midway] {};
\draw (G2) -- (G3) node[above,midway] {};
\draw (G3) -- (G4) node[above,midway] {};
\draw (G4) -- (G1) node[above,midway] {};
\draw (G4) -- (G5) node[above,midway] {};
\draw (G5) -- (G2) node[above,midway] {};
\end{tikzpicture}
\quad \quad \quad 
\begin{tikzpicture}[thick,scale=1, every node/.style={transform shape}] 
\node[draw,circle, inner sep=1pt] (G1) at (0:1){1};
\node[draw,circle, inner sep=1pt] (G2) at (90:1) {2};
\node[draw,circle, inner sep=1pt] (G3) at (180:1) {3}; 
\node[draw,circle, inner sep=1pt] (G4) at (-90:1) {4};
\node[draw,circle, inner sep=1pt] (G5) at (0,0) {5};
\node[draw=none] (G6) at (0.5,0) {$\circlearrowleft$};
\node[draw=none] (G7) at (-0.5,0) {$\circlearrowleft$};
\node[draw=none] (G8) at (1.5,0) {$\circlearrowleft$};
\draw (G1) -- (G2) node[above,midway] {};
\draw (G2) -- (G3) node[above,midway] {};
\draw (G3) -- (G4) node[above,midway] {};
\draw (G4) -- (G1) node[above,midway] {};
\draw (G4) -- (G5) node[above,midway] {};
\draw (G5) -- (G2) node[above,midway] {};
\end{tikzpicture}
\end{tabular}
\caption{Two choices of coherent orientations}
\label{fig:coherent_orientations}
\end{figure}

\noindent Theorem \ref{inj} and Remark \ref{remark:compatible} again show that the map $R : \B(\Ss) \to H$ is a homeomorphism, where $H = \{ (x_1, x_2, x_3) \in \mathbb{R}^3 \mid x_1 + x_2 + x_3 = 0 \}$. Again, $R([\Delta_W]) = 0$. 
\end{enumerate}

\medskip

By Remark \ref{remark:hyperbolic}, the Tits simplex $[\Delta_W]$ in $\B(\Ss)$ is hyperbolic. Moreover, if the Coxeter simplex $[P] \in \B(\Ss)$ is hyperbolic, then $A_P$ is symmetrizable hence $R([P])=0$. Then by the previous paragraph, $[P] = [\Delta_W]$. Finally, observe that $e_+\! - d$ is the dimension of $\B(\Ss)$.
\end{proof}

The parametrization described in the proof will be used in the sequel. 

\begin{rem}\label{rem:color1}
A labeled polytope $\GG$ (resp. a Coxeter group $W_S$) is \emph{rigid} if $\B(\GG)$ (resp. $\B(\Ss_{W})$) is a singleton. Otherwise, it is \emph{flexible}. For a $2$-Lannér Coxeter group $W_S$, a node $s \in S$ is “{\em something}” if $W_{S \smallsetminus \{ s \} }$ is “something”. In Appendix \ref{classi_lorent}, some important properties of the nodes are encoded in color: a node $s \in S$ is colored in black, orange, blue or green when $W_{S \smallsetminus \{ s \} }$ is rigid affine, flexible affine, rigid Lannér, or flexible Lannér respectively. In other words, a node is $\tilde{A}_n$ for some $n \geqslant 2$ (resp. Lannér) if and only if it is colored in orange (resp. green or blue). A Lannér node is of cycle type (resp. tree type) if and only if it is green (resp. blue).  
\end{rem}

\medskip

\begin{center}
\begin{tabular}{| c | c | }
\hline
Color & Property of the node $s \in S$ \\
\hline
White & Spherical \\
\hline
Black & Irreducible affine of tree type,\\
        & \ie not $\tilde{A}_n$, so $\Ss_{W_{ S \smallsetminus \{ s \}}}$ is rigid.\\
\hline
Orange & Irreducible affine of cycle type,\\
      & \ie $\tilde{A}_n$, so $\Ss_{ W_{ S \smallsetminus \{ s \}} }$ is flexible.\\
\hline
Blue & Lannér of tree type,\\
       & so $ \Ss_{ W_{ S \smallsetminus \{ s \}} }$ is rigid.\\
\hline
Green & Lannér of cycle type,\\
    & so $\Ss_{ W_{ S \smallsetminus \{ s \}} }$ is flexible.\\
\hline
\end{tabular}
\end{center}

\medskip

\section{Truncation and stacking}\label{section:trucation_stacking}

\subsection{Truncation and stacked polytopes}\label{subsec:truncation}

Let $\GG$ be a combinatorial polytope, and let $v$ be a vertex of $\GG$. A \emph{truncation} of $\GG$ at $v$ is the operation that cuts the vertex $v$, creating a new facet $s$ in place of $v$ (see Figure \ref{fig:truncation}). We denote by $\GG^{\dagger v}$ the polytope obtained by the truncation of $\GG$ at $v$. A polytope is a \emph{truncation $d$-polytope} if it is built from a $d$-simplex by successively truncating vertices. For example, a \emph{$d$-prism} is a truncation $d$-polytope obtained by truncating a $d$-simplex at one vertex. 

\medskip

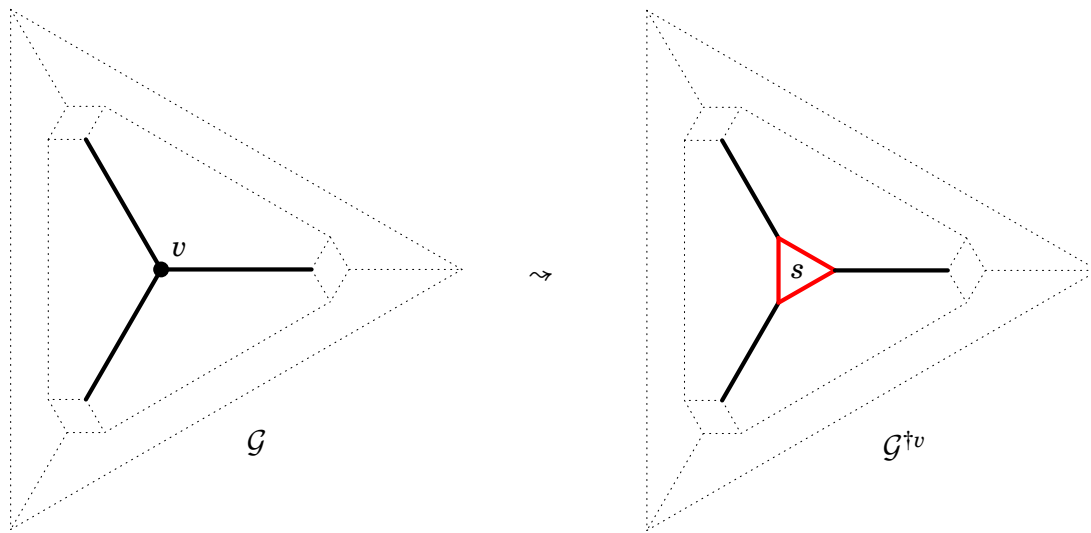
\begin{figure}[ht!]
\centering
\begin{tabular}{>{\centering\arraybackslash}m{.4\textwidth} c >{\centering\arraybackslash}m{.4\textwidth}}
\begin{tikzpicture}[line cap=round,line join=round,>=triangle 45,x=1.0cm,y=1.0cm]
\clip(-2.5,-4) rectangle (4.5,4);
\draw [line width=1.6pt] (-1,1.73)-- (0,0);
\draw [line width=1.6pt] (0,0)-- (-1,-1.73);
\draw [line width=1.6pt] (0,0)-- (2,0);
\draw [dotted] (-1.5,1.73)-- (-1,1.73);
\draw [dotted] (-1,1.73)-- (-0.75,2.17);
\draw [dotted] (2,0)-- (2.25,0.43);
\draw [dotted] (2,0)-- (2.25,-0.43);
\draw [dotted] (-1,-1.73)-- (-1.5,-1.73);
\draw [dotted] (-1,-1.73)-- (-0.75,-2.17);
\draw [dotted] (-1.5,1.73)-- (-1.25,2.17);
\draw [dotted] (-1.25,2.17)-- (-0.75,2.17);
\draw [dotted] (2.25,0.43)-- (2.5,0);
\draw [dotted] (2.5,0)-- (2.25,-0.43);
\draw [dotted] (-1.5,-1.73)-- (-1.25,-2.17);
\draw [dotted] (-1.25,-2.17)-- (-0.75,-2.17);
\draw [dotted] (-1.5,1.73)-- (-1.5,-1.73);
\draw [dotted] (-0.75,-2.17)-- (2.25,-0.43);
\draw [dotted] (2.25,0.43)-- (-0.75,2.17);
\draw [dotted] (-2,3.46)-- (-1.25,2.17);
\draw [dotted] (2.5,0)-- (4,0);
\draw [dotted] (-1.25,-2.17)-- (-2,-3.46);
\draw [dotted] (-2,3.46)-- (-2,-3.46);
\draw [dotted] (-2,-3.46)-- (4,0);
\draw [dotted] (4,0)-- (-2,3.46);
\draw (0,0.5) node[anchor=north west] {$v$};
\draw (1,-2) node[anchor=north west] {$\GG$};

\fill [color=black] (0,0) circle (3pt);
\end{tikzpicture}
&
$\quad \rightsquigarrow \quad$
&
\definecolor{ffqqqq}{rgb}{1,0,0}
\begin{tikzpicture}[line cap=round,line join=round,>=triangle 45,x=1.0cm,y=1.0cm]
\clip(-2.49,-3.99) rectangle (4.5,4.01);
\draw [dotted] (-1.5,1.73)-- (-1,1.73);
\draw [dotted] (-1,1.73)-- (-0.75,2.17);
\draw [dotted] (2,0)-- (2.25,0.43);
\draw [dotted] (2,0)-- (2.25,-0.43);
\draw [dotted] (-1,-1.73)-- (-1.5,-1.73);
\draw [dotted] (-1,-1.73)-- (-0.75,-2.17);
\draw [dotted] (-1.5,1.73)-- (-1.25,2.17);
\draw [dotted] (-1.25,2.17)-- (-0.75,2.17);
\draw [dotted] (2.25,0.43)-- (2.5,0);
\draw [dotted] (2.5,0)-- (2.25,-0.43);
\draw [dotted] (-1.5,-1.73)-- (-1.25,-2.17);
\draw [dotted] (-1.25,-2.17)-- (-0.75,-2.17);
\draw [dotted] (-1.5,1.73)-- (-1.5,-1.73);
\draw [dotted] (-0.75,-2.17)-- (2.25,-0.43);
\draw [dotted] (2.25,0.43)-- (-0.75,2.17);
\draw [dotted] (-2,3.46)-- (-1.25,2.17);
\draw [dotted] (2.5,0)-- (4,0);
\draw [dotted] (-1.25,-2.17)-- (-2,-3.46);
\draw [dotted] (-2,3.46)-- (-2,-3.46);
\draw [dotted] (-2,-3.46)-- (4,0);
\draw [dotted] (4,0)-- (-2,3.46);
\draw [line width=1.6pt] (-1,1.73)-- (-0.25,0.43);
\draw [line width=1.6pt,color=ffqqqq] (-0.25,0.43)-- (-0.25,-0.43);
\draw [line width=1.6pt] (-0.25,-0.43)-- (-1,-1.73);
\draw [line width=1.6pt,color=ffqqqq] (-0.25,-0.43)-- (0.5,0);
\draw [line width=1.6pt,color=ffqqqq] (-0.25,0.43)-- (0.5,0);
\draw [line width=1.6pt] (0.5,0)-- (2,0);
\draw (-0.23,0.23) node[anchor=north west] {$s$};
\draw (1,-2) node[anchor=north west] {$\GG^{\dagger v}$};

\end{tikzpicture}
\end{tabular}
\caption{A truncation of a polytope $\GG$ at a vertex $v$}
\label{fig:truncation}
\end{figure}

For a \emph{labeled} polytope $\GG$, after truncating a vertex $v$ of $\GG$, we additionally attach the labels $\nicefrac{\pi}{2}$ to all the new ridges of $\GG^{\dagger v}$ to obtain a new labeled polytope, which we denote again by $\GG^{\dagger v}$. Similarly, given a set $\mathcal{V}$ of some vertices of $\GG$, we denote by $\GG^{\dagger \mathcal{V}}$ the labeled polytope obtained by successively truncating all the vertices $v\in \mathcal{V}$. 

\begin{rem}\label{rem:2perfectAfterTruncation}
Let $\GG$ be a labeled polytope and let $v$ be a vertex of $\GG$. Each vertex $w$ in the new facet of $\GG^{\dagger v}$ corresponds to a vertex $w'$ of the link $\GG_v$ of $\GG$ at $v$, and 
$$W_{(\GG^{\dagger v})_w} = W_{(\GG_v)_{w'}} \times \Quotient{\mathbb{Z}}{2\mathbb{Z}}$$ So, every vertex in the new facet of $\GG^{\dagger v}$ is elliptic if and only if $\GG_v$ is perfect. As a consequence, if $\GG$ is $2$-perfect then so is $\GG^{\dagger v}$.
\end{rem}

The dual concept of truncation is useful for the later discussion: two combinatorial $d$-polytopes $\GG$ and ${\GG}^{*}$ are \emph{dual} to each other if there exists an inclusion-{\em reversing} bijection $\phi$ between the face posets $\F(\GG)$ and $\F({\GG}^{*})$. The map $\phi$ is called the \emph{dual isomorphism} between $\GG$ and $\GG^*$.

\medskip

Let $\GG$ be a combinatorial polytope and $s$ a facet of $\GG$. A \emph{stacking} of $\GG$ at $s$ is the gluing of a pyramid $\mathcal{Y}$ with base $s$ onto the facet $s$ of $\GG$, where the apex of $\mathcal{Y}$ lies in the interior of the region bounded by the supporting hyperplanes of the facet $s$ and of the facets of $\GG$ adjacent to $s$ (see Figure \ref{fig:staked}).

\begin{figure}[ht!]
\centering
\begin{tabular}{>{\centering\arraybackslash}m{.3\textwidth} c >{\centering\arraybackslash}m{.3\textwidth}}
\includegraphics[scale=.7]{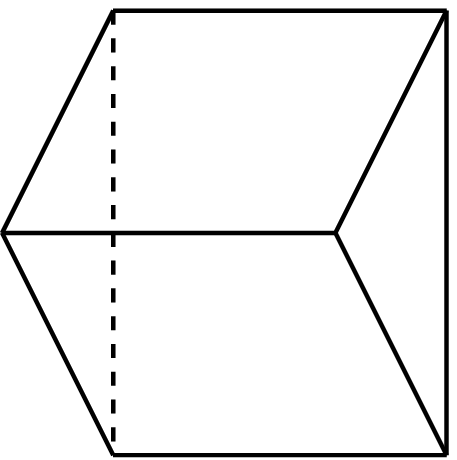}
\small
\put (-13, 43){$s$}
\put (-90, 10){$\GG$}
&
$\rightsquigarrow $
&
\includegraphics[scale=.7]{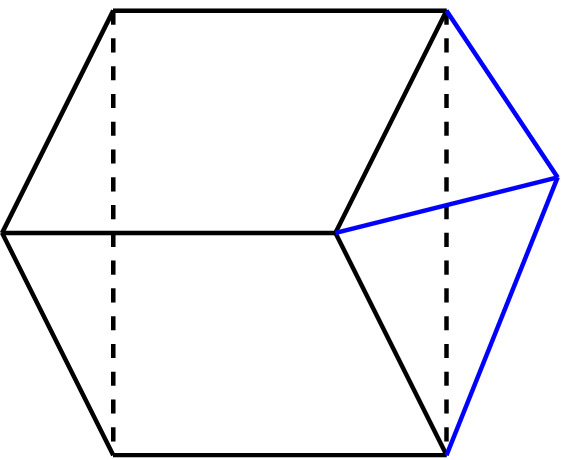}
\\
\end{tabular}
\caption{A stacking of a polytope $\GG$ at a facet $s$}
\label{fig:staked}
\end{figure}

The truncation of $\GG$ at the vertex $v$ is dual to the stacking of the dual polytope ${\GG}^{*}$ at the facet $\phi(v)$, which is dual to $v$. So the polytope $\GG^{\dagger v}$ is dual to the polytope obtained from ${\GG}^{*}$ by stacking at the facet $\phi(v)$. A $d$-polytope is a \emph{stacked polytope} if it is built from the $d$-simplex by a finite number of stacking operations.

\begin{rem}
 A stacked $d$-polytope has a natural triangulation given by the successive stacking operations. This triangulation satisfies the following property $(\star)$: \textit{all the interior faces\footnote{A face $f$ of a triangulation of $\GG$ is {\em interior} if the relative interior of $f$ lies in the interior of $\GG$.} of the triangulation are of codimensions $0$ or $1$.} We know from work of Kleinschmidt \cite{MR0474043} that if $d$ is bigger than $2$, then there exists a \emph{unique} triangulation of a stacked $d$-polytope satisfying $(\star)$. Hence from now on, we call this triangulation the \emph{stacking triangulation} or simply the \emph{triangulation}. 
\end{rem}

\subsection{Truncatable vertex}

We introduce the “geometric” truncation of a Coxeter polytope, which is comparable with the “combinatorial” truncation of a labeled polytope in Section \ref{subsec:truncation}.

\begin{de}
Let $P$ be a Coxeter polytope of $\S^d$, $v$ a vertex of $P$ and $S_v$ the set of facets of $P$ containing $v$. The vertex $v$ of $P$ is \emph{truncatable} if the projective subspace $\Pi_v$ spanned by the poles $\left\{ [b_s] \right\}_{s \in S_v}$ is a hyperplane such that for each edge $e$ containing $v$, the intersection of $\Pi_v$ and the relative interior of $e$ is a singleton.
\end{de}

Suppose $P$ is a Coxeter polytope and $v$ is a truncatable vertex of $P$. We define a new Coxeter polytope $P^{\dagger v}$ as follows: let $\Pi_v^{+}$ (resp. $\Pi_v^{-}$) be the connected component of $\mathbb{S}^d \smallsetminus \Pi_v$ which contains $v$ (resp. which does not contain $v$), and let $\overline{\Pi_v^{-}}$ be the closure of $\Pi_v^{-}$. The underlying polytope of $P^{\dagger v}$ is $P \cap \overline{\Pi_v^{-}}$, which has one \emph{new facet} given by the hyperplane $\Pi_v$ and the \emph{old facets} given by $P$.
The reflection in the new facet of $P^{\dagger v}$ is determined by the support $\Pi_v$ and the pole $v$, and the reflections in the old facets are unchanged. The following properties of $P^{\dagger v}$ can be easily checked:
\begin{itemize}
\item the dihedral angles of the ridges in the new facet of $P^{\dagger v}$ are all $\nicefrac{\pi}{2}$;
\item the hyperplane $\Pi_v$ is preserved by the reflections in the facets in $S_v$, and $P \cap \Pi_v$ is a Coxeter polytope in $\Pi_v$, which is isomorphic to $P_v$.
\end{itemize}

\begin{de}
Let $P$ be a Coxeter polytope and let $\V$ be a set of some vertices of $P$. The set $\V$ is \emph{truncatable} if each vertex $v\in \V$ is truncatable and $P\cap \Pi_v \cap \Pi_w = \varnothing$ for any two vertices $v \neq w \in \V$. In other words, the new facets do not intersect each other.
\end{de}

The following theorem provides a simple criterion when $\V$ is truncatable or not. Recall that a vertex $v$ of a polytope $\GG$ is \emph{simple} if the link $\GG_v$ is a simplex, and a polytope is \emph{simple} if all its vertices are simple. 

\begin{theorem}[{Marquis \cite[Prop.\,4.14 \& Lem.\,4.17]{Marquis:2014aa}}]\label{thm:truncation}
Let $P$ be an irreducible, loxodromic, 2-perfect Coxeter polytope. Assume that $\V$ is a set of some simple vertices of $P$. Then $\V$ is truncatable if and only if the vertex link $P_v$ is loxodromic for each $v \in \V$.
\end{theorem}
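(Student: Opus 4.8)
The plan is to reduce the statement about the set $\V$ to a statement about a single simple vertex, to prove the per-vertex equivalence ``$v$ truncatable $\iff P_v$ loxodromic'' through the Cartan matrix $A_{S_v}$ of the link together with the position of $v$ relative to $\overline{\Omega_P}$, and finally to treat the disjointness of the new facets separately. For the reduction: if $\V$ is truncatable then by definition every $v\in\V$ is truncatable, so the per-vertex statement gives that each $P_v$ is loxodromic; conversely, if each $P_v$ is loxodromic then each $v$ is truncatable, and the only extra content is the disjointness $P\cap\Pi_v\cap\Pi_w=\varnothing$ for $v\neq w$. Throughout I would use that, since $P$ is $2$-perfect, every link $P_v$ is perfect, so by Theorem~\ref{tri} (the perfect case) $P_v$ is elliptic, parabolic, or irreducible loxodromic; and that $v$ simple means $\#S_v=d$, so $A_{S_v}$ is a $d\times d$ Cartan matrix.

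First I would pin down when $\Pi_v$ is a hyperplane. Set $U_v=\mathrm{span}\{b_s : s\in S_v\}\subseteq V$ and let $\bar U_v$ be its image in $V/\langle v\rangle$. Since the $\alpha_t$ $(t\in S_v)$ are $d$ independent functionals with $\bigcap_{t}\ker\alpha_t=\langle v\rangle$ and $\alpha_t(b_s)=(A_{S_v})_{ts}$, a combination $\sum_s c_s b_s$ lies in $\langle v\rangle$ exactly when $A_{S_v}\,c=0$; hence $\dim\bar U_v=\mathrm{rank}(A_{S_v})$, and $v\in U_v$ forces $A_{S_v}$ to be degenerate. Consequently, when $A_{S_v}$ is nondegenerate (the elliptic and loxodromic cases) the poles $\{b_s\}$ are linearly independent and $v\notin U_v$, so $\Pi_v=\S(U_v)$ is a genuine hyperplane with $v\notin\Pi_v$; moreover central projection from $v$ identifies $\Pi_v$ with $\S(V/\langle v\rangle)$ and carries $P\cap\Pi_v$ onto the link $P_v$. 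When $A_{S_v}$ is degenerate (the parabolic case), either $\dim\Pi_v\leqslant d-2$ or $v\in\Pi_v$; in both subcases the relative interior of an edge at $v$ cannot meet $\Pi_v$ in a single point, so $v$ is not truncatable. This already discards the parabolic case, and it remains to separate elliptic from loxodromic, where $\Pi_v$ is a hyperplane in either case.

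Next I would use the position of $v$ to decide the edge condition. Fix a functional $\eta_v$ with $\ker\eta_v=\Pi_v$ and $\eta_v(v)>0$; then $v$ is truncatable precisely when, along each edge at $v$, $\eta_v$ becomes negative before the opposite vertex, i.e. $\Pi_v$ meets the relative interior of every edge. By Theorem~\ref{theo_vinberg}.(5) one has $v\in\Omega_P$ iff $W_{S_v}$ is spherical, i.e. iff $P_v$ is elliptic; for an elliptic link $v$ is an interior vertex and the corner of $P$ at $v$ lies in $\overline{\Omega_P}$, so $\Pi_v$ lies beyond the edges and does not cross them forward, whence $v$ is not truncatable. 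For a loxodromic link $v\notin\overline{\Omega_P}$ (it is hyperideal), and since $P$ is $2$-perfect every edge at $v$ meets the properly convex domain $\Omega_P$; using that $P\cap\Pi_v\cong P_v$ realizes the link as a properly convex transversal slice, a separation argument should give $\eta_v>0$ at $v$ but $\eta_v<0$ at the $\Omega_P$-points of each edge, so that $\Pi_v$ crosses every edge in its relative interior and $v$ is truncatable. \textbf{The hard part} is exactly this forward-crossing in the loxodromic case for a general, non-hyperbolic $P$, where there is no single invariant quadratic form to appeal to. In the symmetric Tits representative the picture is transparent: $V$ carries an invariant form of signature $(d,1)$, a loxodromic link makes $v$ spacelike, the poles $b_s$ are $v$-orthogonal, so $\Pi_v$ is the polar hyperplane $v^{\perp}$, which meets $\mathbb{H}^d$ and cuts every edge in its interior (the classical hyperbolic truncation). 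For general $P$ I would either run the convexity/separation argument directly for $\Omega_P$, or deform to this hyperbolic representative inside the deformation family of the link—connected by Theorem~\ref{t:moduli_simplex}—and verify that truncatability is open and closed along the deformation.

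Finally, assuming all $P_v$ $(v\in\V)$ loxodromic, each $v$ is truncatable, and I would establish the disjointness of the new facets as follows. The slice $P\cap\Pi_v$ lies in the cap $P\cap\{\eta_v\geqslant 0\}$, a neighbourhood of $v$ meeting the $1$-skeleton only in the initial portions of the edges at $v$. For $v\neq w$ in $\V$, proper convexity of $\Omega_P$ together with $2$-perfectness forces these caps to be disjoint: on the edge joining two hyperideal vertices the two crossing points are separated by an interior point of $\Omega_P$, and no point of $P$ can lie in the interior caps of two distinct vertices at once. Hence $P\cap\Pi_v\cap\Pi_w=\varnothing$, which yields the remaining implication and completes the proof.
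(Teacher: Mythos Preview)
The paper does not supply its own proof of this theorem: it is quoted verbatim from \cite[Prop.\,4.14 \& Lem.\,4.17]{Marquis:2014aa} and used as a black box. So there is no in-paper argument to compare your proposal against.

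That said, your outline is broadly sound, and the Cartan-matrix rank computation (showing that $\Pi_v$ is a hyperplane avoiding $v$ exactly when $A_{S_v}$ is nondegenerate, i.e.\ in the elliptic and loxodromic cases, and fails in the parabolic case) is correct and is indeed the natural first step. Your dismissal of the elliptic case is also right in spirit: the Perron--Frobenius eigenvector for $A_{S_v}$ has positive entries with \emph{positive} eigenvalue, so the combination $\sum c_s b_s\in\Pi_v$ lies on the side $\alpha_t>0$ of every facet through $v$, hence $\Pi_v$ misses a neighbourhood of $v$ in $P$ and cannot cross the edges forward.

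The genuine gap is in the loxodromic direction, which you yourself flag. Your fallback proposal---deform to the hyperbolic representative via Theorem~\ref{t:moduli_simplex}---does not work as stated: that theorem parametrises Coxeter \emph{simplices} realising the \emph{link}, not Coxeter polytopes realising $P$, and changing the link does not move $P$. There is no reason the ambient polytope $P$ sits in a family connecting it to a hyperbolic one. The correct route (and the one taken in \cite{Marquis:2014aa}) is the direct convexity argument you only sketch: in the loxodromic case the Perron--Frobenius eigenvector has positive entries with \emph{negative} eigenvalue, so $\sum c_s b_s\in\Pi_v$ satisfies $\alpha_t(\cdot)<0$ for all $t\in S_v$, placing a point of $\Pi_v$ strictly inside the corner cone of $P$ at $v$; combined with $v\notin\Pi_v$ this forces $\Pi_v$ to separate $v$ from the interior of $P$ and hence to meet the relative interior of every edge at $v$. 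Making that separation precise, and likewise the disjointness $P\cap\Pi_v\cap\Pi_w=\varnothing$ (which in \cite{Marquis:2014aa} is Lemma~4.17 and uses that both slices lie in $\overline{\Omega_P}$ while the segment between two such hyperideal vertices must exit $\overline{\Omega_P}$), is exactly the content you would need to import from the cited reference.
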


\begin{rem}
Let $\GG$ be a labeled polytope. A vertex $v$ of $\GG$ is “{\em something}” if the link $\GG_v$ or its Coxeter group $W_{\GG_v}$ is “something”. For example, the word “something” can be replaced by “Lannér”, “$\tilde{A}$” and so on.
\end{rem}

Recall that if $W$ is of cycle type and $\mathcal{D}_W$ has no edge of label $\infty$, then $W$ has a unique pair $(\mathcal{C}, \overline{\mathcal{C}})$ of relevant circuits. The following is a consequence of Remark \ref{remark:loxo_imply} and Theorem \ref{thm:truncation}.

\begin{cor}\label{cor:truncatable}
Let $\GG$ be an irreducible, large, 2-perfect labeled simple polytope of dimension $d \geqslant 4$, and let $v$ be a vertex of $\GG$. Assume that $[P] \in \B(\GG)$. Then:
\begin{itemize}
\item if $v$ is a truncatable vertex of $P$, then $v$ is Lannér or $\tilde{A}_{d-1}$;
\item if $v$ is Lannér, then $v$ is a truncatable vertex of $P$;
\item if $v$ is $\tilde{A}_{d-1}$, then $v$ is a truncatable vertex of $P$ if and only if the normalized cyclic product $R_{\mathcal{C}_v}(A_P) \neq 0$, where $\mathcal{C}_v$ is a relevant circuit of $W_{\GG_v}$.
\end{itemize}
\end{cor}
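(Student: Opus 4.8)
The plan is to reduce everything to the single equivalence supplied by Theorem \ref{thm:truncation}---that $v$ is a truncatable vertex of $P$ if and only if the link $P_v$ is loxodromic---and then to translate "$P_v$ loxodromic'' into a statement about the Coxeter group $W_{\GG_v}$ using the classification in Theorem \ref{tri} together with Remark \ref{remark:loxo_imply}.

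First I would verify the hypotheses of Theorem \ref{thm:truncation}. Since $\GG$ is irreducible and large and $[P] \in \B(\GG)$, Theorem \ref{tri}.(5) leaves only two possibilities: either $P$ is irreducible and loxodromic, or $P = Q \otimes \cdot\,$ with $Q$ irreducible and loxodromic. The second case cannot occur here, because joining with a point adjoins an isolated $A_1$ node to the Coxeter diagram, making $W_P = W_{\GG}$ reducible and contradicting the irreducibility of $\GG$. Hence $P$ is irreducible and loxodromic. Moreover $P$ is $2$-perfect by Remark \ref{rem:independance}, and $v$ is a simple vertex because $\GG$ is simple, so $\{v\}$ satisfies the hypotheses of Theorem \ref{thm:truncation}. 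This yields the key equivalence: $v$ is truncatable in $P$ if and only if $P_v$ is loxodromic.

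Next I would record that $P_v$ is a \emph{perfect} Coxeter $(d-1)$-simplex---a simplex since $v$ is simple, and perfect since $P$ is $2$-perfect (this is exactly the link characterization of $2$-perfectness in Section \ref{subsection:2-perfect}). For the first bullet, if $v$ is truncatable then $P_v$ is loxodromic, so Remark \ref{remark:loxo_imply} forces $W_{P_v} = W_{\GG_v}$ to be Lannér or $\tilde{A}_{d-1}$, i.e. $v$ is Lannér or $\tilde{A}_{d-1}$. For the second bullet, suppose $v$ is Lannér. Then $W_{\GG_v}$ is large while $\GG_v$ is perfect, so the final clause of Theorem \ref{tri} tells us $[P_v]$ is elliptic, parabolic, or loxodromic; as a Lannér group is neither spherical nor affine, $P_v$ can be neither elliptic nor parabolic (Definition \ref{def_3types}), hence is loxodromic, and $v$ is truncatable.

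Finally, for the third bullet, suppose $v$ is $\tilde{A}_{d-1}$. Applying Theorem \ref{tri}.(2) to the perfect labeled $(d-1)$-simplex $\GG_v$, the class $[P_v]$ is either parabolic or loxodromic, and by Remark \ref{rem:parabolic_inv=0} it is parabolic precisely when the normalized cyclic product $R_{\mathcal{C}_v}(A_{P_v})$ of the unique relevant circuit vanishes. It then remains to replace $A_{P_v}$ by $A_P$: every facet $s \in S_v$ contains $v$, so $\alpha_s$ vanishes on $v$, descends to $\Quotient{\R^{d+1}}{\langle v\rangle}$, and the Cartan entries of the link satisfy $(A_{P_v})_{st} = \alpha_s(b_t) = (A_P)_{st}$ for all $s,t \in S_v$. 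Since $\mathcal{C}_v$ only uses indices in $S_v$, this gives $R_{\mathcal{C}_v}(A_{P_v}) = R_{\mathcal{C}_v}(A_P)$, whence $v$ is truncatable $\iff$ $P_v$ is loxodromic $\iff$ $R_{\mathcal{C}_v}(A_P) \neq 0$. The only genuinely delicate point is this last identification of invariants: checking that the link's Cartan entries coincide verbatim with those of $P$ on $S_v$ (so that the circuit invariant may be evaluated in the ambient matrix $A_P$), and noting, as in the preamble, that $\tilde{A}_{d-1}$ has a cycle diagram with no edge of label $\infty$, so that $\mathcal{C}_v$ is unique up to orientation and the condition $R_{\mathcal{C}_v}(A_P) \neq 0$ is well posed.
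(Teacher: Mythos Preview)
Your proof is correct and follows precisely the approach the paper sketches: the paper only says the corollary ``is a consequence of Remark~\ref{remark:loxo_imply} and Theorem~\ref{thm:truncation}'', and you have expanded this using Theorem~\ref{tri} and Remark~\ref{rem:parabolic_inv=0} in the natural way, including the identification $(A_{P_v})_{st}=(A_P)_{st}$ for $s,t\in S_v$ needed to read the invariant in $A_P$. One small refinement: in the second bullet, rather than appealing to Definition~\ref{def_3types} for ``not spherical/affine $\Rightarrow$ not elliptic/parabolic'', it is cleaner to invoke case~(5) of Theorem~\ref{tri} directly, which already forces $P_v$ to be loxodromic once $W_{\GG_v}$ is large and irreducible.
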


The following is now immediate:

\begin{cor}\label{cor:moduli_truncation}
Let $\GG$ be an irreducible, large, 2-perfect labeled simple polytope of dimension $d \geqslant 4$, $\V_{A}$ the set of all $\tilde{A}_{d-1}$ vertices of $\GG$, and $\V$ a set of some Lann\'{e}r or $\tilde{A}_{d-1}$ vertices of $\GG$. Define  
$$
\B(\GG)^{\dagger \V} = \{ [P] \in \B(\GG) \mid R_{\C_v}(A_P) \neq 0 \, \textrm{ for each } v \in \V \cap \V_{A}  \}.
$$
Then the map $\B(\GG)^{\dagger \V} \rightarrow \B(\GG^{\dagger \V})$ induced by the truncation is a homeomorphism. In particular, if the link $\GG_v$ is Lannér for each $v \in \V$, then $\B(\GG)$ is homeomorphic to $\B(\GG^{\dagger \V})$ .
\end{cor}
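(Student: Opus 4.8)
The plan is to exhibit truncation and its inverse (``untruncation'') as mutually inverse homeomorphisms, reducing everything to a single vertex. Since on $\B(\GG)^{\dagger\V}$ the set $\V$ will turn out to be truncatable, the new facets are pairwise disjoint, so the truncations at the various $v\in\V$ commute and can be performed one at a time; by induction it then suffices to treat $\V=\{v\}$, provided we check that truncation preserves the standing hypotheses and the types of the remaining vertices. This holds because truncation is a local operation: it leaves the old facets and all their mutual Cartan entries unchanged, so each remaining $\C_{w}$ (a circuit among the facets of $S_{w}$) keeps its cyclic product; $2$-perfectness is preserved by Remark \ref{rem:2perfectAfterTruncation}, simplicity is clear, and the diagram stays connected and infinite non-(virtually-abelian), hence large by the trichotomy.

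First I would set up the forward map. For $[P]\in\B(\GG)^{\dagger\{v\}}$, Corollary \ref{cor:truncatable} shows that $v$ is a truncatable vertex of $P$: automatically if $v$ is Lannér, and precisely under the condition $R_{\C_v}(A_P)\neq 0$ cut out in the definition of $\B(\GG)^{\dagger\V}$ if $v$ is $\tilde{A}_{d-1}$. Since $\GG$ is irreducible, Theorem \ref{tri} forces each such $P$ to be irreducible and loxodromic, so Theorem \ref{thm:truncation} applies and $\{v\}$ is truncatable; the polytope $P^{\dagger v}$ then realizes $\GG^{\dagger v}$, which is again $2$-perfect by Remark \ref{rem:2perfectAfterTruncation}. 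This defines a map $\tau\colon [P]\mapsto[P^{\dagger v}]$ from $\B(\GG)^{\dagger\{v\}}$ to $\B(\GG^{\dagger v})$.

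The core step is bijectivity, phrased through Cartan matrices and cyclic products. The Coxeter diagram of $\GG^{\dagger v}$ is that of $\GG$ with one extra node $s$ for the new facet, joined to the $d$ facets of $S_v$ by right angles. The crucial observation is that in \emph{any} Coxeter polytope $Q$ realizing $\GG^{\dagger v}$ the right-angle relations $\alpha_r(b_s)=0$ for all $r\in S_v$ force the pole $b_s$ of the new facet to be the single intersection point $\bigcap_{r\in S_v}\S(\ker\alpha_r)$ of the old supporting hyperplanes (a point since the simple vertex $v$ has a simplex link), and dually $\ker\alpha_s=\Pi_v$; hence every entry $A_{st}=\alpha_s(b_t)$ and $A_{ts}=\alpha_t(b_s)$ involving $s$ is determined by the data of the old facets alone. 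Consequently the cyclic products of all relevant circuits through $s$ are functions of the cyclic products among the old facets, and the latter coincide verbatim with the relevant cyclic products of $\GG$. By Theorem \ref{inj} a point of either deformation space is pinned down by these common coordinates, so $\tau$ preserves them and is injective; for surjectivity I would build the inverse by deleting the row and column of $s$ from $A_Q$ and invoking Theorem \ref{surj} to realize the resulting irreducible Cartan matrix as a Coxeter polytope $P$ with $[P^{\dagger v}]=[Q]$.

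Finally, continuity in both directions is immediate once bijectivity is organized this way, since $\tau$ and $\tau^{-1}$ act as the identity on the shared cyclic-product coordinates of Theorem \ref{t:moduli_simplex}; thus $\tau$ is a homeomorphism onto $\B(\GG^{\dagger v})$, and the restriction to $\{R_{\C_v}\neq 0\}$ matches the whole target because the mere existence of the new facet in any realization of $\GG^{\dagger v}$ forces the link at $v$ to be loxodromic, i.e. $R_{\C_v}\neq 0$. The ``in particular'' statement then follows by taking $\V$ to consist of Lannér vertices only: then $\V\cap\V_A=\varnothing$, so $\B(\GG)^{\dagger\V}=\B(\GG)$ and the homeomorphism reads $\B(\GG)\cong\B(\GG^{\dagger\V})$. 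I expect the main obstacle to be the surjectivity/untruncation step: one must verify that the Cartan submatrix obtained by deleting $s$ is genuinely of negative type and of rank $d+1$ (so Theorem \ref{surj} yields a loxodromic $P$), and that the extended old facets close up combinatorially to a vertex of the prescribed type, i.e. that untruncation is well defined and is a two-sided inverse of truncation.
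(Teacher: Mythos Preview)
The paper gives no proof beyond declaring the corollary ``immediate'' from Corollary~\ref{cor:truncatable} (and implicitly Theorem~\ref{thm:truncation}), so your argument is considerably more detailed than anything the paper supplies, and its overall strategy is sound. A few remarks are in order.

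First, the induction on $|\V|$ is valid but unnecessary: Theorem~\ref{thm:truncation} already treats an arbitrary finite set $\V$ at once and certifies that the new facets are pairwise disjoint, so you can truncate all of $\V$ in a single step. If you do keep the induction, your claim that $\GG^{\dagger v}$ remains irreducible is correct but for a reason worth spelling out: the new node $s$ is joined by an $\infty$-edge to every old facet $t\notin S_v$ (since $s$ and $t$ are non-adjacent in the polytope), so the Coxeter diagram stays connected through those edges; it is not that the old diagram is simply preserved with an isolated node appended.

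Second, the surjectivity obstacle you flag is genuine but easily overcome, and the geometric route is cleaner than appealing to Theorem~\ref{surj} on the Cartan submatrix. Given $[Q]\in\B(\GG^{\dagger v})$, set $\tilde P=\bigcap_{r\in S}\S(\{\alpha_r\leqslant 0\})$ using the data of $Q$. The right-angle relations give $\alpha_r(b_s)=0$ for $r\in S_v$, while non-adjacency of $s$ and $t\notin S_v$ forces $\alpha_t(b_s)\alpha_s(b_t)\geqslant 4$, hence $\alpha_t(b_s)<0$; thus $[b_s]\in\tilde P$ is a simple vertex lying on exactly the facets in $S_v$. Since the restrictions $\alpha_r|_{\ker\alpha_s}$ ($r\in S_v$) are independent (they cut out the simplex $s$), the family $(\alpha_r)_{r\in S}$ spans $V^*$, so $\tilde P$ is properly convex; combinatorially $\tilde P$ is $Q$ with the pyramid $\mathrm{conv}(s,[b_s])$ reattached, hence realizes $\GG$, and $\tilde P^{\dagger v}=Q$ by construction. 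This also shows the link at $v$ is loxodromic, so $[\tilde P]\in\B(\GG)^{\dagger\{v\}}$, completing surjectivity. Finally, your appeal to Theorem~\ref{t:moduli_simplex} for continuity is slightly misplaced (that theorem concerns simplices); continuity in both directions follows directly from the Cartan-matrix description of $\tau$ and $\tau^{-1}$, or equivalently from the fact that both maps are restrictions of polynomial maps on the space of reflection data.
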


Finally, we parametrize the deformation spaces of \emph{once-truncated simplices}, which are polytopes obtained from a simplex $\Ss$ by truncating each vertex of $\Ss$ at most once.

\begin{prop}\label{prop:deformation_block}
Let $\Ss$ be an irreducible, large, 2-perfect labeled simplex of dimension $d \geqslant 4$ and $W$ its Coxeter group. Let $\mathcal{V}_{LA}$ be the set of Lann\'{e}r or $\tilde{A}_{d-1}$ vertices of $\Ss$, and $\mathcal{V}$ a subset of $\mathcal{V}_{LA}$. Then:
\begin{itemize}
\item if $W$ is the left Coxeter group in Figure \ref{fig:exampleA} and $\mathcal{V} = \mathcal{V}_{LA}$, then $\B(\Ss^{\dagger \mathcal{V}})$ is the union of six open cells of dimension $b(\Ss)=2$.
\item otherwise, $\B(\Ss^{\dagger \mathcal{V}})$ is the union of $2^{k_A}$ open cells of dimension $b(\Ss) \in\{ 0,1,2 \}$, where $k_A$ is the number of $\tilde{A}_{d-1}$ vertices in $\mathcal{V}$.
\end{itemize}
In particular, $\B(\Ss^{\dagger \mathcal{V}})$ is non-empty.
\end{prop}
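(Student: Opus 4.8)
The plan is to combine the explicit parametrization of $\B(\Ss)$ from Theorem \ref{t:moduli_simplex} with the truncation homeomorphism of Corollary \ref{cor:moduli_truncation}, and then to count the open cells cut out by the relevant zero loci. By Lemma \ref{prop:label_simplex} the group $W$ is $2$-Lannér of rank $d+1 \geqslant 5$, so Theorem \ref{t:maxwell} tells us that the underlying graph $U_W$ is of type tree, cycle, pan or $K_{2,3}$, and Theorem \ref{t:moduli_simplex} identifies $\B(\Ss)$ with an open cell $\R^{b(\Ss)}$, $b(\Ss) \in \{0,1,2\}$, whose coordinates are the normalized cyclic products $R_{\C_i}$ of the relevant circuits of $W$. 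By Corollary \ref{cor:moduli_truncation}, $\B(\Ss^{\dagger \V})$ is homeomorphic to $\B(\Ss)^{\dagger \V} = \{ [P] \in \B(\Ss) \mid R_{\C_v}(A_P) \neq 0 \textrm{ for each } v \in \V \cap \V_A \}$, where $\V_A$ is the set of $\tilde{A}_{d-1}$ vertices. Since the Lannér vertices of $\V$ impose no condition, being always truncatable by Corollary \ref{cor:truncatable}, only the truncated $\tilde{A}_{d-1}$ vertices matter, and the statement reduces to understanding the zero loci $\{ R_{\C_v} = 0 \}$ inside the cell $\B(\Ss)$.

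First I would identify the $\tilde{A}_{d-1}$ vertices combinatorially. A vertex $v$ is $\tilde{A}_{d-1}$ precisely when the link $\Ss_v \cong \Ss_{W_{S \smallsetminus \{ s_v \}}}$ is a $d$-cycle carrying the affine labelling, \ie $U_W - s_v$ is an unlabelled $d$-cycle. Deleting a node from a graph produces a single $d$-cycle only in restricted situations, so I would go through the four types: in the tree and cycle cases $U_W - s_v$ is a forest or a path, whence $\V_A = \varnothing$ and $k_A = 0$; in the pan case $U_W - s_v$ is a $d$-cycle only if $v$ is a degree-one vertex whose deletion leaves the entire cycle, which forces $U_W$ to be a $d$-cycle with one pendant node, and then $v$ is $\tilde{A}_{d-1}$ iff that cycle is affine, so $\# \V_A \leqslant 1$; in the $K_{2,3}$ case (where necessarily $d = 4$) deleting a node from the $2$-side yields the star $K_{1,3}$, while deleting a node $y_j$ from the $3$-side yields the $4$-cycle $K_{2,2}$ on the remaining four nodes, so the $\tilde{A}_3$ vertices lie among $y_1, y_2, y_3$ and $\# \V_A \leqslant 3$.

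Next I would match each function $R_{\C_v}$ with a coordinate of $\B(\Ss)$. The relevant circuit $\C_v$ of the link is a cycle of $U_W$, and since the Cartan matrix of the link is the restriction of $A_P$, the normalized cyclic product $R_{\C_v}$ computed in $\Ss_v$ agrees with the corresponding normalized cyclic product of $W$. In the pan case the unique cycle of $U_W$ is exactly the circuit of the link at the pendant, so $\{ R_{\C_v} = 0 \}$ is the single point $\{ R = 0 \}$ of $\B(\Ss) \cong \R$. In the $K_{2,3}$ case the three relevant circuits $\C_1, \C_2, \C_3$ used in Theorem \ref{t:moduli_simplex} are exactly the three $4$-cycles $K_{2,2}$, each obtained by deleting one $y_j$; hence the hypersurface cut out by truncating $y_j$ is one of the three coordinate lines of $H = \{ x_1 + x_2 + x_3 = 0 \} \cong \R^2$, all passing through the origin $[\Delta_W]$.

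Finally I would count the components of the complement of this central arrangement. When $b(\Ss) \leqslant 1$ we have $k_A \leqslant 1$, and removing $k_A$ hyperplanes from $\R^{b(\Ss)}$ leaves $2^{k_A}$ open cells. When $b(\Ss) = 2$, the $k_A$ distinct lines through $[\Delta_W]$ in $H \cong \R^2$ bound $\max(1, 2k_A)$ open sectors, which equals $2^{k_A}$ precisely for $k_A \leqslant 2$, whereas for $k_A = 3$ one gets $6$ rather than $2^3 = 8$. The case $k_A = 3$ occurs exactly when all three $y_j$-links are affine and all three vertices are truncated, \ie for the left Coxeter group in Figure \ref{fig:exampleA} with $\V = \V_{LA}$, and yields the six cells of the first bullet; in every other case we obtain $2^{k_A}$ cells. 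Non-emptiness is automatic, since the complement of a finite central arrangement in a nonempty cell is a nonempty union of open cells. The main obstacle is the bookkeeping of the two identification steps — pinning down which vertex links are $\tilde{A}_{d-1}$ and checking that the truncation functions $R_{\C_v}$ are precisely the parametrizing coordinates — after which the cell count is the elementary observation that three concurrent lines in the plane bound six sectors, not eight.
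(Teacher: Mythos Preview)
Your argument is correct and follows the same strategy as the paper's proof: combine the explicit parametrization of $\B(\Ss)$ from Theorem \ref{t:moduli_simplex} with the truncation homeomorphism of Corollary \ref{cor:moduli_truncation}, and then count the open cells in the complement of the hyperplanes $\{R_{\C_v}=0\}$. The paper's version is much terser --- it simply cites those two results and exhibits the explicit descriptions $\{x_1+x_2+x_3=0,\ x_i\neq 0\}$ for the two $K_{2,3}$ examples of Figure \ref{fig:exampleA} --- whereas you spell out the type-by-type identification of the $\tilde{A}_{d-1}$ vertices and the matching of each $R_{\C_v}$ with a coordinate, but the content is the same.
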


\begin{proof}
Assume that $W$ is the left Coxeter group in Figure \ref{fig:exampleA} and $\mathcal{V} = \mathcal{V}_{LA}$. By (the proof of) Theorem \ref{t:moduli_simplex} and Corollary \ref{cor:moduli_truncation}, the space $\B(\Ss^{\dagger \mathcal{V}})$ is homeomorphic to
$$
\{ (x_1, x_2, x_3) \in \R^3 \mid x_1 + x_2 + x_3 = 0 \textrm{ and } x_1, x_2, x_3 \neq 0 \}.
$$
Thus it is the union of six open cells of dimension $2$. The proof of the other cases also follows from Theorem \ref{t:moduli_simplex} and Corollary \ref{cor:moduli_truncation}. For example, if $W$ is the right Coxeter group in Figure \ref{fig:exampleA} and $\mathcal{V} = \mathcal{V}_{LA}$, then $\B(\Ss^{\dagger \mathcal{V}})$ is homeomorphic to
$$
\{ (x_1, x_2, x_3) \in \R^3 \mid x_1 + x_2 + x_3 = 0 \textrm{ and } x_1 \neq 0 \},
$$
which is the union of two open cells of dimension $2$.
\end{proof}

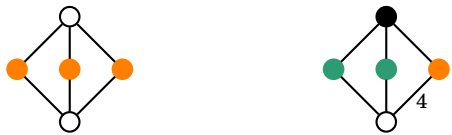
\begin{figure}[ht!]
\begin{minipage}{0.2\textwidth}
\centering
\begin{tikzpicture}[thick,scale=0.7, every node/.style={transform shape}] 
\node[draw=orange,circle,fill=orange] (G1) at (0:1){};
\node[draw,circle] (G2) at (90:1) {};
\node[draw=orange,circle,fill=orange] (G3) at (180:1) {}; 
\node[draw,circle] (G4) at (-90:1) {};
\node[draw=orange,circle,fill=orange] (G5) at (0,0) {};
\draw (G1) -- (G2)  node[above,midway] {};
\draw (G2) -- (G3)  node[above,midway] {};
\draw (G3) -- (G4) node[above,midway] {};
\draw (G4) -- (G1) node[above,midway] {};
\draw (G4) -- (G5) node[above,midway] {};
\draw (G5) -- (G2) node[above,midway] {};
\end{tikzpicture}
\end{minipage}
\qquad
\begin{minipage}{0.2\textwidth}
\centering
\begin{tikzpicture}[thick,scale=0.7, every node/.style={transform shape}] 
\node[draw=orange,circle,fill=orange] (G1) at (0:1){};
\node[draw,circle,fill=black] (G2) at (90:1) {};
\node[draw=green_okabe_ito,circle,fill=green_okabe_ito] (G3) at (180:1) {}; 
\node[draw,circle] (G4) at (-90:1) {};
\node[draw=green_okabe_ito,circle,fill=green_okabe_ito] (G5) at (0,0) {};
\draw (G1) -- (G2)  node[above,midway] {};
\draw (G2) -- (G3)  node[above,midway] {};
\draw (G3) -- (G4) node[above,midway] {};
\draw (G4) -- (G1) node[below,near end] {$4$};
\draw (G4) -- (G5) node[above,midway] {};
\draw (G5) -- (G2) node[above,midway] {};
\end{tikzpicture}
\end{minipage}
\caption{Examples of 2-Lann\'{e}r Coxeter groups of type $K_{2,3}$}
\label{fig:exampleA}
\end{figure}

\section{Gluing and splitting theorem}\label{section:gluing_splitting}

\subsection{Definitions}\label{subsec:gluing:definition}

Let $\GG_1$ and $\GG_2$ be two combinatorial polytopes. For each $i \in \{1, 2\}$, let $v_i$ be a vertex of $\GG_i$, and $({\GG}_{i})_{v_i}$ the link of $\GG_i$ at $v_i$. Given an isomorphism $\phi$ between $({\GG}_{1})_{v_1}$ and $({\GG}_{2})_{v_2}$, we define the \emph{gluing of $\GG_1^{\dagger v_1}$ and $\GG_2^{\dagger v_2}$ via $\phi$} as follows (see Figure \ref{fig:gluing}): 
\begin{itemize}
\item take the dual polytope $\GG^*_i$ of $\GG_i$ for each $i \in \{1,2\}$;
\item glue $\GG^*_1$ and $\GG^*_2$ using the induced isomorphism $\phi^*$ of $\phi$ between the dual facets of $v_1$ and $v_2$ to obtain a new polytope $\GG^* = \GG^*_1 \cup_{\phi^*} \GG^*_2$;
\item take the dual polytope $\GG$ of $\GG^*$.
\end{itemize}

\begin{figure}[ht!]
\centering
\begin{tabular}{>{\centering\arraybackslash}m{.3\textwidth} c >{\centering\arraybackslash}m{.3\textwidth} }
\includegraphics[scale=.7]{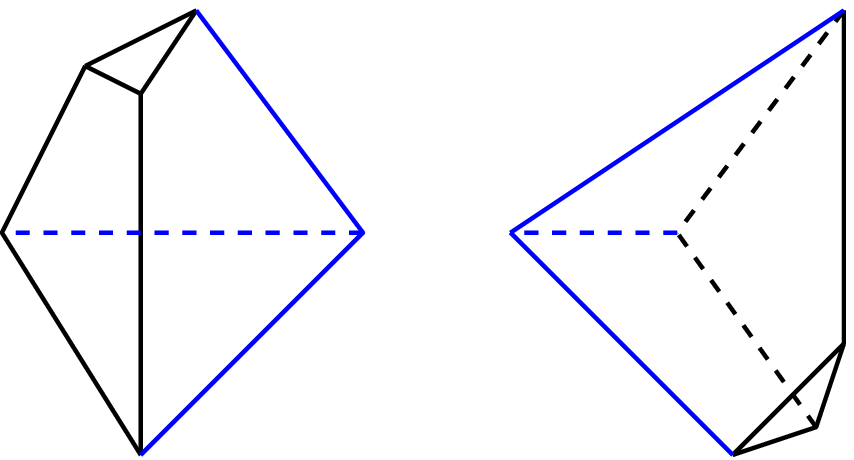}
\small
\put (-161, 5){$\GG_1$}
\put (-47, 5){$\GG_2$}
\put (-96,43){$v_1$}
\put (-80,43){$v_2$}
\put (70, 5){$\GG$}
&
\quad \quad \quad \quad $\rightsquigarrow$ 
&
\includegraphics[scale=.7]{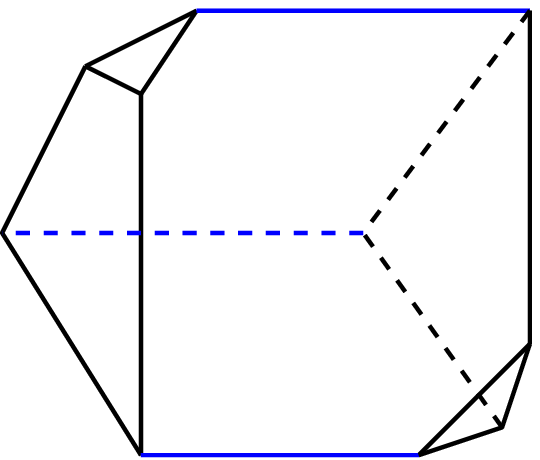} 
\\
\end{tabular}
\caption{Gluing two polytopes $\GG_1^{\dagger v_1}$ and $\GG_2^{\dagger v_2}$ to obtain $\GG$}
\label{fig:gluing}
\end{figure}

We say that $\GG$ is obtained by \emph{gluing $\GG_1^{\dagger v_1}$ and $\GG_2^{\dagger v_2}$ via $\phi$}, and we denote it by $\GG_1^{\dagger v_1} \sharp_{\phi} \GG_2^{\dagger v_2}$. Note that if $\GG_1$ and $\GG_2$ are truncation polytopes, then the dual facet of $v_i$ in the staked polytope $\GG^*_i$ become an interior face of codimension 1 in the triangulation of $\GG^*$. 

\begin{rem}
More directly, we may construct the polytope $\GG$ by gluing the truncated polytopes $\GG_i^{\dagger v_i}$ of $\GG_i$ at $v_i$ via the induced isomorphism of $\phi$ between the new facets of $\GG_i^{\dagger v_i}$ so that each old facet of $\GG_1^{\dagger v_1}$ formerly containing $v_1$ amalgamates with the corresponding old facet of $\GG_2^{\dagger v_2}$ formerly containing $v_2$. 
\end{rem}

Let $\GG$ be a combinatorial $d$-polytope, and let $\psi$ be the dual isomorphism between $\GG$ and $\GG^*$. A {\em prismatic poset} of $\GG$ is a subposet $\hat{\delta}$ of the face poset $\mathcal{F}(\GG)$ of $\GG$ such that:
\begin{itemize}
\item the subposet $\psi(\hat{\delta})$ of $\mathcal{F}(\GG^*)$ is isomorphic to the face poset of the boundary of $(d-1)$-simplex;
\item there is no $f \in \mathcal{F}(\GG^*)$ such that the poset $\psi(\hat{\delta}) \cup \{ f \}$ is isomorphic to the face poset of $(d-1)$-simplex.
\end{itemize}

The set $\delta$ of facets in a prismatic poset is called a \emph{prismatic circuit}. In more geometric way, $\delta$ consists of exactly $d$ facets of $\GG$ such that: 
\begin{itemize}
\item the convex hull $\Delta_\delta$ of the dual vertices of $\delta$ in $\GG^*$ is a $(d-1)$-simplex; 
\item the relative interior of $\Delta_\delta$ lies in the interior of $\GG^*$ (see Figure \ref{fig:prismatic}). 
\end{itemize}

\begin{figure}[ht!]
\centering
\begin{tabular}{>{\centering\arraybackslash}m{.3\textwidth} c >{\centering\arraybackslash}m{.3\textwidth}}
\includegraphics[scale=.7]{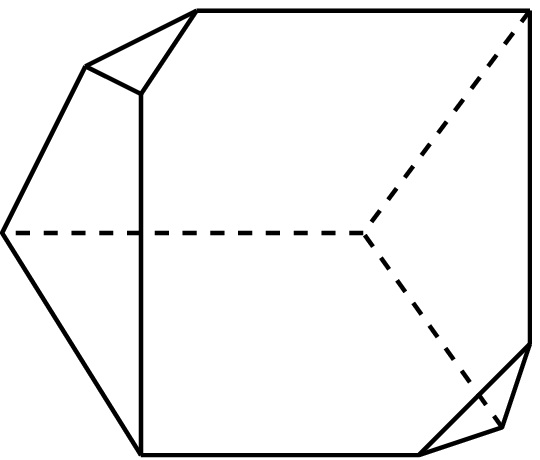}
&
$\rightsquigarrow $
&
\includegraphics[scale=.7]{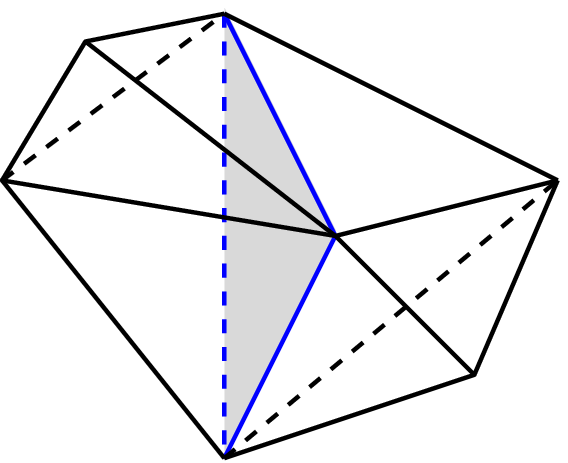}
\small
\put (-277, 10){$\GG$}
\put (-100, 10){$\GG^*$}
\put (-65, 37){\textcolor{blue}{$\Delta_\delta$}}
\\
\end{tabular}
\caption{A prismatic circuit of a polytope $\GG$}
\label{fig:prismatic}
\end{figure}

\begin{rem}
If $\GG$ is a truncation polytope, then the prismatic circuits of $\GG$ are in correspondence with the interior faces of codimension 1 in  the triangulation of $\GG^*$.
\end{rem}

Now, given a prismatic circuit $\delta$ of $\GG$, we define the \emph{splitting of $\GG$ along $\delta$} as follows:
\begin{enumerate}
\item take the dual polytope $\GG^*$ of $\GG$;
\item using the convex hull $\Delta_\delta$ in $\GG^*$, we decompose $\GG^*$ into two polytopes $\GG_1^*$ and $\GG_2^*$ with the new facets $\Delta_{\delta, 1}$ and $\Delta_{\delta, 2}$ corresponding to $\Delta_\delta$ respectively (see Figure \ref{fig:splitting});
\item take the dual polytope $\GG_i$ of $\GG_i^*$ and truncate $\GG_i$ at the vertex $v_i$ dual to $\Delta_{\delta, i}$, for each $i \in \{ 1,2 \}$.
\end{enumerate}

\begin{figure}[ht!]
\centering
\begin{tabular}{>{\centering\arraybackslash}m{.3\textwidth} c >{\centering\arraybackslash}m{.3\textwidth}}
\includegraphics[scale=.7]{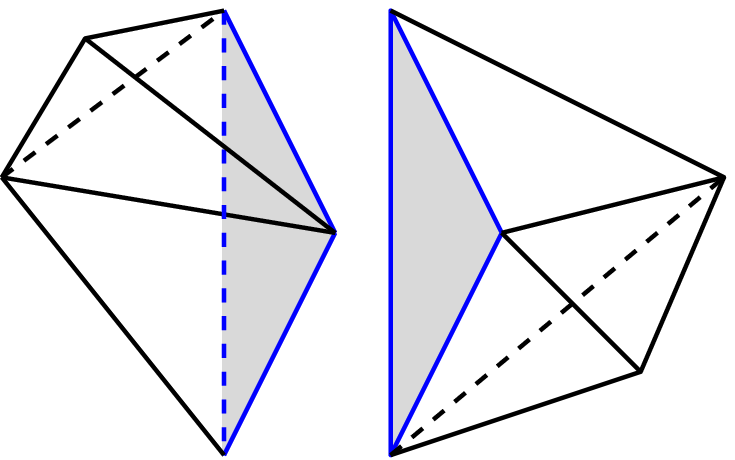}
\small
\put (-130, 5){$\GG_1^*$}
\put (-25, 5){$\GG_2^*$}
\put (52, 5){$\GG_1^{\dagger v_1}$}
\put (164, 5){$\GG_2^{\dagger v_2}$}
\put (124,43){$v_1$}
\put (138,43){$v_2$}
\put (-102, 40){\textcolor{blue}{$\Delta_{\delta,1}$}}
\put (-68, 41){\textcolor{blue}{$\Delta_{\delta,2}$}}
&
\quad $\rightsquigarrow$ \quad \quad
&
\includegraphics[scale=.7]{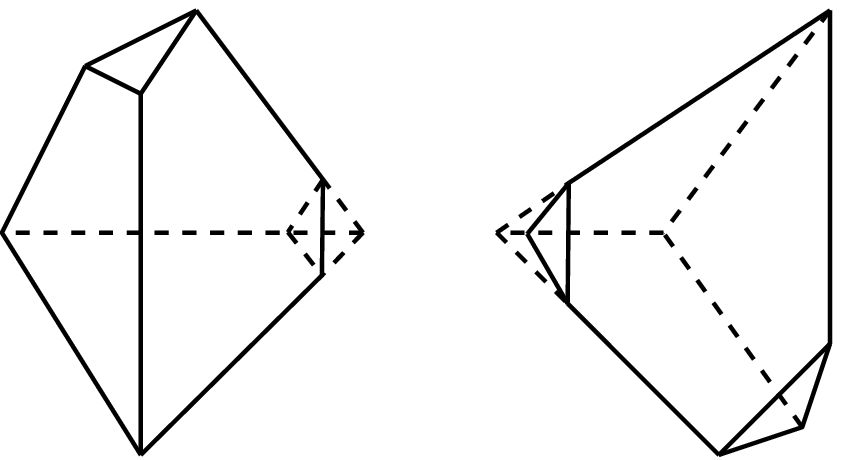}
\\
\end{tabular}
\caption{The splitting of $\GG$ into $\GG_1^{\dagger v_1}$ and $\GG_2^{\dagger v_2}$}
\label{fig:splitting}
\end{figure}

The original polytope $\GG$ is the gluing of $\GG_1^{\dagger v_1}$ and $\GG_2^{\dagger v_2}$ via the obvious isomorphism between the links $(\GG_{1})_{v_1} $ and $(\GG_{2})_{v_2} $. For labeled polytopes, we take care of the ridge labels in an apparent way to define the \emph{gluing} and \emph{splitting} operations.

\subsection{Property of prismatic circuit}

In this subsection, we use the same notation as in Section \ref{subsec:gluing:definition}. Let $\GG$ be a labeled $d$-polytope and $\delta$ a prismatic circuit of $\GG$. Suppose $\GG$ splits along $\delta$ into $\GG_1^{\dagger v_1}$ and $\GG_2^{\dagger v_2}$. 
A prismatic circuit $\delta$ of $\GG$ is  
\begin{itemize}
\item \emph{useless} if both $\GG_1$ and $\GG_2$ are simplices, and for each $i \in \{1, 2\}$, the facet $s_i$ of $\GG_i$ opposite to $v_i$ is \emph{orthogonal to $\delta$}, \ie the dihedral angle between $s_i$ and $s$ is $\nicefrac{\pi}{2}$ for each facet $s$ of $\GG_i$ that contains $v_i$; 
\item {\em non-essential} if there exists a unique $i \in  \{1, 2 \}$  such that $\GG_{i}$ is a simplex and the facet $s_{i}$ of $\GG_{i}$ opposite to $v_i$ is orthogonal to $\delta$; 
\item {\em essential}, otherwise.
\end{itemize}

\medskip

The following is immediate from the definition:

\begin{lemma}\label{lemma:useless}
Let $\GG$ be a labeled polytope and $\delta$ a prismatic circuit of $\GG$. Suppose that $\GG$ splits along $\delta$ into $\GG_1^{\dagger v_1}$ and $\GG_2^{\dagger v_2}$. Then:
\begin{itemize}
\item if $\delta$ is useless, then $\GG_1^{\dagger v_1} = \GG_2^{\dagger v_2} = \GG$ and $W_\GG = W_{\delta} \times \tilde{A}_1$. In particular, $\GG$ is not irreducible.
\item if $\delta$ is non-essential, then there exist $ i \neq j \in \{ 1, 2 \}$ such that $\GG_i^{\dagger v_i} = \GG$ and  $W_{\GG_j^{\dagger v_j}} = W_{\delta} \times \tilde{A}_1$.
\end{itemize}
\end{lemma}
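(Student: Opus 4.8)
The plan is to isolate the single phenomenon behind both bullet points: \emph{if a piece $\GG_k$ is a simplex whose facet $s_k$ opposite $v_k$ is orthogonal to $\delta$, then $\GG_k^{\dagger v_k}$ is a $d$-prism with Coxeter group $W_\delta \times \tilde{A}_1$, and gluing this prism onto the other piece produces nothing new.} The useless and non-essential cases are then just the two ways of distributing this hypothesis over $\GG_1$ and $\GG_2$, so I would prove these two auxiliary facts first and assemble the statement at the end.

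First I would record the structure of $\GG_k^{\dagger v_k}$. Since $\GG_k$ is a $d$-simplex, $v_k$ is a simple vertex with link $\Delta^{d-1}$, so truncating at the single vertex $v_k$ yields a $d$-prism. Its facets are the $d$ old facets of $\GG_k$ containing $v_k$, which make up the prismatic circuit and hence generate $W_\delta$; the facet $s_k$; and the new facet $\tau_k$. The truncation property forces every ridge on $\tau_k$ to carry the label $\nicefrac{\pi}{2}$, so $\tau_k$ is orthogonal to every circuit facet, and the hypothesis gives the same for $s_k$. As the two bases $s_k$ and $\tau_k$ of a prism share no ridge, they are non-adjacent, whence $M_{s_k \tau_k} = \infty$ and $\{s_k, \tau_k\}$ generates a copy of $\tilde{A}_1$ that commutes with all of $W_\delta$. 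The Coxeter diagram therefore splits as $\mathcal{D}_{W_\delta} \sqcup \tilde{A}_1$, giving $W_{\GG_k^{\dagger v_k}} = W_\delta \times \tilde{A}_1$.

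Next I would show that gluing such a prism is trivial, namely $\GG_1^{\dagger v_1} \sharp_\phi \GG_2^{\dagger v_2} = \GG_j^{\dagger v_j}$ for the index $j \neq k$. Combinatorially the prism $\GG_k^{\dagger v_k} \cong \Delta^{d-1} \times [0,1]$ is attached to $\GG_j^{\dagger v_j}$ along the simplex facet $\tau_k$ identified with the truncation facet $\tau_j$; this is a collar, so it merely replaces $\tau_j$ by the far base $s_k$ while amalgamating the lateral facets into the circuit, leaving the combinatorial type unchanged. The labels also match: the amalgamation preserves labels through $\phi$, and $s_k$ carries exactly the orthogonality to the circuit that $\tau_j$ had by the truncation property. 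Hence $\GG = \GG_j^{\dagger v_j}$ as labeled polytopes.

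Finally I would assemble the two cases. In the non-essential case a unique index, say $k = 1$, satisfies the hypothesis; the prism step gives $W_{\GG_1^{\dagger v_1}} = W_\delta \times \tilde{A}_1$ and the collar step gives $\GG_2^{\dagger v_2} = \GG$, so $(i,j) = (2,1)$ works. In the useless case both indices satisfy the hypothesis, so the collar step with $k = 2$ and with $k = 1$ gives $\GG_1^{\dagger v_1} = \GG = \GG_2^{\dagger v_2}$, and the prism step with $k = 1$ gives $W_\GG = W_{\GG_1^{\dagger v_1}} = W_\delta \times \tilde{A}_1$; the disconnected diagram $\mathcal{D}_{W_\delta} \sqcup \tilde{A}_1$ shows $W_\GG$, hence $\GG$, is not irreducible. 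The only step needing genuine care is the collar step: one must check that attaching the prism neither creates nor destroys ridges and that all ridge labels are preserved, a direct but slightly tedious verification with the face posets and the gluing isomorphism $\phi$.
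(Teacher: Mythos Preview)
Your proposal is correct and is precisely the unpacking of why the lemma holds; the paper itself gives no argument beyond the sentence ``The following is immediate from the definition,'' so your prism/collar analysis is exactly the verification the authors leave to the reader.
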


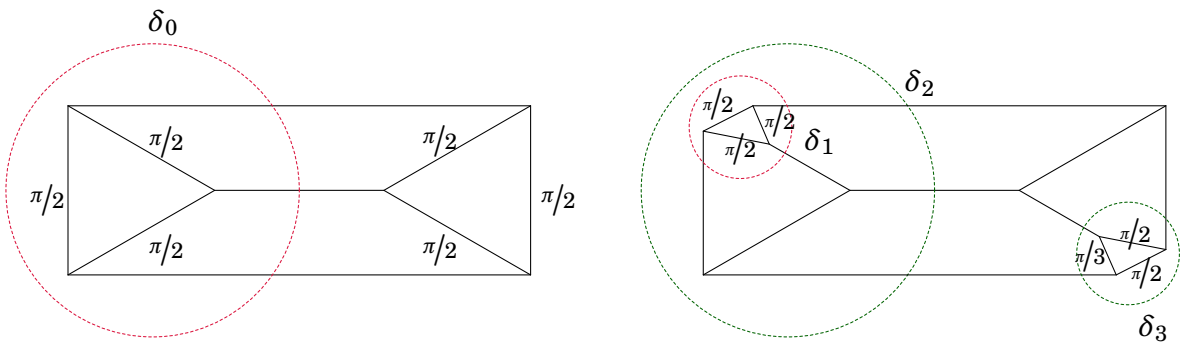
\begin{figure}[ht!]
\begin{tabular}{c c c}
\definecolor{dcrutc}{rgb}{0.86,0.08,0.24}
\begin{tikzpicture}[scale=0.75,line cap=round,line join=round,>=triangle 45,x=1.0cm,y=1.0cm]
\clip(-1.34,-2) rectangle (9.16,5);
\draw (0,3)-- (0,0);
\draw (0,0)-- (8.2,0);
\draw (8.2,0)-- (8.2,3);
\draw (0,3)-- (2.6,1.5);
\draw (2.6,1.5)-- (5.6,1.5);
\draw (5.6,1.5)-- (8.2,3);
\draw (8.2,3)-- (0,3);
\draw (2.6,1.5)-- (0,0);
\draw (5.6,1.5)-- (8.2,0);

\draw (1.25,2.82) node[anchor=north west] {$\nicefrac{\pi}{2}$};
\draw (1.25,0.9) node[anchor=north west] {$\nicefrac{\pi}{2}$};
\draw (-0.86,1.8) node[anchor=north west] {$\nicefrac{\pi}{2}$};

\draw (6.1,2.85) node[anchor=north west] {$\nicefrac{\pi}{2}$};
\draw (8.2,1.8) node[anchor=north west] {$\nicefrac{\pi}{2}$};
\draw (6.1,0.9) node[anchor=north west] {$\nicefrac{\pi}{2}$};
\draw [rotate around={0:(1.5,1.5)},dash pattern=on 1pt off 1pt,color=dcrutc] (1.5,1.5) ellipse (2.6cm and 2.6cm);
\draw (1.24,4.9) node[anchor=north west] {$\delta_0$};
\end{tikzpicture}
&
\definecolor{dcrutc}{rgb}{0.86,0.08,0.24}
\definecolor{qqwuqq}{rgb}{0,0.39,0}
\begin{tikzpicture}[scale=0.75,line cap=round,line join=round,>=triangle 45,x=1.0cm,y=1.0cm]
\clip(-1.34,-2) rectangle (8.66,5);
\draw (2.6,1.5)-- (5.6,1.5);
\draw (5.6,1.5)-- (8.2,3);
\draw (2.6,1.5)-- (0,0);
\draw [rotate around={0:(1.5,1.5)},dash pattern=on 1pt off 1pt,color=qqwuqq] (1.5,1.5) ellipse (2.6cm and 2.6cm);
\draw (1.61,2.79) node[anchor=north west] {$\delta_1$};
\draw (0,2.55)-- (0.88,3);
\draw (1.17,2.32)-- (0,2.55);
\draw (0,2.55)-- (0,0);
\draw (0.88,3)-- (8.2,3);
\draw (1.17,2.32)-- (0.88,3);
\draw (1.17,2.32)-- (2.6,1.5);
\draw (7.02,0.68)-- (7.32,0);
\draw (7.02,0.68)-- (8.2,0.45);
\draw (8.2,0.45)-- (7.32,0);
\draw (0,0)-- (7.32,0);
\draw (7.02,0.68)-- (5.6,1.5);
\draw (8.2,0.45)-- (8.2,3);
\draw (-0.76,-4.54) node[anchor=north west] {Reglage des triangles avec les points K et M};
\draw [rotate around={-30:(0.66,2.62)},dash pattern=on 1pt off 1pt,color=dcrutc] (0.66,2.62) ellipse (0.91cm and 0.91cm);
\draw (-0.63,-4.98) node[anchor=north west] {Reglage ellipse autour des triangles avec U,T,V};
\draw (-0.1,-6.17) node[anchor=north west] {Centre de la figure G};
\draw [rotate around={150:(7.53,0.38)},dash pattern=on 1pt off 1pt,color=qqwuqq] (7.53,0.38) ellipse (0.91cm and 0.91cm);
\draw (3.39,3.82) node[anchor=north west] {$\delta_2$};
\draw (7.53,-0.52) node[anchor=north west] {$\delta_3$};
\draw (0.2,2.7) node[anchor=north west] {$\tiny \nicefrac{\pi}{2}$};
\draw (0.9,3.2) node[anchor=north west] {$\tiny \nicefrac{\pi}{2}$};
\draw (-0.2,3.5) node[anchor=north west] {$\tiny \nicefrac{\pi}{2}$};
\draw (7.2,1.2) node[anchor=north west] {$\tiny \nicefrac{\pi}{2}$};
\draw (7.4,0.5) node[anchor=north west] {$\tiny \nicefrac{\pi}{2}$};
\draw (6.4,0.8) node[anchor=north west] {$\tiny \nicefrac{\pi}{3}$};
\end{tikzpicture}
\end{tabular}
\caption{Examples of useless, non-essential or essential prismatic circuits: 
$\delta_0$ is useless, $\delta_1$ is non-essential, and $\delta_2$ and $\delta_3$ are essential. The ridges with no label may have any label, and it does not affect the properties of the prismatic circuits.}
\label{fig:examples_circuits}
\end{figure}

\begin{lemma}\label{lemma:property:prismatic}
Let $\GG$ be an irreducible, large, $2$-perfect labeled polytope of dimension $d \geqslant 4$ and $\delta$ a prismatic circuit of $\GG$. Assume that $\GG$ splits along $\delta$ into $\GG_1^{\dagger v_1}$ and $\GG_2^{\dagger v_2}$. If $\GG$ is convex-projectivizable, i.e. $\B(\GG) \neq \varnothing$, then the following hold:
\begin{enumerate}
\item the polytopes $\GG_1$ and $\GG_2$ are 2-perfect;
\item the Coxeter group $W_\delta$ is Lann\'{e}r or $\tilde{A}_{d-1}$;
\item if $\delta$ is essential, then $\GG_1$ and $\GG_2$ are also irreducible and large.
\end{enumerate}
\end{lemma}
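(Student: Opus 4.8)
The plan is to isolate a single combinatorial fact, deduce (1) and (3) from it together with the structure theory, and concentrate the real work in (2).

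First I would record the driving fact: \emph{$W_{\delta\smallsetminus\{s\}}$ is spherical for every $s\in\delta$}. Since $\psi(\hat\delta)$ is the face poset of the boundary of a $(d-1)$-simplex, for each $s$ the $d-1$ dual vertices of $\delta\smallsetminus\{s\}$ span a facet of $\Delta_\delta$, a proper face, hence a face of $\GG^*$ and dually a face of $\GG$; a dimension count identifies it with an edge $e_s$ of $\GG$ with $\delta\smallsetminus\{s\}\subseteq S_{e_s}$. As $\GG$ is $2$-perfect the edge-link group $W_{S_{e_s}}$ is spherical, so its standard subgroup $W_{\delta\smallsetminus\{s\}}$ is spherical. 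For (1) I would use that $2$-perfectness means precisely that every edge-link group is spherical. The splitting alters $\GG$ only near the cut, replacing that region by the single simple vertex $v_i$: every edge of $\GG_i$ avoiding $v_i$ is an edge of $\GG$ with the same facet set, hence a spherical link; and every edge at the simple vertex $v_i$ has facet set $\delta_i\smallsetminus\{s\}$, isomorphic to $\delta\smallsetminus\{s\}$, spherical by the driving fact. Hence $\GG_1$ and $\GG_2$ are $2$-perfect.

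For (2) the driving fact already makes $W_\delta$ a rank-$d$ Coxeter group all of whose maximal standard subgroups are spherical, and $W_\delta$ is not spherical because $\delta$ is not a face of $\GG$ (the relative interior of $\Delta_\delta$ lies in the interior of $\GG^*$); thus $W_\delta$ is Lann\'er or irreducible affine, and it remains to exclude the non-$\tilde{A}$ affine types. Here convex-projectivizability enters: choose $[P]\in\B(\GG)$ and let $A_\delta$ be the principal submatrix of $A_P$ indexed by $\delta$, the Cartan matrix of the Coxeter $(d-1)$-simplex $\Delta_{A_\delta}$, which is perfect precisely because each $W_{\delta\smallsetminus\{s\}}$ is spherical. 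I claim $A_\delta$ is of negative type. It is not positive, for that would realize $\delta$ as an elliptic vertex in $\Omega_P$, contradicting prismaticity; and it is not zero, for a parabolic $A_\delta$ would force the $d$ supporting hyperplanes of $\delta$ to meet at a single cusp of $\partial\Omega_P$, which by Vinberg's face correspondence (Theorem \ref{theo_vinberg}) is an ideal vertex of $P$ with facet set exactly $\delta$, again contradicting prismaticity. Therefore $A_\delta$ is negative type, $\Delta_{A_\delta}$ is a loxodromic perfect Coxeter $(d-1)$-simplex, and Remark \ref{remark:loxo_imply} gives that $W_\delta$ is Lann\'er or $\tilde{A}_{d-1}$.

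For (3) I would argue by contraposition from the fact that $v_i$ carries the infinite, irreducible link $W_\delta$. The group $W_{\GG_i}$ cannot be spherical, since it contains the infinite subgroup $W_\delta$; nor can it be irreducible affine, since then $\GG_i$ would be a $d$-simplex and the link of $v_i$—obtained by deleting the facet $s_i$ opposite $v_i$—would be spherical, whereas it is the infinite group $W_\delta$. Hence, if $\GG_i$ failed to be irreducible and large, $W_{\GG_i}$ would be reducible, that is $\GG_i$ a join; but in a join only an apex can carry an irreducible vertex link, so $v_i$ would be the apex of a pyramid $\GG_i=\Ss_{W_\delta}\otimes\,\cdot$ over the $(d-1)$-simplex $\Ss_{W_\delta}$, making $\GG_i$ a $d$-simplex whose facet $s_i$ opposite $v_i$ is orthogonal to $\delta$. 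By Lemma \ref{lemma:useless} this forces $\delta$ to be useless or non-essential. Consequently, when $\delta$ is essential both $\GG_1$ and $\GG_2$ are irreducible and large.

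The main obstacle is the exclusion of the parabolic case in (2): it is the only step that genuinely uses $\B(\GG)\neq\varnothing$, and it rests on promoting a zero-type Cartan submatrix to an honest ideal vertex of the realization $P$ and then contradicting prismaticity through Vinberg's correspondence between parabolic facet-subsets and cusps.
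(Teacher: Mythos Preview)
Your overall strategy is sound and genuinely different from the paper's. The paper works almost entirely through the realization: it invokes Marquis's Lemma~8.19 to show the slice $P_\delta$ is a perfect, properly convex (hence loxodromic) Coxeter $(d-1)$-simplex, and then Theorem~\ref{tri} and Remark~\ref{remark:loxo_imply} give (2) and (1) simultaneously. Your approach front-loads the combinatorics via the ``driving fact'' that every $W_{\delta\smallsetminus\{s\}}$ is spherical; this yields (1) with no analysis, and reduces (2) to excluding the zero-type case for the submatrix $A_\delta$. That exclusion argument is essentially right---if $A_\delta$ has zero type with Perron eigenvector $\lambda>0$, then $v=\sum_{t\in\delta}\lambda_t b_t$ satisfies $\alpha_s(v)=\sum_t\lambda_t A_{st}\leqslant 0$ for every $s\in S$, so $[v]\in P\cap\bigcap_{s\in\delta}s$, contradicting prismaticity---but you should say why $v\neq0$: if $v=0$ then every $s\notin\delta$ has $\sum_t\lambda_tA_{st}=0$, forcing $A_{st}=0$ for all $t\in\delta$ and making $W_{\GG}$ reducible. (Also, to get $W_\delta$ infinite you need Vinberg's Theorem~4 applied to $P$, not just the combinatorial fact that $\delta$ is not a face.)

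There is, however, a real gap in your argument for (3). You write ``$W_{\GG_i}$ would be reducible, that is $\GG_i$ a join,'' but a labeled polytope with reducible Coxeter group need not be a join: a prism with carefully chosen right-angle labels gives an easy counterexample. What rescues the argument is that $\GG_i$ is \emph{convex-projectivizable}---the polytope $P_i=\bigcap_{s\in S_i}\{\alpha_s\leqslant 0\}$ obtained by splitting $P$ realizes $\GG_i$---so Theorem~\ref{tri}(5) applies: if $W_{\GG_i}$ is large and reducible, any realization is of the form $Q\otimes\cdot$, forcing $\GG_i$ to be a pyramid. Your apex analysis then correctly shows $v_i$ must be the apex (a base vertex would put the orthogonal base facet inside $\delta$, contradicting irreducibility of $W_\delta$), whence $\GG_i$ is a simplex with $s_i$ orthogonal to $\delta$, contradicting essentiality. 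You should make the appeal to Theorem~\ref{tri} explicit, and note that the splitting construction (which ultimately rests on the same Marquis lemma the paper uses) is what supplies the needed realization of $\GG_i$.
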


\begin{proof}
Suppose $(P,\, (\sigma_s = \mathrm{Id} - \alpha_s \otimes b_s)_{s \in S} )$ is a Coxeter $d$-polytope realizing $\GG$. Since the intersection of the facets of $P$ in $\delta$ is empty, the group $W_\delta$ is infinite by Vinberg \cite[Th.\,4]{MR0302779}. We denote by $\Pi_\delta$ the subspace spanned by $(b_s)_{s \in \delta}$, by $\sigma_s^\delta$ the induced reflection of $\sigma_s$ on $\Pi_{\delta}$, and by $\Gamma_\delta$ the subgroup of $\mathrm{SL}^{\pm}(\Pi_{\delta})$ generated by $(\sigma_s^\delta)_{s \in \delta}$. Then by \cite[Lem.\,8.19]{Marquis:2014aa}, 
 $$ P_{\delta} := \bigcap_{s \in \delta} \S( \{ x \in \Pi_\delta \mid \alpha_s (x) \leqslant 0 \})$$
is a perfect Coxeter $(d-1)$-simplex such that the $\Gamma_\delta$-orbit of $P_{\delta}$ is a properly convex domain $\Omega_P \cap \mathbb{S}(\Pi_\delta)$ of $\mathbb{S}(\Pi_\delta)$. So, the vertex link of $\GG_i$ at $v_i$ is perfect, and by Theorem \ref{tri} and Proposition \ref{prop:label_simplex}, $W_\delta$ is Lann\'{e}r or $\tilde{A}_{d-1}$.

\medskip

Now, assume that $\delta$ is essential. There is no facet of $\GG_i$ orthogonal to $\delta$ and $W_\delta$ is irreducible. Thus $W_{\GG_i}$ is also irreducible. Finally, since $W_\delta$ is Lann\'{e}r or $\tilde{A}_{d-1}$, and since $W_\delta$ is a proper standard subgroup of $W_{\GG_i}$, the group $W_{\GG_i}$ must be large.
\end{proof}

\subsection{The splitting of Coxeter polytope}

Let $\GG$ be an irreducible, large, 2-perfect labeled polytope of dimension $d \geqslant 4$, $S$ the set of facets of $\GG$, and $\delta$ an essential prismatic circuit of $\GG$. Assume that $\GG$ splits along $\delta$ into $\GG_1^{\dagger v_1}$ and $\GG_2^{\dagger v_2}$. Then $\GG = \GG _1^{\dagger v_1} \sharp_{\phi} \GG_2^{\dagger v_2}$ for the induced isomorphism $\phi$ between the links $(\GG_1)_{v_1}$ and $(\GG_2)_{v_2}$. For each $i = 1, 2$, we denote by $S_i$ the subset of $S$ consisting of the facets of $\GG$ that correspond to the facets of $\GG_i$. Then $S=S_1 \cup S_2$ and $\delta=S_1 \cap S_2$.

\medskip

Now, we define a {\em splitting} map 
$$\textrm{Cut}_{\delta} : \B(\GG) \to \B(\GG_1^{\dagger v_1}) \times \B(\GG_2^{\dagger v_2})$$ 
by sending $[P] \in \B(\GG)$ with $P= \bigcap_{s \in S} \S( \{ x \in V \mid \alpha_s (x) \leqslant 0 \})$ to $([P_1^{\dagger v_1}],[P_2^{\dagger v_2}]) \in \B(\GG_1^{\dagger v_1}) \times \B(\GG_2^{\dagger v_2})$, where 
$ P_i = \bigcap_{s \in S_{i}} \S( \{ x \in V \mid \alpha_s (x) \leqslant 0 \}) $  for each $i \in 1, 2$. By definition, $P = P_1 \cap P_2$. The splitting map is well-defined again by \cite[Lem.\,8.19]{Marquis:2014aa}: for each $[P] \in \B(\GG)$, the subspace $\Pi_{\delta}$ of $V$ spanned by $(b_s)_{s \in \delta}$ is a hyperplane, and the intersection of $\S(\Pi_{\delta})$ and the relative interior of $e$ is a singleton, for each edge $e$ of $P$ in the prismatic poset associated to $\delta$. Hence the vertex $v_i$ of $P_i$ is truncatable and $P_i^{\dagger v_i}$ lies in $\B(\GG_i^{\dagger v_i})$. Moreover, the links $(P_{1})_{v_1}$ and $(P_{2})_{v_2}$ are isomorphic.

\medskip

Let $\delta_i$ be the set of facets of $\GG_i$ that contain $v_i$. Since $W_\delta$ is Lann\'{e}r or $\tilde{A}_{d-1}$ by Lemma \ref{lemma:property:prismatic}, $W_\delta = W_{\delta_i}$ is of cycle type or of tree type. If $W_\delta$ is of cycle type, it has only two relevant circuits $\C_\delta$ and $\overline{\C}_\delta$. Otherwise, it has no relevant circuit. We denote by $R_{\delta_i}$ the normalized cyclic product of $\C_{\delta_i}$ if $W_{\delta}$ is of cycle type, and $0$ otherwise. Then we may choose an orientation of $\C_{\delta_i}$ so that $R_{\delta_1}([P_1^{\dagger v_1}]) = R_{\delta_2}([P_2^{\dagger v_2}])$. So, we introduce the following subspace of $\B(\GG_1^{\dagger v_1}) \times \B(\GG_2^{\dagger v_2})$:
$$
\B(\GG_1^{\dagger v_1}) \boxtimes_{\phi} \B(\GG_2^{\dagger v_2}) := \{([P_1^{\dagger v_1}],[P_2^{\dagger v_2}]) \in \B(\GG_1^{\dagger v_1}) \times \B(\GG_2^{\dagger v_2}) \mid  R_{\delta_1}([P_1^{\dagger v_1}]) =  R_{\delta_2}([P_2^{\dagger v_2}])\}
$$
Since the image of the map $\textrm{Cut}_{\delta}$ lies in $\B(\GG_1^{\dagger v_1}) \boxtimes_{\phi} \B(\GG_2^{\dagger v_2})$, we shall restrict the range of $\textrm{Cut}_{\delta}$ accordingly. 

\begin{lemma}\label{lem:gluing}
Let $\GG$ be an irreducible, large, 2-perfect labeled polytope of dimension $d \geqslant 4$, and let $\delta$ be an essential prismatic circuit of $\GG$. Suppose that $\GG$ splits along $\delta$ into $\GG_1^{\dagger v_1}$ and $\GG_2^{\dagger v_2}$. Then there exists an $\mathbb{R}$-action $\Psi$ on $\B(\GG)$ such that $\textrm{Cut}_{\delta}$ is a $\Psi$-invariant fibration onto $\B(\GG_1^{\dagger v_1}) \boxtimes_{\phi} \B(\GG_2^{\dagger v_2})$ and $\Psi$ is simply transitive on each fiber of $\textrm{Cut}_{\delta}$. 
\end{lemma}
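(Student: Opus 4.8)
The plan is to realize $\Psi$ as the \emph{bending} deformation along the truncation hyperplane $\Pi_\delta$, and to read off its properties from the cyclic--product dictionary of Theorem \ref{inj}. Fix $[P]\in\B(\GG)$ with $P=P_1\cap P_2$ and reflections $(\sigma_s=\mathrm{Id}-\alpha_s\otimes b_s)_{s\in S}$. Let $\Pi_\delta$ be the hyperplane spanned by $(b_s)_{s\in\delta}$ and let $L:=\bigcap_{s\in\delta}\ker\alpha_s$ be the transverse line; since each $\sigma_s$ with $s\in\delta$ has pole in $\Pi_\delta$ and support containing $L$, the decomposition $V=\Pi_\delta\oplus L$ is $\Gamma_\delta$--invariant, with $\Gamma_\delta$ acting trivially on $L$ and, because $W_\delta$ is irreducible (Lannér or $\tilde{A}_{d-1}$ by Lemma \ref{lemma:property:prismatic}) with non-degenerate Cartan matrix of the perfect simplex $P_\delta$, irreducibly on $\Pi_\delta$. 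A Schur argument then identifies the centralizer $Z:=Z_{\mathrm{SL}^\pm(V)}(\Gamma_\delta)$ with the maps acting by a scalar on $\Pi_\delta$ and by a scalar on $L$; modulo $\{\pm\mathrm{Id}\}$ its identity component is a one-parameter group $\{g_t\}_{t\in\R}\cong\R$. I define $\Psi_t[P]$ to be the class of the polytope obtained by keeping $(\sigma_s)_{s\in S_1}$ and replacing $(\sigma_s)_{s\in S_2}$ by $(g_t\sigma_s g_t^{-1})_{s\in S_2}$. As $g_t$ centralizes $\Gamma_\delta$ one has $g_t\sigma_s g_t^{-1}=\sigma_s$ for $s\in\delta$, so the two sides agree on $\delta$ and the gluing is consistent; since $\{g_t\}$ is a homomorphism, $\Psi$ is an $\R$--action. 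Theorem \ref{surj} shows each $\Psi_t[P]$ realizes $\GG$: its Cartan matrix remains irreducible, of negative type and of rank $d+1$, hence determines a unique loxodromic Coxeter polytope, with the labels on ridges unchanged.

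Next I check $\Psi$--invariance. Since $\Psi_t$ leaves $P_1$ untouched and replaces $P_2$ by its projective conjugate $g_tP_2$, the $S_1$--block of $\Psi_t[P]$ equals that of $[P]$, while the $S_2$--block is conjugate to that of $[P]$, whence $[(g_tP_2)^{\dagger v_2}]=[P_2^{\dagger v_2}]$. Therefore $\textrm{Cut}_\delta\circ\Psi_t=\textrm{Cut}_\delta$.

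For surjectivity and the description of fibers I use Theorem \ref{inj}: $[P]$ is determined by the cyclic products of $A_P$, which split into those supported in $S_1$, those supported in $S_2$, and crossing ones meeting both $S_1\smallsetminus\delta$ and $S_2\smallsetminus\delta$. The first two classes are exactly the data recorded by $\textrm{Cut}_\delta[P]=([P_1^{\dagger v_1}],[P_2^{\dagger v_2}])$. A direct computation with $g_t\sigma_s g_t^{-1}=\sigma_s$ ($s\in\delta$) shows that bending fixes every crossing cyclic product routed from one side to the other \emph{through} $\delta$ (the scalars of $g_t$ contribute cancelling factors), but moves, injectively in $t$, the cyclic products using a \emph{direct} coupling $A_{st}$ with $s\in S_1\smallsetminus\delta$ and $t\in S_2\smallsetminus\delta$: there the term $\alpha_s(g_tb_t)$ varies because the facet $s$ does not contain $v_1$, so $\alpha_s$ is nonzero on $L$. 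This is precisely the information forgotten by $\textrm{Cut}_\delta$. For surjectivity onto $\B(\GG_1^{\dagger v_1})\boxtimes_\phi\B(\GG_2^{\dagger v_2})$, the condition $R_{\delta_1}=R_{\delta_2}$ guarantees, again by Theorem \ref{inj}, that the link simplices of $P_1$ at $v_1$ and of $P_2$ at $v_2$ are isomorphic Coxeter simplices; placing $P_2$ so that its $\delta$--reflections coincide with those of $P_1$ and gluing yields a preimage realizing $\GG$ via Theorem \ref{surj}.

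Finally, suppose $\textrm{Cut}_\delta[P]=\textrm{Cut}_\delta[P']$. Then $P$ and $P'$ share all $S_1$-- and $S_2$--cyclic products and, by the $\boxtimes_\phi$ condition, matching $\delta$--blocks, so after conjugation we may assume they share $P_1$ and the $\delta$--reflections; thus $P'$ arises from a placement of $P_2$ compatible with these fixed $\delta$--reflections, and the set of such placements is a torsor under $Z$. Modulo scalars this torsor is the orbit $\{\Psi_t[P]\}$, which gives both halves of simple transitivity. I expect the two endpoints of this identification to be the main obstacle: \emph{transitivity}, that $Z$ modulo scalars is exhausted by $\{g_t\}$ (hence every admissible placement is a single bend of $P$), and \emph{freeness}, that distinct $t$ give non-isomorphic polytopes — which reduces, via Theorem \ref{inj}, to the nonvanishing computation above showing that a direct--coupling cyclic product strictly varies with $t$, together with the fact that the centralizer of the irreducible group $\Gamma_{S_1}$ is scalar. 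The remaining delicate point, also handled through the negative-type, rank $d+1$ structure of $A_P$ and Theorem \ref{surj}, is to verify that the deformed configuration stays combinatorially equal to $\GG$ for \emph{all} $t\in\R$, i.e. that the direct couplings never leave case (\texttt{N}) or (\texttt{Z}) to become case (\texttt{P}); this ensures the whole orbit lies in $\B(\GG)$ and that $\textrm{Cut}_\delta$ is a genuine fibration.
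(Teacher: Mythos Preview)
Your proposal is correct and follows essentially the same approach as the paper: both define $\Psi$ as the bending deformation along $\Pi_\delta$ (you via Schur's lemma identifying the centralizer $Z$ of $\Gamma_\delta$, the paper via the explicit diagonal one-parameter group $g_u=\mathrm{diag}(e^u,\dots,e^u,e^{-du})$), and both establish simple transitivity by showing that any two polytopes in a fiber differ by an element commuting with the $\delta$-reflections, hence lying in $Z$. The paper's freeness argument singles out one explicit $3$-circuit $(s,s',s'')$ with $s\in\delta$, $s'\in S_1\smallsetminus\delta$, $s''\in S_2\smallsetminus\delta$ and checks its normalized cyclic product is a homeomorphism in $u$, which is a concrete instance of your ``direct-coupling cyclic product strictly varies'' observation.
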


The above lemma was proved by the third author \cite[Lem.\,4.36]{MR2660566} in dimension $d = 3$. One can extend the proof to any dimension $d \geqslant 4$ without difficulty, but we give an outline of a proof for the reader’s convenience.

\begin{proof}
We first define the $\mathbb{R}$-action $\Psi$ on $\B(\GG)$.  Given any $[P] \in \B(\GG)$, we have Coxeter polytopes $P_1^{\dagger v_1}$ and $P_2^{\dagger v_2}$ such that 
$P = P_1 \cap P_2$ and $\textrm{Cut}_{\delta}([P]) = ([P_1^{\dagger v_1}],[P_2^{\dagger v_2}])$. Let $(e_i)_{i=1}^{d+1}$ be the canonical basis of $\R^{d+1}$ and $(e^\ast_i)_{i=1}^{d+1}$ its dual basis. We may assume that the supporting hyperplanes of the facets in $\delta$ are $ \{ \S(\ker(e_{i}^*)) \}_{i=1}^{d}$ and that the subspace $\Pi_{\delta}^P$ spanned by $(b_s)_{s \in \delta}$ equals $\ker(e_{d+1}^*)$, where $(\sigma_s = \mathrm{Id} - \alpha_s \otimes b_s)_{s \in S}$ is the set of reflections of $P$. Now, if $g_u \in \mathrm{SL}^{\pm}_{d+1}(\mathbb{R})$ is the diagonal matrix with entries $e^u, \dotsc, e^u, e^{-du}$, then 
$$\Psi_u([P]) := [P_1 \cap g_u (P_2)] \in \B(\GG)$$ 
lies in the same fiber of $\textrm{Cut}_{\delta}$ as $[P] = \Psi_0([P])$. It gives us the required $\mathbb{R}$-action $\Psi$ that preserves each fiber of $\mathrm{Cut}_{\delta}$.

\medskip

To show that $\Psi$ is free on each fiber of $\mathrm{Cut}_{\delta}$, we may choose a facet $s$ in $\delta$, $s'$ of $\GG_1$ not in $\delta$, and $s''$ of $\GG_2$ not in $\delta$ such that the dihedral angles between $s$ and $s'$ and between $s$ and $s''$ are different from $\nicefrac{\pi}{2}$. If we denote by $A^u $ the Cartan matrix of $\Psi_u([P])$, then the map $R :  \mathbb{R} \rightarrow \mathbb{R}$ given by
 $$ R(u) = \log\left( \frac{A^u_{s s'}A^u_{s' s''}A^u_{ s'' s }}{A^u_{ s s''}A^u_{ s'' s' }A^u_{ s' s }} \right)$$
 is a homeomorphism. So, the action on each fiber is free.

\medskip

Finally we show that $\Psi$ is transitive on each fiber. Let $[P], [Q] \in \B(\GG)$ be on the same fiber, \ie $P = P_1 \cap P_2$, $Q = Q_1 \cap Q_2$ and $([P_1^{\dagger v_1}],[P_2^{\dagger v_2}]) = ([Q_1^{\dagger v_1}],[Q_2^{\dagger v_1}])$. Then we may assume that $Q_1 = P_1$ and $Q_2 = g(P_2)$ for some $g \in \mathrm{SL}^{\pm}_{d+1}(\mathbb{R})$, that the supporting hyperplanes of the facets of $P$ (hence also of $Q$) in $\delta$ are $ \{ \S(\ker(e_{i}^*)) \}_{i=1}^{d}$, and that the subspace $\Pi_\delta^P$ (hence also $\Pi_\delta^Q$)  equals $\ker(e_{d+1}^*)$. The restriction of $g$ on $\S(\ker(e_{d+1}^*))$ is the identity and $g([e_{d+1}]) = [e_{d+1}]$ because $[e_{d+1}] = \cap_{i=1}^{d} \S(\ker (e_{i}^*))$. In other words, $g = g_u$ for some $u$, hence $\Phi_u([P])=[Q]$.
\end{proof}

\begin{rem}
A similar construction as in this section may be found in one of Fenchel-Nielsen coordinates that parametrize hyperbolic structures on surface. An essential simple closed curve on the surface (resp. the length of the unique geodesic isotopic to that curve) plays a role of the essential prismatic circuit of polytope (resp. the cyclic product). The cutting and gluing operations along the geodesic are analogous to cutting and gluing along the essential prismatic circuit. And, there is a gluing parameter called the Dehn twist parameter. Instead, in the case of hyperbolic polygon, a pair of nonadjacent edges (resp. the distance between those edges) plays a role of the essential prismatic circuit (resp. the cyclic product), but there is no gluing parameter. In our case, there is a gluing parameter, called \emph{bending} (or \emph{bulging}) which comes from projective geometry. One can find a description of bending deformation for convex projective manifold in \cite{johnson_millson} or \cite{bulging}, and a lemma \cite[Lem. 5.3]{surf_gold} for convex projective surface, analogous to Lemma \ref{lem:gluing}. 
\end{rem}

Let $\GG$ be an irreducible, large, $2$-perfect labeled polytope of dimension $d \geqslant 4$ and let $\delta$ be a prismatic circuit of $\GG$. As in the proof of Lemma \ref{lemma:property:prismatic}, one can show that if $\GG$ is hyperbolizable, then $W_{\delta}$ is Lann\'{e}r. Assume that $\delta$ is essential and that $\GG$ splits along $\delta$ into $\GG_1^{\dagger v_1}$ and $\GG_2^{\dagger v_2}$. One can also define the splitting map
$$
\mathrm{Cut}_{\delta}^{\hyp} \,:\, \mathrm{Hyp}(\GG) \to \mathrm{Hyp}(\GG_1^{\dagger v_1}) \boxtimes_{\phi} \mathrm{Hyp}(\GG_2^{\dagger v_2})
$$
similar to the splitting map $\mathrm{Cut}_{\delta}$ of $\B(\GG)$. Clearly, $\mathrm{Cut}_{\delta}^{\hyp}$ is bijective. 

\begin{proof}[Proof of Theorem \ref{thm:geometrization}]

Let $\GG$ be an irreducible, large, 2-perfect, labeled truncation polytope of dimension $d \geqslant 4$, $\mathcal{P}$ the set of prismatic circuits of $\GG$, and $\mathcal{P}_e$ the set of essential prismatic circuits of $\GG$.

\medskip

By Lemma \ref{lemma:property:prismatic}, if $\GG$ is convex-projectivizable, then $W_\delta$ is Lannér or $\tilde{A}_{d-1}$ for each $\delta \in \mathcal{P}$. Conversely, suppose that $W_\delta$ is Lannér or $\tilde{A}_{d-1}$ for each $\delta \in \mathcal{P}$. The polytope $\GG$ splits along $\mathcal{P}_e$ into once-truncated $d$-simplices $\{ \Ss_i^{\dagger \mathcal{V}_i} \}_{i=1}^{k_e + 1}$, where each $\Ss_i$ is an irreducible, large, 2-perfect labeled simplex, $\mathcal{V}_i$ is a set of vertices in $\Ss_i$ that correspond to $\mathcal{P}$, and $k_e = \# \mathcal{P}_e$. By Proposition \ref{prop:deformation_block}, each once-truncated simplex $\Ss_i^{\dagger \mathcal{V}_i}$ is convex-projectivizable and by Lemma \ref{lem:gluing}, $\GG$ is also convex-projectivizable.

\medskip

In the similar fashion, one can show that $\GG$ is hyperbolizable if and only if $W_\delta$ is Lannér for each $\delta \in \mathcal{P}$. 

\medskip

We now assume that $\GG$ is perfect. It is well-known that if $\GG$ is hyperbolizable, then $W_\GG$ is word-hyperbolic. Conversely, suppose that $\GG$ is convex-projectivizable and $W_\GG$ is word-hyperbolic. The previous statements show that $W_\delta$ is Lannér or $\tilde{A}_{d-1}$ for each $\delta \in \mathcal{P}$. But $W_\delta$ cannot be $\tilde{A}_{d-1}$. Otherwise, the word-hyperbolic group $W_\GG$ would contain a virtually free abelian group $\tilde{A}_{d-1}$ of rank $d-1 \geqslant 2$. Thus $\GG$ is hyperbolizable, again by the previous statements.
\end{proof}

\begin{de}
The set of once-truncated labeled simplices $\{ \Ss_i^{\dagger \mathcal{V}_i} \}_{i=1}^{k_e + 1}$ in the proof of Theorem \ref{thm:geometrization} is called the \emph{decomposition of $\GG$ along the essential prismatic circuits}, and each $\Ss_i$ is a \emph{block of $\GG$}. Note that $W_{\Ss_i}$ is a 2-Lann\'{e}r Coxeter group.
\end{de}

\begin{rem}\label{rem:geometrization}
The proof of Theorem \ref{thm:geometrization} explains how to obtain the complete list of convex-projectivizable or hyperbolizable, irreducible, large, 2-perfect labeled truncation polytopes of dimension $d \geqslant 4$ from the list of 2-Lann\'{e}r Coxeter groups of rank $\geqslant 5$ in Appendix \ref{classi_lorent}. 
\end{rem}

\subsection{Evaluation map}\label{subsec:evaluation}

Let $\GG$ be a convex-projectivizable, irreducible, large, 2-perfect labeled truncation polytope of dimension $\geqslant 4$. We denote by $ \{ \Ss_i^{\dagger \mathcal{V}_i} \}_{i=1}^{k_e + 1} $ the decomposition of $\GG$ along the essential prismatic circuits. A prismatic circuit $\delta$ (resp. a vertex $v$, resp. a block $\Ss_i$) is “\emph{something}” if the Coxeter group $W_\delta$ (resp, $W_{S_v}$, resp, $W_{\Ss_i}$) is “something”. For example, the block $\Ss_i$ can have four different types: tree, cycle, pan, or $K_{2,3}$. We now introduce the following notation:

\begin{itemize}
\item  $\mathcal{P}_{fL}$ is the set of flexible Lann\'{e}r prismatic circuits of $\GG$, and $k_L(\GG) = \# \mathcal{P}_{fL}$;

\item $\mathcal{P}_{A}$ is the set of $\tilde{A}_{d-1}$ prismatic circuits of $\GG$, and $k_A(\GG) = \# \mathcal{P}_{A}$; 

\item $\mathcal{V}_{f}$ is the set of flexible vertices of $\GG$, and $k_{v}(\GG) = \# \mathcal{V}_{f}$;

\item $\mathcal{B}_{c}$ is the set of blocks $\Ss_i$ of cycle type, and $k_c(\GG) = \# \mathcal{B}_{c}$;

\item $\mathcal{B}_{K}$ is the set of blocks $\Ss_i$ of $K_{2,3}$ type, and $k_K(\GG) = \# \mathcal{B}_{K}$.
\end{itemize}

A map 
$$ \Theta : \B(\GG) \rightarrow Y(\GG) :=  \R^{ k_L(\GG) } \times (\R^*)^{ k_A(\GG) } \times \R^{k_v(\GG)} \times \R^{ k_c(\GG) } $$
is given by the evaluation of $[P] \in \B(\GG)$ on the circuits that correspond to the elements in $ \mathcal{P}_{fL}$,  $\mathcal{P}_{A}$, $\mathcal{V}_{f}$ and $\mathcal{B}_{c} $. More precisely, if $\delta \in \mathcal{P}_{fL} \cup \mathcal{P}_{A}$ (resp. $v \in \mathcal{V}_f$, resp. $\Ss_i \in \mathcal{B}_c$), then the Coxeter group $W_\delta$ (resp. $W_{S_v}$, resp. $W_{\Ss_i}$) is of cycle type. So, it has a unique relevant circuit up to orientation, denoted by $\mathcal{C}_{\delta}$ (resp. $\mathcal{C}_{v}$, resp. $\mathcal{C}_{i}$). Then 
$$\Theta([P]) := \left( (R_{{\mathcal{C}}_{\delta}}(A_{P}))_{\delta \in \mathcal{P}_{fL} }, (R_{{\mathcal{C}}_{\delta}}(A_{P}))_{\delta \in \mathcal{P}_{A} }, (R_{{\mathcal{C}}_{v}}(A_{P}))_{v \in \mathcal{V}_{f} }, (R_{{\mathcal{C}}_{i}}(A_{P}))_{\Ss_i \in \mathcal{B}_{c}} \right),$$
where $R_{\mathcal{C}}$ denotes the normalized cyclic product of $\mathcal{C}$.

\medskip

Each $\Ss_j \in \mathcal{B}_K$ has three relevant circuits 
$\{ \mathcal{C}_{j}^\ell \}_{\ell=1,2,3}$ up to orientation. And, $\mathcal{C}_{j}^{\ell}$ is either $\mathcal{C}_\delta$ $(\delta \in \mathcal{P}_{fL} \cup \mathcal{P}_A) $, or $\mathcal{C}_v$ $(v \in \mathcal{V}_f)$ again up to orientation. We may choose the orientations of $\mathcal{C}_\delta$ $(\delta \in \mathcal{P}_{fL} \cup \mathcal{P}_{A})$ and $\mathcal{C}_v$ $(v \in \mathcal{V}_f)$ coherently so that each $\mathcal{C}_{j}^{\ell}$ equals $\mathcal{C}_\delta$ or $\mathcal{C}_v$ \emph{with orientation} and that $ \sum_{\ell=1}^3 R_{\mathcal{C}_{j}^{\ell}}(A_P) = 0$. Then we consider the following subspace of $Y(\GG)$:

$$
X(\GG) = \{\, y \in Y(\GG) \,\mid\,  \sum_{\ell=1}^3 y_{j}^\ell = 0 \,\, \textrm{ for every block } \Ss_j \in \mathcal{B}_K \,\},
$$
where $y_{j}^\ell$ is the $\delta$-coordinate of $y$ if $ \mathcal{C}_{j}^\ell = \mathcal{C}_\delta$, or the $v$-coordinate of $y$ if $ \mathcal{C}_{j}^\ell = \mathcal{C}_v$. Since the image of the map $\Theta$ lies in $X(\GG)$, we shall restrict the range of $\Theta$ accordingly. Now, (the proof of) Proposition \ref{prop:deformation_block} and Lemma \ref{lem:gluing} show that:

\begin{propo}\label{prop:evaluation}
Let $\GG$ be a convex-projectivizable, irreducible, large, 2-perfect labeled truncation polytope of dimension $d \geqslant 4$. Then there exists an $\mathbb{R}^{k_e}$-action $\Psi$ on $\B(\GG)$ such that $\Theta$ is a $\Psi$-invariant fibration onto $X(\GG)$ and $\Psi$ is simply transitive on each fiber of $\Theta$.
\end{propo}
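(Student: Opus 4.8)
The plan is to prove Proposition~\ref{prop:evaluation} by induction on the number $k_e = \#\mathcal{P}_e$ of essential prismatic circuits, using the single-circuit statement of Lemma~\ref{lem:gluing} for the inductive step and identifying the base of the resulting fibration with $X(\GG)$ by means of Proposition~\ref{prop:deformation_block}. The point is that $\Theta$ records only the cyclic products that are \emph{internal} to the blocks $\Ss_i$, while the $\mathbb{R}^{k_e}$ of fiber directions consists precisely of the bending (gluing) parameters, one per essential circuit.

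For the base case $k_e = 0$ the polytope $\GG$ is a single once-truncated simplex $\Ss_1^{\dagger\mathcal{V}_1}$; here the claimed $\mathbb{R}^0$-action is trivial, and Proposition~\ref{prop:deformation_block} together with the explicit parametrization in the proof of Theorem~\ref{t:moduli_simplex} shows that $\Theta$ is a homeomorphism of $\B(\GG)$ onto $X(\GG)$, a cycle- or pan-type block contributing one coordinate, a $K_{2,3}$-type block two coordinates subject to the coherent-orientation relation $\sum_\ell y^\ell = 0$, and a tree-type block none. For the inductive step I would fix an essential circuit $\delta \in \mathcal{P}_e$ and split $\GG$ along $\delta$ into $\GG_1^{\dagger v_1}$ and $\GG_2^{\dagger v_2}$. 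By Lemma~\ref{lemma:property:prismatic} both $\GG_1$ and $\GG_2$ are again irreducible, large and $2$-perfect, each carrying strictly fewer essential prismatic circuits, so the inductive hypothesis furnishes evaluation maps $\Theta_1,\Theta_2$ that are invariant fibrations onto $X(\GG_1^{\dagger v_1})$ and $X(\GG_2^{\dagger v_2})$ under $\mathbb{R}^{k_{e,1}}$- and $\mathbb{R}^{k_{e,2}}$-actions simply transitive on fibers, with $k_{e,1}+k_{e,2}=k_e-1$.

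Next I would glue these data together. Lemma~\ref{lem:gluing} provides the splitting map $\mathrm{Cut}_\delta$ onto $\B(\GG_1^{\dagger v_1}) \boxtimes_\phi \B(\GG_2^{\dagger v_2})$ together with a bending $\mathbb{R}$-action $\Psi_\delta$ simply transitive on its fibers. Composing $\mathrm{Cut}_\delta$ with $\Theta_1\times\Theta_2$ and restricting to the matched subspace produces the evaluation map $\Theta$ of $\GG$, whose target is obtained from $X(\GG_1^{\dagger v_1})\times X(\GG_2^{\dagger v_2})$ by imposing the matching condition $R_{\delta_1}=R_{\delta_2}$; this identifies either a single shared coordinate (when $W_\delta$ is flexible Lann\'er or $\tilde{A}_{d-1}$, so that $\delta$ contributes to $\mathcal{P}_{fL}$ or $\mathcal{P}_A$) or no coordinate (when $W_\delta$ is rigid), and the upshot is exactly $X(\GG)$. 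All the bendings assemble into an $\mathbb{R}^{k_e}=\mathbb{R}\times\mathbb{R}^{k_e-1}$-action $\Psi$; since the recorded cyclic products are internal to a single block (a prismatic circuit distinct from $\delta$, a vertex link, or a block circuit always lies on one side of $\delta$), and since a bending along $\delta$ fixes the $\delta$-reflections and every Cartan entry internal to each side, every coordinate of $\Theta$ is left invariant. Hence $\Theta$ is $\Psi$-invariant, and $\Psi$ is simply transitive on each fiber because each of its factors is.

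The main obstacle will be to check that the bending actions genuinely commute, so that $\Psi$ is an action of the abelian group $\mathbb{R}^{k_e}$ rather than merely a composite of one-parameter flows. The cleanest route is to define $\Psi_{\mathbf{u}}$ intrinsically by re-gluing the blocks $P_i^{\dagger}$ with the gluing parameter at each $\delta_j$ shifted by $u_j$, which is visibly an action of $(\mathbb{R}^{k_e},+)$ and agrees with the matrix description of Lemma~\ref{lem:gluing} on each one-parameter factor; geometrically, commutativity reflects the fact that the essential prismatic circuits are pairwise disjoint, so a bending along $\delta_j$ rescales only the Cartan entries linking the two sides of $\delta_j$ and leaves all other entries fixed, and distinct circuits touch disjoint sets of such cross-entries. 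The remaining bookkeeping---that the relevant circuits of the blocks correspond bijectively to the four families $\mathcal{P}_{fL},\mathcal{P}_A,\mathcal{V}_f,\mathcal{B}_c$ and that the relations defining $X(\GG)$ are precisely the coherent-orientation constraints of the $K_{2,3}$-type blocks---follows from the local analysis of a single split already carried out in Lemma~\ref{lem:gluing} and from Proposition~\ref{prop:deformation_block}.
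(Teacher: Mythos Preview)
Your proposal is correct and follows essentially the same approach as the paper, which simply states that the proposition follows from (the proof of) Proposition~\ref{prop:deformation_block} and Lemma~\ref{lem:gluing} without spelling out the induction. Your write-up supplies the details the paper omits---the induction on $k_e$, the bookkeeping identifying the target with $X(\GG)$, and the commutativity of the bending actions coming from disjointness of the essential prismatic circuits---all of which are the natural way to unpack that one-line citation.
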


\begin{proof}[Proof of Theorem \ref{MainTheorem}]
Let $\GG$ be a convex-projectivizable, irreducible, large, 2-perfect labeled truncation polytope of dimension $d \geqslant 4$. We denote by $e_+(\GG)$ the number of ridges with label $\neq \nicefrac{\pi}{2}$ in $\GG$, by $\mathcal{P}_e$ the set of essential prismatic circuits,  and by $ \{ \Ss_i^{\dagger \mathcal{V}_i} \}_{i=1}^{k_e + 1} $ the decomposition of $\GG$ along $\mathcal{P}_e$.

\medskip

We first claim that $d \leqslant 9$. Indeed, each $\Ss_i$ is an irreducible, large, 2-perfect $d$-simplex, and such a simplex exists only in dimension $d \leqslant 9$, by Theorem \ref{t:maxwell}, as claimed.

\medskip

Proposition \ref{prop:evaluation} shows that $\B(\GG)$ is a union of finitely many open cells and that $\B(\GG)$ is connected if and only if $k_A(\GG) = 0$, i.e. $W_\delta$ is Lann\'{e}r for each prismatic circuit $\delta$ of $\GG$. This is equivalent to require that $\GG$ is hyperbolizable, by Theorem \ref{thm:geometrization}.

\medskip

We finally compute the dimension of $\B(\GG)$. Again, by Proposition \ref{prop:evaluation}, we have that
$$\dim \B(\GG) = k_L(\GG) + k_A(\GG) + k_v(\GG) + k_c(\GG) - k_K(\GG) + k_e(\GG)$$
We now prove that $\dim \B(\GG) = e_+(\GG) - d$ by induction on the number $k_e$ of essential prismatic circuits. If $k_e(\GG) = 0$, then $\GG = \Ss_i^{\dagger \mathcal{V}_i}$ with $i=1$. Proposition \ref{prop:deformation_block} shows $\dim \B(\Ss_i^{\dagger \mathcal{V}_i}) = e_+(\Ss_i^{\dagger \mathcal{V}_i}) - d$, or it may readily verified as follows: 
\begin{itemize}
\item if $\Ss_i$ is of tree type, then $k_*(\Ss_i^{\dagger \mathcal{V}_i}) = 0$ for $* \in \{ L, A, v,c,  K, e \}$;
\item if $\Ss_i$ is of cycle type, then $k_c(\Ss_i^{\dagger \mathcal{V}_i})=1$ and $k_*(\Ss_i^{\dagger \mathcal{V}_i}) = 0$ for $* \in \{ L, A, v, K, e \}$;
\item if $\Ss_i$ is of pan type, then $(k_L+k_A+k_v)(\Ss_i^{\dagger \mathcal{V}_i})=1$ and $k_*(\Ss_i^{\dagger \mathcal{V}_i}) = 0$ for $* \in \{ c, K, e \}$;
\item if $\Ss_i$ is of $K_{2,3}$ type, then $(k_L+k_A+k_v)(\Ss_i^{\dagger \mathcal{V}_i})=3$, $k_K(\Ss_i^{\dagger \mathcal{V}_i}) = 1$ and $k_*(\Ss_i^{\dagger \mathcal{V}_i}) = 0$ for $* \in \{ c, e\}$.
\end{itemize}

\medskip

If $k_e(\GG) > 0$, then the polytope $\GG$ splits along $\delta \in \mathcal{P}_e$ into two polytopes $\GG_j$ $(j = 1,2)$ with $k_e(\GG_j) < k_e(\GG)$. There are two cases to consider: (i) $\delta$ is rigid, and (ii) $\delta$ is flexible.

\medskip

In the case (i), we have $e_+(\GG_1 \sharp \GG_2) = e_+(\GG_1) + e_+(\GG_2) - (d-1)$. Then 
\begin{align}
e_+(\GG) - d & =  (e_+(\GG_1) - d) + (e_+(\GG_2) - d) + 1 \nonumber\\
& = \dim \B(\GG_1) + \dim \B(\GG_2) + 1  \quad\quad \textrm{by induction hypothesis} \nonumber\\
& = k_L(\GG) + k_A(\GG) + k_v(\GG) + k_c(\GG) - k_K(\GG) + k_e(\GG) \nonumber \\
& = \dim \B(\GG) \nonumber
\end{align}
The second last equality follows from the fact that $k_{e}(\GG) = k_e(\GG_1) + k_e(\GG_2) +1 $ and $k_{*}(\GG) = k_{*}(\GG_1) + k_{*}(\GG_2)$ for $* \in \{ L, A, v, c, K \}$. 

\medskip

In the case (ii), we have $e_+(\GG_1 \sharp \GG_2) = e_+(\GG_1) + e_+(\GG_2) - d$. Then 
\begin{align}
e_+(\GG) - d & =  (e_+(\GG_1) - d) + (e_+(\GG_2) - d) \nonumber\\
& = \dim \B(\GG_1) + \dim \B(\GG_2) \quad\quad \textrm{by induction hypothesis} \nonumber\\
& = \dim \B(\GG) \nonumber
\end{align}
The last equality follows from the fact that $(k_{L}+k_{A})(\GG) = (k_{L}+k_{A})(\GG_1) + (k_{L}+k_{A})(\GG_2) - 1 $, $k_{e}(\GG) = k_{e}(\GG_1) + k_{e}(\GG_2) +1 $, and $k_{*}(\GG) = k_{*}(\GG_1) + k_{*}(\GG_2)$ for $* \in \{ v, c, K \}$.
\end{proof}

\section{Components of deformation space}\label{section:number_of_components}

The purpose of this section is to calculate the number $\kappa(\GG)$ of connected components of the deformation space of a labeled polytope in dimension $d \geqslant 4$. Let $W_{\exc}$ be the left Coxeter group in Figure \ref{fig:exampleA}, and $\Ss_{\mathrm{exc}} := \Ss_{W_{\exc}}$ the associated labeled simplex. The simplex $\Ss_{\exc}$ has two spherical vertices and three $\tilde{A}_3$ vertices. We denote by $\Ss_{\exc}^{\dagger}$ the once-truncated simplex obtained by truncating those three $\tilde{A}_3$ vertices. Proposition \ref{prop:deformation_block} shows that the deformation space of $\Ss_{\exc}^{\dagger}$ has $6$ connected components \emph{not $8=2^3$}, which makes it difficult to compute $\kappa(\GG)$. The technique we shall use is very similar to the one in \cite{MR2660566}.

\subsection{The forest of labeled truncation polytope}

Let $\GG$ be a convex-projectivizable, irreducible, large, 2-perfect labeled truncation polytope of dimension $d \geqslant 4$. We denote by $\mathcal{P}$ the set of prismatic circuits of $\GG$ (both essential and non-essential). The polytope $\GG$ splits along $\mathcal{P}$ into once-truncated simplices $\{ \mathcal{S}_{i}^{\dagger \mathcal{V}_i} \}_{i=1}^{k+1} $, where each $\mathcal{S}_i$ is a 2-perfect labeled simplex (not necessarily irreducible or large), $\mathcal{V}_i$ is the set of vertices in $\mathcal{S}_i$ that correspond to $\mathcal{P}$, and $k = \# \mathcal{P}$. If $v_i \in \mathcal{V}_i$ and $v_j \in \mathcal{V}_j$ both correspond to $\delta \in \mathcal{P}$, then $\delta$ is called the \emph{common prismatic circuit} of $\mathcal{S}_i^{\dagger \mathcal{V}_i}$ and $\mathcal{S}_j^{\dagger \mathcal{V}_j}$, and $\mathcal{S}_i^{\dagger \mathcal{V}_i}$ and $\mathcal{S}_j^{\dagger \mathcal{V}_j}$ \emph{share the prismatic circuit $\delta$}.

\medskip

We now introduce a tool to compute the number of connected components of $\B(\GG)$. The \emph{forest} $\EuScript{F}_{\GG}$  (resp. \emph{orange forest} $\EuScript{F}_{\GG}^{o}$, resp. \emph{green forest} $\EuScript{F}_{\GG}^{g}$) of $\GG$ is a graph with edge coloring such that:
\begin{itemize}
\item the set of nodes consists of all simplices $\mathcal{S}_i$ such that $\mathcal{S}_i^{\dagger \mathcal{V}_i}$ has a flexible (resp. $\tilde{A}_{d-1}$, resp. flexible Lann\'{e}r) prismatic circuit of $\GG$;

\item two nodes $\mathcal{S}_i$ and $\mathcal{S}_j$ are connected by an edge $\overline{\mathcal{S}_i\mathcal{S}_j}$ if and only if $\mathcal{S}_i^{\dagger \mathcal{V}_i}$ and $\mathcal{S}_j^{\dagger \mathcal{V}_j}$ share a flexible (resp. $\tilde{A}_{d-1}$, resp. flexible Lann\'{e}r) prismatic circuit; 

\item the edge $\overline{\mathcal{S}_i \mathcal{S}_j}$ is orange in color if the common prismatic circuit $\delta$ of $\mathcal{S}_i^{\dagger \mathcal{V}_i}$ and $\mathcal{S}_j^{\dagger \mathcal{V}_j}$ is $\tilde{A}_{d-1}$, and it is green if $\delta$ is flexible Lann\'{e}r.
\end{itemize}

Each node of $\EuScript{F}_{\GG}$ has valence $1$, $2$ or $3$. The orange forest $\EuScript{F}_{\GG}^{o}$ and the green forest $\EuScript{F}_{\GG}^{g}$ may be considered as subgraphs of $\EuScript{F}_{\GG}$, and their union is then $\EuScript{F}_{\GG}$. A function from the set of edges of a forest $\EuScript{F}$ to $\{ + , - \}$ is called a \emph{sign function} of $\EuScript{F}$. A sign function is \emph{balanced} if there exists no node $v$ of valence $3$ such that all three edges incident on $v$ have the same sign. A sign function of $\EuScript{F}_{\GG}^{o}$ is \emph{admissible} if it may be extended to a balanced sign function of $\EuScript{F}_{\GG}$.

\begin{lemma}\label{lem:admissible}
Let $\GG$ be a convex-projectivizable, irreducible, large, 2-perfect labeled truncation polytope of dimension $d \geqslant 4$. Then the number $\kappa(\GG)$ of connected components of $\B(\GG)$ equals the number of balanced sign functions of $\EuScript{F}_{\GG}^{o}$. 
\end{lemma}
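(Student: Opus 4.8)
The plan is to push the count of connected components through the evaluation fibration of Proposition~\ref{prop:evaluation} onto the explicit set $X(\GG)$, and then to detect $\pi_0(X(\GG))$ by the sign pattern of its $\tilde{A}_{d-1}$-coordinates. First I would record that, by Proposition~\ref{prop:evaluation}, the map $\Theta : \B(\GG) \to X(\GG)$ is a fibration whose fibers are the orbits of the simply transitive $\R^{k_e}$-action $\Psi$; in particular each fiber is homeomorphic to $\R^{k_e}$ and hence connected. A fibration with connected fibers induces a bijection on $\pi_0$ (surjectivity is clear, and injectivity follows from the homotopy lifting property together with the connectedness of the fibers), so $\kappa(\GG) = \#\pi_0(X(\GG))$ and the problem is reduced to counting the components of $X(\GG)$.

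Next I would analyze $X(\GG)$ as a subset of $Y(\GG) = \R^{k_L(\GG)} \times (\R^*)^{k_A(\GG)} \times \R^{k_v(\GG)} \times \R^{k_c(\GG)}$. The relations $\sum_{\ell} y_j^\ell = 0$ defining $X(\GG)$ involve only the coordinates indexed by the relevant circuits of $K_{2,3}$-blocks, that is, by elements of $\mathcal{P}_{fL} \cup \mathcal{P}_A \cup \mathcal{V}_f$; in particular the $k_c(\GG)$ cycle-type coordinates are unconstrained. The only coordinates confined to $\R^*$ are the $k_A(\GG)$ orange ones, which are exactly the edges of $\EuScript{F}_\GG^o$, so the vector of their signs is a locally constant function on $X(\GG)$ and defines a map $\mathrm{sgn} : \pi_0(X(\GG)) \to \{\text{sign functions of } \EuScript{F}_\GG^o\}$. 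For a fixed sign function $\epsilon$, the preimage $X_\epsilon := \mathrm{sgn}^{-1}(\epsilon)$ is the intersection of the linear subspace cut out by the $K_{2,3}$-relations with the open convex set obtained by restricting each orange coordinate to the open ray of sign $\epsilon$ and letting every other coordinate run over $\R$. As an intersection of two convex sets, $X_\epsilon$ is convex, hence connected whenever nonempty; since it is at the same time open and closed in $X(\GG)$, it is either empty or a single connected component. Thus $\mathrm{sgn}$ is injective, and it remains only to identify its image.

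Finally I would show that $X_\epsilon \neq \varnothing$ if and only if $\epsilon$ is balanced. The forward implication is immediate: a valence-$3$ node of $\EuScript{F}_\GG^o$ is a $K_{2,3}$-block all of whose three relevant circuits are $\tilde{A}_{d-1}$, and if its three incident edges shared a common sign then the relation $y_j^1 + y_j^2 + y_j^3 = 0$ would force three nonzero reals of equal sign to sum to zero, which is impossible. For the converse I would use that $\EuScript{F}_\GG$ is a forest, being a subgraph of the tree dual to the stacked polytope $\GG^*$, whose nodes are the blocks $\Ss_i$ and whose edges are the prismatic circuits. Rooting each component of $\EuScript{F}_\GG$, I assign coordinate values top-down: at each block at most one incident circuit---the edge toward the root, if the block is not a root---has already been assigned, and I solve the single relation $\sum_\ell y_j^\ell = 0$ for the remaining circuits. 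If any remaining circuit is green or a flexible vertex, its coordinate ranges over all of $\R$ and the relation is trivially solvable; the only delicate case is a fully orange $K_{2,3}$-block, where balancedness guarantees that the three prescribed signs are not all equal, and a single linear relation among variables constrained to prescribed nonzero signs that are not all equal always admits a solution. Because $\EuScript{F}_\GG$ is acyclic, no circuit is assigned twice, so the sweep produces a genuine point of $X_\epsilon$. Combining the three steps, $\mathrm{sgn}$ is a bijection from $\pi_0(X(\GG)) = \pi_0(\B(\GG))$ onto the set of balanced sign functions of $\EuScript{F}_\GG^o$, as claimed.

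The crux will be this last step: the $K_{2,3}$-relations are coupled because a single circuit may be shared by two blocks, and a naive simultaneous solution of the resulting linear system is awkward. The forest structure of $\EuScript{F}_\GG$ is exactly what decouples it into a single top-down sweep, so the essential point to secure is the acyclicity of the decomposition, after which the sign bookkeeping is routine.
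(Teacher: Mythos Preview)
Your reduction to $\pi_0(X(\GG))$ via Proposition~\ref{prop:evaluation} and the convexity argument showing that each sign locus $X_\epsilon$ is a single component when nonempty are correct and match the paper's strategy. The forward implication (nonempty $\Rightarrow$ balanced) is also fine.

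The gap is in the converse. In your top-down sweep you claim that ``the only delicate case is a fully orange $K_{2,3}$-block'', but your case split does not cover a $K_{2,3}$-block whose root-ward edge is \emph{green} and whose two remaining circuits are both \emph{orange}. In that situation neither remaining circuit is unconstrained, the block is not fully orange (so balancedness of $\epsilon$ on $\EuScript{F}_\GG^o$ says nothing about it), and the green value $c$ was already fixed at the parent without any look-ahead. You would then need $y_1+y_2=-c$ with two prescribed orange signs, which can fail if those signs coincide and $c$ has the wrong sign.

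The paper closes exactly this hole, and in two moves you are missing. First, it introduces the intermediate notion of \emph{admissible} sign function (one that extends to a balanced sign function on the full forest $\EuScript{F}_\GG$): with signs prescribed on the green edges as well, the sweep goes through at every valence-$3$ node because balancedness on $\EuScript{F}_\GG$ forces the three incident signs to be mixed. Second, to pass from ``balanced on $\EuScript{F}_\GG^o$'' to ``admissible'', the paper invokes the classification of $2$-Lann\'er $K_{2,3}$ diagrams (Table~\ref{Table_dim4}): the three degree-$2$ nodes are coloured $3$-orange, $1$-orange-$2$-green, or $3$-green, \emph{never} $2$-orange-$1$-green. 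This is precisely what rules out the configuration your argument misses; equivalently, it lets one extend any balanced $\epsilon$ by assigning alternating signs along $\EuScript{F}_\GG^g$, which then never conflicts with the orange signs at a valence-$3$ node. You should either invoke this classification fact explicitly, or adopt the paper's two-step route through admissibility.
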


\begin{proof}
We consider the evaluation map $\Theta : \B(\GG) \rightarrow X(\GG)$ defined in Section \ref{subsec:evaluation}. In particular, the evaluation of $[P] \in \B(\GG)$ on the circuits that correspond to the $\tilde{A}_{d-1}$ prismatic circuits is positive or negative. So, it gives us a balanced sign function $\phi^{o}_{[P]}$ of $\EuScript{F}_{\GG}^{o}$. Then $[P]$ and $[Q]$ lie in the same connected component of $\B(\GG)$ if and only if $\phi^{o}_{[P]} = \phi^{o}_{[Q]}$. Given a balanced sign function $\phi^{o}$ of $\EuScript{F}_{\GG}^{o}$, there exists $[P] \in \B(\GG)$ such that $\phi^{o}_{[P]} = \phi^{o}$, when $\phi^{o}$ is \emph{admissible}. Thus $\kappa(\GG)$ equals the number of admissible sign functions of $\EuScript{F}_{\GG}^{o}$.

\medskip

Now, it only remains to show that any balanced sign functions of $\EuScript{F}_{\GG}^{o}$ is admissible. Let $\psi^{o}$ be any balanced sign function of $\EuScript{F}_{\GG}^{o}$. Since $\EuScript{F}_{\GG}^{g}$ is a forest, we can define a balanced sign function $\psi^{g}$ of $\EuScript{F}_{\GG}^{g}$ so that for any node $v$ of valence $2$, two edges incident on $v$ have different signs. Combining $\psi^{o}$ and $\psi^{g}$, we obtain a balanced sign function of $\EuScript{F}_{\GG}$, since each vertex of valence $3$ in $\EuScript{F}_{\GG}$ is incident to three edges in $\EuScript{F}_{\GG}^{g}$, to three edges in $\EuScript{F}_{\GG}^{o}$, or to two edges in $\EuScript{F}_{\GG}^{g}$ and one edge in $\EuScript{F}_{\GG}^{o}$, by the classification of 2-Lannér Coxeter groups (see Theorem \ref{t:maxwell}).
\end{proof}

We denote by $n_{2}(\GG)$ (resp. $n_{3}(\GG)$) the number of nodes of valence $2$ (resp. $3$) in $\EuScript{F}_{\GG}^{o}$, and by $n_c(\GG)$ the number of connected components of $\EuScript{F}_{\GG}^{o}$ (see Figure \ref{fig:forest}).

\begin{theorem}\label{thm:number_of_components}
Let $\GG$ be a convex-projectivizable, irreducible, large, 2-perfect labeled truncation polytope of dimension $d \geqslant 4$. Then the number $\kappa(\GG)$ of connected components of $\B(\GG)$ is 
$$ 2^{n_2(\GG) + n_c(\GG)} \cdot 3^{n_3(\GG)}$$   
\end{theorem}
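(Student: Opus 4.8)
The plan is to translate the statement, via Lemma~\ref{lem:admissible}, into a purely graph-theoretic count and then to evaluate that count by a rank computation over $\mathbb{F}_2$. By Lemma~\ref{lem:admissible}, $\kappa(\GG)$ equals the number of balanced sign functions of the orange forest $\EuScript{F} := \EuScript{F}_{\GG}^{o}$, and recall that every node of $\EuScript{F}$ has valence at most $3$. Writing $E$ for the number of edges of $\EuScript{F}$ and encoding a sign function as a vector $x \in \mathbb{F}_2^{E}$ (with $+ \leftrightarrow 0$ and $- \leftrightarrow 1$), a valence-$3$ node $v$ with incident edges $e_1, e_2, e_3$ is monochromatic exactly when $x$ satisfies the two $\mathbb{F}_2$-linear equations $x_{e_1} + x_{e_2} = 0$ and $x_{e_2} + x_{e_3} = 0$; a sign function is balanced precisely when no valence-$3$ node is monochromatic.

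First I would apply inclusion--exclusion over the set $V_3$ of valence-$3$ nodes of $\EuScript{F}$. For $I \subseteq V_3$, the sign functions that are monochromatic at every node of $I$ form a linear subspace $L_I \subseteq \mathbb{F}_2^{E}$ (the all-$+$ and all-$-$ vectors satisfy every equation), of dimension $E - \mathrm{rk}(I)$, where $\mathrm{rk}(I)$ is the rank of the system of $2|I|$ equations associated with $I$. Hence $\#L_I = 2^{E - \mathrm{rk}(I)}$, and inclusion--exclusion gives that the number of balanced sign functions is $\sum_{I \subseteq V_3} (-1)^{|I|}\, 2^{E - \mathrm{rk}(I)}$.

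The crux of the argument — and the step I expect to be the main obstacle — is the claim that $\mathrm{rk}(I) = 2|I|$ for every $I \subseteq V_3$, i.e. that these equations are always linearly independent; this is exactly where the tree structure of $\EuScript{F}$ is used. A linear relation among the constraint covectors amounts to a choice, for each $v \in I$, of either the zero covector or one of the three covectors $x_{e_i} + x_{e_j}$ supported on two of the three edges at $v$, whose total sum vanishes in $\mathbb{F}_2^{E}$. To rule out a nontrivial such choice I would argue by induction on $|I|$: since any subgraph of a forest is a forest, the subgraph that $\EuScript{F}$ induces on $I$ has (for $I \neq \varnothing$) a node $v$ of degree at most $1$, so at most one of the three edges of $v$ in $\EuScript{F}$ leads to another node of $I$, and the remaining (at least two) edges appear in no constraint other than that of $v$. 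For the total to vanish, the covector chosen at $v$ must avoid these private edges; but a covector supported on two of the three edges at $v$ cannot avoid two of them, so the covector at $v$ is zero. Deleting $v$ and applying the induction hypothesis to $I \smallsetminus \{v\}$ then shows that all chosen covectors vanish, proving linear independence.

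Granting $\mathrm{rk}(I) = 2|I|$, the sum collapses to
$$\sum_{I \subseteq V_3} (-1)^{|I|}\, 2^{E - 2|I|} = 2^{E}\Big(1 - \tfrac14\Big)^{n_3(\GG)} = 2^{E}\,\Big(\tfrac34\Big)^{n_3(\GG)}.$$
It then remains to express $E$ through the valence data. Since $\EuScript{F}$ is a forest with $n_c(\GG)$ connected components and maximum valence $3$, writing $n_1$ for its number of leaves, the handshake identity $n_1 + 2\,n_2(\GG) + 3\,n_3(\GG) = 2E$ together with the Euler relation $n_1 + n_2(\GG) + n_3(\GG) = E + n_c(\GG)$ yields $E = n_2(\GG) + 2\,n_3(\GG) + n_c(\GG)$. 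Substituting this into the displayed expression gives $2^{E}(3/4)^{n_3(\GG)} = 2^{\,n_2(\GG) + n_c(\GG)}\, 3^{\,n_3(\GG)}$, which is the asserted value of $\kappa(\GG)$.
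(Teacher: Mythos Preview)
Your argument is correct. It takes a genuinely different route from the paper's proof. The paper reduces to the connected components $\EuScript{F}_{\GG,j}^{o}$ of the orange forest and asserts directly (``it is easy to see'') that the number of balanced sign functions on a single tree equals $2^{n_{2,j}+1}\cdot 3^{n_{3,j}}$; multiplying over components gives the formula. Implicitly this is a tree-traversal count: root the tree anywhere, assign the first edge freely (factor $2$), and then each valence-$2$ node contributes a free binary choice while each valence-$3$ node contributes $3$ admissible choices for its remaining two edges. Your approach instead treats all valence-$3$ constraints at once via inclusion--exclusion over $\mathbb{F}_2$, establishes the key rank identity $\mathrm{rk}(I)=2|I|$ by peeling off a vertex of small induced degree (this is exactly where acyclicity enters), and then converts $2^{E}(3/4)^{n_3}$ into the stated form using the handshake and Euler relations for a forest with no isolated vertices (which holds here since every $\tilde{A}_{d-1}$ prismatic circuit is shared by two blocks). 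The paper's argument is shorter and more hands-on; yours is more systematic, fully justifies the combinatorial count that the paper leaves implicit, and would generalize more readily to other constraint patterns. One cosmetic remark: your parenthetical ``the all-$+$ and all-$-$ vectors satisfy every equation'' is not needed to see that $L_I$ is a linear subspace, since the equations are homogeneous over $\mathbb{F}_2$.
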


\begin{proof}
Let $\left\{ \EuScript{F}_{\GG,j}^{o} \right\}_{j=1}^{n_c(\GG)}$ be the set of connected components of $\EuScript{F}_{\GG}^{o}$. It is easy to see that the number of balanced sign functions of $\EuScript{F}_{\GG,j}^{o}$ equals $ 2^{n_{2,j}+1}  \cdot 3^{n_{3,j}}$, where $n_{i,j}$ $(i=2,3)$ denote the number of nodes of valence $i$ of $\EuScript{F}_{\GG,j}^{o}$. Thus the number of balanced sign functions of $\EuScript{F}_{\GG}^{o}$ equals
$$ \prod_{j=1}^{ n_c(\GG) } 2^{n_{2,j}+1}  \cdot 3^{n_{3,j}} \,=\, 2^{n_2(\GG) + n_c(\GG)} \cdot 3^{n_3(\GG)},$$
since $\sum_j n_{i,j} = n_{i}(\GG)$. Our theorem follows from Lemma \ref{lem:admissible}.
\end{proof}

\begin{figure}[ht!]
\begin{minipage}{0.2\textwidth}
\centering
\begin{tikzpicture}[scale=0.6, every node/.style={transform shape}] 
\node[draw,circle] (A) at (0,0){};
\node[draw,circle] (B) at (1.4,0){};
\node[draw,circle] (C) at (2.1,1.2){};
\node[draw,circle] (D) at (2.1,-1.2){};
\node[draw,circle] (E) at (3.5,-1.2){};
\node[draw,circle] (F) at (1.4,-2.4){};
\node[draw,circle] (G) at (4.9,-1.2){};
\node[draw,circle] (H) at (5.6,0){};
\node[draw,circle] (I) at (5.6,-2.4){};
\node[draw,circle] (J) at (5.6,-3.8){};
\node[draw,circle] (K) at (5.6,-5.2){};
\node[draw,circle] (L) at (1.4,-3.8){};
\node[draw,circle] (M) at (0.2,-4.5){};
\node[draw,circle] (N) at (-1.0,-3.8){};
\node[draw,circle] (O) at (2.6,-4.5){};
\draw[orange,ultra thick] (A) -- (B) node[above,midway] {};
\draw[orange,ultra thick] (B) -- (C) node[above,midway] {};
\draw[orange,ultra thick] (B) -- (D) node[above,midway] {};
\draw[orange,ultra thick] (D) -- (E) node[above,midway] {};
\draw[orange,ultra thick] (D) -- (F) node[above,midway] {};
\draw[orange,ultra thick] (E) -- (G) node[above,midway] {};
\draw[orange,ultra thick] (G) -- (H) node[above,midway] {};
\draw[orange,ultra thick] (G) -- (I) node[above,midway] {};
\draw[orange,ultra thick] (I) -- (J) node[above,midway] {};
\draw[orange,ultra thick] (J) -- (K) node[above,midway] {};
\draw[green_okabe_ito,ultra thick] (F) -- (L) node[above,midway] {};
\draw[green_okabe_ito,ultra thick] (L) -- (M) node[above,midway] {};
\draw[green_okabe_ito,ultra thick] (M) -- (N) node[above,midway] {};
\draw[orange,ultra thick] (L) -- (O) node[above,midway] {};
\end{tikzpicture}
\end{minipage}
\quad\quad\quad\quad\quad\quad
\begin{minipage}{0.2\textwidth}
\centering
\begin{tikzpicture}[scale=0.6, every node/.style={transform shape}] 
\node[draw,circle] (A) at (0,0){};
\node[draw,circle] (B) at (1.4,0){};
\node[draw,circle] (C) at (2.1,1.2){};
\node[draw,circle] (D) at (2.1,-1.2){};
\node[draw,circle] (E) at (3.5,-1.2){};
\node[draw,circle] (E2) at (3.5,-1.2){};
\node[draw,circle] (F) at (1.4,-2.4){};
\node[draw,circle] (G) at (4.9,-1.2){};
\node[draw,circle] (H) at (5.6,0){};
\node[draw,circle] (I) at (5.6,-2.4){};
\node[draw,circle] (J) at (5.6,-3.8){};
\node[draw,circle] (K) at (5.6,-5.2){};
\node[draw,circle] (L) at (1.4,-3.8){};
\node[draw,circle] (O) at (2.6,-4.5){};
\draw[orange,ultra thick] (A) -- (B) node[above,midway] {};
\draw[orange,ultra thick] (B) -- (C) node[above,midway] {};
\draw[orange,ultra thick] (B) -- (D) node[above,midway] {};
\draw[orange,ultra thick] (D) -- (E) node[above,midway] {};
\draw[orange,ultra thick] (D) -- (F) node[above,midway] {};
\draw[orange,ultra thick] (E) -- (G) node[above,midway] {};
\draw[orange,ultra thick] (G) -- (H) node[above,midway] {};
\draw[orange,ultra thick] (G) -- (I) node[above,midway] {};
\draw[orange,ultra thick] (I) -- (J) node[above,midway] {};
\draw[orange,ultra thick] (J) -- (K) node[above,midway] {};
\draw[orange,ultra thick] (L) -- (O) node[above,midway] {};
\end{tikzpicture}
\end{minipage}
\caption{The forest $\EuScript{F}_{\GG}$ and the orange forest $\EuScript{F}_{\GG}^{o}$ of a labeled polytope $\GG$. The number $\kappa(\GG)$ equals $2^5\cdot 3^3$, since $n_2(\GG)=3$, $n_3(\GG)=3$ and $n_c(\GG)=2$.}
\label{fig:forest}
\end{figure}

\section{Dimension $\geqslant 6$}\label{section:higherDim}

Due to the rarity of 2-perfect labeled simplices in dimension $d > 5$, it is easy to describe the deformation spaces of each individual 2-perfect labeled polytope in those dimension. So, we exhibit them in the decreasing order of dimension $d$.

\subsection{Dimension beyond $9$}

By Theorem \ref{MainTheorem}, there exists no convex-projectivizable, irreducible, large, 2-perfect labeled truncation polytope in dimension $d > 9$.  

\subsection{Dimension $9$}

There are three 2-Lannér Coxeter groups of rank $10$ (see Theorem \ref{t:maxwell} and Table \ref{Table_dim9}). All are of tree type, and quasi-Lannér not Lannér. We denote them by $W_{9i}$ $(i = 1, 2, 3)$ and let $\Ss_{9i} := \Ss_{W_{9i}}$. 

\begin{theorem}\label{classi_dim9}
In dimension $9$, there exist only three convex-projectivizable, irreducible, large, 2-perfect labeled truncation polytopes: $\Ss_{9i}$ $(i=1,2,3)$. Each labeled simplex $\Ss_{9i}$ is hyperbolizable and rigid.
\end{theorem}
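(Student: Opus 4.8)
The plan is to reduce the statement to the structure of the three rank-$10$ groups and then feed this into the decomposition machinery of Section~\ref{section:gluing_splitting}. First I would record the input from Maxwell's classification (Theorem~\ref{t:maxwell} and Table~\ref{Table_dim9}): the only $2$-Lann\'er Coxeter groups of rank $10$ are $W_{91}, W_{92}, W_{93}$, each of which is quasi-Lann\'er but not Lann\'er and of tree type. Two elementary consequences will carry the whole argument. Since each $W_{9i}$ is quasi-Lann\'er, for every generator $s$ the standard subgroup $W_{S \smallsetminus \{s\}}$ is spherical or irreducible affine, hence never Lann\'er. Since each $W_{9i}$ is of tree type, the underlying graph of its Coxeter diagram is a tree, so deleting any node leaves a forest; in particular $W_{S \smallsetminus \{s\}}$ has no cyclic component and therefore is never $\tilde{A}_8$.

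Next I would prove that every $\GG$ as in the statement is a simplex. By the geometrization argument in the proof of Theorem~\ref{thm:geometrization}, such a $\GG$ decomposes along its essential prismatic circuits into once-truncated simplices $\{ \Ss_i^{\dagger \V_i} \}_{i=1}^{k_e+1}$, where each block $\Ss_i$ is an irreducible, large, $2$-perfect labeled $9$-simplex, hence (Lemma~\ref{prop:label_simplex}) a $2$-Lann\'er group of rank $10$, that is, one of the $W_{9i}$ of tree type. Every vertex $v \in \V_i$ is a truncatable vertex of its block, so by Corollary~\ref{cor:truncatable} (equivalently Lemma~\ref{lemma:property:prismatic}) its link $W_{(\Ss_i)_v} = W_{S \smallsetminus \{s_v\}}$ is Lann\'er or $\tilde{A}_8$. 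By the previous paragraph neither can occur in a tree-type quasi-Lann\'er group, so $\V_i = \varnothing$ for every $i$. Equivalently the set $\mathcal{P}$ of prismatic circuits of $\GG$ is empty, so $k_e = 0$ and $\GG = \Ss_1 = \Ss_{9i}$ for some $i$.

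It remains to dispatch hyperbolizability and rigidity for each $\Ss_{9i}$. Being a simplex, $\Ss_{9i}$ has no prismatic circuit, so the hyperbolizability criterion of Theorem~\ref{thm:geometrization} is satisfied vacuously; concretely, the Tits simplex $\Delta_{W_{9i}}$ is a hyperbolic Coxeter simplex because $W_{9i}$ is $2$-Lann\'er, whence its Tits form is nondegenerate of signature $(9,1)$ by Remark~\ref{remark:hyperbolic}. For rigidity, since $W_{9i}$ is of tree type there is no relevant circuit, so the tree-type case in the proof of Theorem~\ref{t:moduli_simplex} gives $\B(\Ss_{9i}) = \{[\Delta_{W_{9i}}]\}$; equivalently a tree on $10$ nodes has $e_+ = 9 = d$ edges, so the cell dimension is $b(\Ss_{9i}) = e_+ - d = 0$.

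The only non-formal step is the exclusion of truncatable vertices inside the blocks, which rests entirely on the two structural facts about tree-type quasi-Lann\'er groups. I would therefore make sure, by inspecting the rank-$10$ diagrams in Appendix~\ref{classi_lorent}, that they are genuinely trees carrying no $\infty$-labeled edge, so that both ``no Lann\'er or $\tilde{A}_8$ link'' and ``no relevant circuit'' apply as stated; everything else is bookkeeping over the finite list of three diagrams.
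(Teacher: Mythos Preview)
Your proposal is correct and follows essentially the same route as the paper: decompose $\GG$ along its essential prismatic circuits into blocks, observe that each block must be one of $\Ss_{9i}$, and rule out any prismatic circuit by showing that no vertex link of $\Ss_{9i}$ is Lann\'er or $\tilde{A}_8$. Your justification of this last point via ``quasi-Lann\'er $\Rightarrow$ no Lann\'er link'' and ``tree type $\Rightarrow$ no $\tilde{A}_8$ link'' is more explicit than the paper's, which simply reads the vertex types off Table~\ref{Table_dim9}; likewise your appeals to Theorem~\ref{t:moduli_simplex} and Remark~\ref{remark:hyperbolic} unpack what the paper packages into a single citation of Theorem~\ref{MainTheorem}.
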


\begin{proof}
Let $\GG$ be a convex-projectivizable, irreducible, large, 2-perfect labeled truncation polytope of dimension $9$. We denote by $\mathcal{P}_e$ the set of essential prismatic circuits of $\GG$ and by $ \{ \Ss_j^{\dagger \mathcal{V}_j} \}_{j=1}^{k_e + 1} $ the decomposition of $\GG$ along $\mathcal{P}_e$. Then each $\Ss_j$ equals $\Ss_{9i}$ for $i \in \{ 1,2,3\}$. Every vertex of $\Ss_{9i}$ is either spherical, or affine but not $\tilde{A}$. Thus $\mathcal{P}_e = \varnothing$, by Lemma \ref{lemma:property:prismatic}, and $\GG = \Ss_{9i}$ for $i \in \{ 1,2,3\}$. Each simplex $\Ss_{9i}$ is hyperbolizable and rigid, by Theorem \ref{MainTheorem}.
\end{proof}

\subsection{Dimension $8$}\label{subsec:dim8}

There are four 2-Lannér Coxeter groups of rank $9$ (see Theorem \ref{t:maxwell} and Table \ref{Table_dim8}). Three of them, $W_{8i}$ $(i=1,2,3)$, are of tree type and one of them, $W_{8\tau}$, is of pan type. All are quasi-Lannér but not Lannér. We set $\Ss_{8i} := \Ss_{W_{8i}}$ and $\Ss_{8\tau} := \Ss_{W_{8\tau}}$.

\begin{theorem}\label{t:final8}
In dimension $8$, there exist ten convex-projectivizable, irreducible, large, 2-perfect labeled truncation polytopes: $\Ss_{8i}$ $(i =1, 2, 3)$, $\Ss_{8\tau}$, $\Ss_{8\tau}^{\dagger}$ and $\Ss_{8\tau}^{\dagger} \sharp_{\phi_j} \Ss_{8\tau}^{\dagger}$ $(j=1, \dots, 5)$.\footnote{The definition of labeled polytopes $\Ss_{8\tau}^{\dagger}$ and $\Ss_{8\tau}^{\dagger} \sharp_{\phi_j} \Ss_{8\tau}^{\dagger}$ ($j=1, \dotsc, 5$) is given in the proof.}
\begin{enumerate}
\item each simplex $\Ss_{8i}$ $(i = 1, 2, 3)$ is hyperbolizable and rigid;
\item the simplex $\Ss_{8\tau}$ is hyperbolizable, and $\B(\Ss_{8\tau}) \simeq \R$;\footnote{By $X \simeq Y$, we mean that two spaces $X$ and $Y$ are homeomorphic.}
\item the prism $\Ss_{8\tau}^{\dagger}$ is not hyperbolizable, and $\B(\Ss_{8\tau}^{\dagger}) \simeq \R^{\star}$;
\item each prism $\Ss_{8\tau}^{\dagger}\sharp_{\phi_j} \Ss_{8\tau}^{\dagger}$ $(j=1, \dotsc, 5)$ is not hyperbolizable, and $\B(\Ss_{8\tau}^{\dagger}\sharp_{\phi_j}  \Ss_{8\tau}^{\dagger}) \simeq \R^{\star} \times \R$.
\end{enumerate}
\end{theorem}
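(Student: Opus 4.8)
The plan is to run the general machinery of Sections \ref{section:gluing_splitting} and \ref{section:number_of_components} on the very short list of rank-$9$ blocks supplied by Theorem \ref{t:maxwell} and Table \ref{Table_dim8}. The only $2$-Lann\'er Coxeter groups of rank $9$ are the three tree-type groups $W_{8i}$ $(i=1,2,3)$ and the pan-type group $W_{8\tau}$, all quasi-Lann\'er but not Lann\'er. Since Lann\'er simplices occur only in dimensions $2,3,4$, there is no Lann\'er Coxeter group of rank $8$, so Corollary \ref{cor:truncatable} shows that the only truncatable vertices available in dimension $8$ are of type $\tilde{A}_7$. Every convex-projectivizable, irreducible, large, $2$-perfect labeled truncation $8$-polytope $\GG$ decomposes along its essential prismatic circuits into once-truncated $8$-simplices $\{\Ss_j^{\dagger\mathcal{V}_j}\}$ with each $\Ss_j\in\{\Ss_{81},\Ss_{82},\Ss_{83},\Ss_{8\tau}\}$, so the enumeration reduces to deciding which blocks admit truncatable vertices and how many distinct gluings they produce.

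First I would dispose of the blocks themselves. For a tree-type block $\Ss_{8i}$ there is no relevant circuit, so Theorem \ref{t:moduli_simplex} gives that $\B(\Ss_{8i})$ is a singleton (rigidity) and Remark \ref{remark:hyperbolic} gives hyperbolizability. Moreover, deleting a single node from a tree leaves a forest, never an $8$-cycle, so $\Ss_{8i}$ has no $\tilde{A}_7$ vertex and hence, there being no Lann\'er vertices, no truncatable vertex at all; thus a tree block can be neither truncated nor glued and occurs only as a standalone polytope, which settles (1). For the pan $\Ss_{8\tau}$, Theorem \ref{t:moduli_simplex} gives $\B(\Ss_{8\tau})\simeq\R$ and Remark \ref{remark:hyperbolic} gives hyperbolizability, settling (2); deleting its single pendant node leaves exactly the $8$-cycle $\tilde{A}_7$, so $\Ss_{8\tau}$ has a unique truncatable vertex, of type $\tilde{A}_7$.

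Next I would treat the prism and the gluings. Truncating this unique vertex and applying Proposition \ref{prop:deformation_block} in the pan case ($k_A=1$, two cells of dimension $b(\Ss_{8\tau})=1$) yields $\B(\Ss_{8\tau}^{\dagger})\simeq\R^{\star}$, the point removed being where the cyclic product of the $\tilde{A}_7$ circuit vanishes; since that circuit is $\tilde{A}_7$ and not Lann\'er, Theorem \ref{thm:geometrization} shows $\Ss_{8\tau}^{\dagger}$ is not hyperbolizable, giving (3). Because $\Ss_{8\tau}$ carries only one truncatable vertex and gluing creates no new $\tilde{A}_7$ vertex, no chain of more than two pans can form, so the only remaining polytopes are the gluings $\Ss_{8\tau}^{\dagger}\sharp_{\phi}\Ss_{8\tau}^{\dagger}$. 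Each is glued along the shared $\tilde{A}_7$ circuit $\delta$, which is essential because the facet of $\Ss_{8\tau}$ opposite the truncated vertex is joined to the cycle by an edge of label $\geqslant 3$ and so is not orthogonal to $\delta$. For the deformation space I would invoke Proposition \ref{prop:evaluation}: here $k_e=1$ while the only contribution to $X(\GG)$ is the single $\tilde{A}_7$ circuit, so $X(\GG)=\R^{\star}$ and $\B(\Ss_{8\tau}^{\dagger}\sharp_{\phi}\Ss_{8\tau}^{\dagger})\simeq\R^{\star}\times\R$, with non-hyperbolizability again from Theorem \ref{thm:geometrization}; this gives (4) once the five classes $\phi_j$ are identified.

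The main obstacle is the combinatorial count of exactly five gluings. The gluing $\phi$ is a label-preserving isomorphism between the two $\tilde{A}_7$-simplex links, that is, an element of the dihedral symmetry group of the shared $8$-cycle, and the resulting labeled polytope is completely recorded by the positions of the two pendant tails on that cycle. Two gluings produce isomorphic polytopes precisely when these position pairs agree up to the dihedral symmetry of the $8$-cycle together with the swap of the two identical blocks, so the complete invariant is the cyclic distance between the two tails, which lies in $\{0,1,2,3,4\}$. I expect the delicate point to be verifying that each of these five distances is realized by an admissible gluing and that distinct distances yield non-isomorphic polytopes, using the reflection symmetry of the pan fixing its tail to collapse $\{0,\dots,7\}$ onto $\{0,\dots,4\}$; granting this there are exactly $\phi_1,\dots,\phi_5$, and the total count is $3+1+1+5=10$.
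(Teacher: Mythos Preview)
Your proposal is correct and follows the same approach as the paper: decompose along essential prismatic circuits, observe that the only rank-$9$ blocks are the three tree-type $\Ss_{8i}$ (with no truncatable vertex) and the pan $\Ss_{8\tau}$ (with a single $\tilde{A}_7$ vertex), and conclude that $k_e\leqslant 1$, then read off the deformation spaces from Theorem \ref{MainTheorem}. Your argument is in fact more complete than the paper's in one respect: the paper simply refers to Table \ref{tab:excep_8prism} for the five gluings, whereas you give the combinatorial justification via the cyclic distance of the two pendant tails on the shared $8$-cycle modulo its dihedral symmetry and the block swap, which is exactly the right invariant.
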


\begin{proof}
Let $\GG$ be a convex-projectivizable, irreducible, large, 2-perfect labeled truncation polytope of dimension $8$. We denote by $ \{ \Ss_j^{\dagger \mathcal{V}_j} \}_{j=1}^{k_e+1} $ the decomposition of $\GG$ along the essential prismatic circuits. Then each $\Ss_j$ is equal to $\Ss_{8i}$ $(i=1,2,3)$ or $\Ss_{8\tau}$. There are two cases to consider: (i) one of $\Ss_j$ equals $\Ss_{8i}$ $(i=1,2,3)$ and (ii) all $\Ss_j$ equal $\Ss_{8\tau}$.

\medskip

In case (i), $\GG = \Ss_{8i}$ for $i \in \{ 1,2,3\}$, as in the proof of Theorem \ref{classi_dim9}.

\medskip

In case (ii), only one vertex of $\Ss_{8\tau}$, say $v$, is $\tilde{A}_7$, and the other vertices are either spherical or affine but not $\tilde{A}_7$.
Thus $k_e = 0$ or $1$, by Lemma \ref{lemma:property:prismatic}. If $k_e = 0$, then $\GG = \Ss_{8\tau}$ or $\Ss_{8\tau}^{\dagger} := \Ss_{8\tau}^{\dagger v}$. Otherwise, $\GG = \Ss_{8\tau}^{\dagger} \sharp_{\phi_j} \Ss_{8\tau}^{\dagger}$ for $j=1, \dotsc, 5$ (see Table \ref{tab:excep_8prism} for their Coxeter groups). Here, $\phi_j$ indicates that there exist five different gluing of two copies of $\Ss_{8\tau}^{\dagger}$. 

\medskip

Theorem \ref{MainTheorem} completes the proof.
\end{proof}

\subsection{Dimension $7$}

There are four 2-Lannér Coxeter groups of rank $8$ (see Theorem \ref{t:maxwell} and Table \ref{Table_dim7}). Three of them, $W_{7i}$ $(i=1,2,3)$, are of tree type and one of them, $W_{7\tau}$, is of pan type. All are quasi-Lannér but not Lannér. We set $\Ss_{7i} := \Ss_{W_{7i}}$ and $\Ss_{7\tau} := \Ss_{W_{7\tau}}$. The situation is very similar to the one in dimension $8$:

\begin{theorem}\label{t:final7}
In dimension $7$, there exist nine convex-projectivizable, irreducible, large, 2-perfect labeled truncation polytopes: $\Ss_{7i}$ $(i=1,2,3)$, $\Ss_{7\tau}$, $\Ss_{7\tau}^{\dagger}$ and $\Ss_{7\tau}^{\dagger}\sharp_{\phi_j} \Ss_{7\tau}^{\dagger}$ $(j=1,\dots,4)$.\footnote{The definition of polytopes $\Ss_{7\tau}^{\dagger}$ and $\Ss_{7\tau}^{\dagger} \sharp_{\phi_j} \Ss_{7\tau}^{\dagger}$ is analogous to the one in the proof of Theorem \ref{t:final8}.}
\begin{enumerate}
\item each simplex $\Ss_{7i}$ $(i=1,2,3)$ is hyperbolizable and rigid;
\item the simplex $\Ss_{7\tau}$ is hyperbolizable, and $\B(\Ss_{7\tau}) \simeq \R$;
\item the prism $\Ss_{7\tau}^{\dagger}$ is not hyperbolizable, and $\B(\Ss_{7\tau}^{\dagger}) \simeq \R^{\star}$;
\item each prism $\Ss_{7\tau}^{\dagger} \sharp_{\phi_j} \Ss_{7\tau}^{\dagger}$ $(j=1,\dots, 4)$ is not hyperbolizable, and $\B(\Ss_{7\tau}^{\dagger}\sharp_{\phi_j} \Ss_{7\tau}^{\dagger}) \simeq \R^{\star} \times \R$.
\end{enumerate}
\end{theorem}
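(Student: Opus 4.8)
The plan is to closely follow the structure of the proof of Theorem \ref{t:final8}, replacing the rank-$9$ data by the rank-$8$ data of Table \ref{Table_dim7}; the situation is parallel because, as recorded above, the four $2$-Lannér groups of rank $8$ split into three tree-type ones ($W_{7i}$) and one pan-type one ($W_{7\tau}$), mirroring dimension $8$. First I would take an arbitrary convex-projectivizable, irreducible, large, $2$-perfect labeled truncation polytope $\GG$ of dimension $7$ and, using Theorem \ref{thm:geometrization}, write its decomposition along the essential prismatic circuits as $\{\Ss_j^{\dagger \mathcal{V}_j}\}_{j=1}^{k_e+1}$. By Lemma \ref{prop:label_simplex} each block $\Ss_j$ carries a $2$-Lannér Coxeter group of rank $8$, so by Theorem \ref{t:maxwell} each $\Ss_j$ equals $\Ss_{7i}$ $(i=1,2,3)$ or $\Ss_{7\tau}$.

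I would then split into the same two cases as in dimension $8$. In case (i) some block is a tree-type simplex $\Ss_{7i}$; since $\Ss_{7i}$ is quasi-Lannér of tree type, every vertex is spherical or affine of tree type, so no vertex is $\tilde{A}_6$ or Lann\'er, and Lemma \ref{lemma:property:prismatic} forces $\mathcal{P}_e = \varnothing$. Hence $\GG = \Ss_{7i}$, exactly as in the proof of Theorem \ref{classi_dim9}. In case (ii) every block is $\Ss_{7\tau}$; reading its diagram shows that exactly one vertex $v$ has link $\tilde{A}_6$, while all others are spherical or affine of tree type. Since an essential prismatic circuit must be Lann\'er or $\tilde{A}_6$ by Lemma \ref{lemma:property:prismatic}, and each copy of $\Ss_{7\tau}$ supplies a single $\tilde{A}_6$ vertex, each block can participate in at most one gluing; the gluing graph therefore has maximal valence $1$, so at most two blocks occur and $k_e \in \{0,1\}$. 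For $k_e = 0$ this gives $\GG = \Ss_{7\tau}$ or the once-truncated prism $\Ss_{7\tau}^{\dagger} := \Ss_{7\tau}^{\dagger v}$; for $k_e = 1$ it gives a gluing $\Ss_{7\tau}^{\dagger}\sharp_{\phi_j}\Ss_{7\tau}^{\dagger}$.

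The one genuinely combinatorial point---and the place where the count $4$ replaces the dimension-$8$ count $5$---is the enumeration of the admissible gluing isomorphisms $\phi_j$. Such a $\phi_j$ is an isomorphism of the links $(\Ss_{7\tau})_v$, whose Coxeter group is the cyclic diagram $\tilde{A}_6$, constrained to preserve the induced ridge labels; I would read off directly from the diagram of $W_{7\tau}$ that there are exactly four label-preserving gluings, recorded in Appendix \ref{appendix:678prism}. I expect this enumeration to be the only delicate step, as everything else is formal.

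Finally the four numbered assertions follow from Theorem \ref{MainTheorem} together with the parametrizations already established. The simplex $\Ss_{7\tau}$ has no prismatic circuit, so it is hyperbolizable by Theorem \ref{thm:geometrization}, and $b(\Ss_{7\tau}) = e_+ - 7 = 1$ gives $\B(\Ss_{7\tau}) \simeq \R$ by Theorem \ref{t:moduli_simplex}. Truncating the $\tilde{A}_6$ vertex, Corollary \ref{cor:moduli_truncation} removes exactly the locus $R_{\C_v} = 0$, yielding $\B(\Ss_{7\tau}^{\dagger}) \simeq \R^{\star}$, which is not hyperbolizable since its prismatic circuit is $\tilde{A}_6$. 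For the glued prisms, Proposition \ref{prop:evaluation} gives $\B(\GG) \cong X(\GG) \times \R^{k_e}$; here $k_e = 1$, the single surviving flexible invariant is the $\tilde{A}_6$ essential circuit contributing an $\R^{\star}$-factor, and the bending parameter contributes $\R$, so $\B(\Ss_{7\tau}^{\dagger}\sharp_{\phi_j}\Ss_{7\tau}^{\dagger}) \simeq \R^{\star} \times \R$, again not hyperbolizable by Theorem \ref{thm:geometrization}.
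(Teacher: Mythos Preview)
Your proposal is correct and follows essentially the same approach as the paper, which simply says ``The proof is similar to the one of Theorem~\ref{t:final8}'' and refers to Table~\ref{tab:excep_7prism} for the four gluings. Your expansion of the case analysis (tree-type block forces $\GG=\Ss_{7i}$; pan-type block has a unique $\tilde{A}_6$ vertex, so $k_e\le 1$) and your identification of the gluing enumeration as the only nontrivial combinatorial step match the paper's reasoning exactly.
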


\begin{proof}
The proof is similar to the one of Theorem \ref{t:final8}. See Table \ref{tab:excep_7prism} for the Coxeter groups of the four different prisms $\Ss_{7\tau}^{\dagger} \sharp_{\phi_j} \Ss_{7\tau}^{\dagger}$ $(j=1,\dots,4)$.
\end{proof}

\subsection{Dimension $6$}

There are three 2-Lannér Coxeter groups of rank $7$ (see Theorem \ref{t:maxwell} and Table \ref{Table_dim6}). Two of them, $W_{6i}$ $(i=1,2)$, are of tree type and one of them, $W_{6\tau}$, is of pan type. All are quasi-Lannér but not Lannér. The situation is again very similar to the one in dimension $8$.

\begin{theorem}\label{t:final6}
In dimension $6$, there exist eight convex-projectivizable, irreducible, large, 2-perfect labeled truncation polytopes: $\Ss_{6i}$ $(i=1,2)$ $\Ss_{6\tau}$, $\Ss_{6\tau}^{\dagger}$ and $\Ss_{6\tau}^{\dagger} \sharp_{\phi_j} \Ss_{6\tau}^{\dagger}$ $(j=1,\dotsc,4)$.\footnote{The definition of those polytopes is analogous to the one in Section \ref{subsec:dim8}.}
\begin{enumerate}
\item two simplices $\Ss_{6i}$ are hyperbolizable and rigid;
\item the simplex $\Ss_{6\tau}$ is hyperbolizable, and $\B(\Ss_{6\tau}) \simeq \R$;
\item the prism $\Ss_{6\tau}^{\dagger}$ is not hyperbolizable, and $\B(\Ss_{6\tau}^{\dagger}) \simeq \R^{\star}$;
\item four prisms $\Ss_{6\tau}^{\dagger} \sharp_{\phi_j} \Ss_{6\tau}^{\dagger}$ are not hyperbolizable, and $\B(\Ss_{6\tau}^{\dagger}\sharp_{\phi_j} \Ss_{6\tau}^{\dagger}) \simeq \R^{\star} \times \R$.
\end{enumerate}
\end{theorem}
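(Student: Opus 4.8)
The plan is to run the argument of Theorems \ref{t:final8} and \ref{t:final7} verbatim, now specialized to the three rank-$7$ $2$-Lann\'er Coxeter groups $W_{61}, W_{62}$ (tree type) and $W_{6\tau}$ (pan type). First I would take an arbitrary convex-projectivizable, irreducible, large, $2$-perfect labeled truncation $6$-polytope $\GG$ and form its decomposition $\{\Ss_j^{\dagger \mathcal{V}_j}\}_{j=1}^{k_e+1}$ along the essential prismatic circuits, as provided by the proof of Theorem \ref{thm:geometrization}. Since each block $\Ss_j$ is an irreducible, large, $2$-perfect labeled $6$-simplex, Theorem \ref{t:maxwell} forces $\Ss_j \in \{\Ss_{61},\Ss_{62},\Ss_{6\tau}\}$, which splits the analysis into the two cases: (i) some block is a tree-type simplex $\Ss_{6i}$, and (ii) every block equals $\Ss_{6\tau}$.

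In case (i) I would show that $\GG$ is itself the simplex $\Ss_{6i}$. Every vertex link of a tree-type quasi-Lann\'er simplex is either spherical or irreducible affine \emph{of tree type}, hence never Lann\'er nor $\tilde{A}_5$; so Lemma \ref{lemma:property:prismatic} rules out any prismatic circuit along which $\GG$ could split, whence $k_e = 0$ and $\GG = \Ss_{6i}$, exactly as in the proof of Theorem \ref{classi_dim9}. Rigidity and hyperbolizability of item (1) then follow from Theorem \ref{t:moduli_simplex} (tree type gives a single isomorphism class) and Remark \ref{remark:hyperbolic} (the Tits simplex $\Delta_{W_{6i}}$ is hyperbolic). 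In case (ii) the combinatorial key is that exactly one vertex $v$ of $\Ss_{6\tau}$ is $\tilde{A}_5$, namely the vertex opposite the pendant facet of the $6$-pan, whose link is the $6$-cycle $\tilde{A}_5 = \tilde{A}_{d-1}$; all remaining vertices are spherical or affine but not $\tilde{A}_5$. By Lemma \ref{lemma:property:prismatic} the essential prismatic circuits are therefore confined to these $\tilde{A}_5$ vertices, so $k_e \in \{0,1\}$. When $k_e = 0$ this gives $\GG = \Ss_{6\tau}$ or $\GG = \Ss_{6\tau}^{\dagger} := \Ss_{6\tau}^{\dagger v}$; when $k_e = 1$ the polytope is a gluing $\Ss_{6\tau}^{\dagger} \sharp_{\phi_j} \Ss_{6\tau}^{\dagger}$, and enumerating the admissible gluing isomorphisms $\phi_j$ of the $\tilde{A}_5$ facet yields exactly four combinatorial types (recorded with their Coxeter groups in Appendix \ref{appendix:678prism}, in parallel with the $d=7,8$ cases).

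Finally I would read off the deformation spaces and the hyperbolizability from Proposition \ref{prop:evaluation}, Corollary \ref{cor:moduli_truncation} and Theorem \ref{MainTheorem}. For $\Ss_{6\tau}$ the unique relevant circuit is the $6$-cycle $\C_v$, so Theorem \ref{t:moduli_simplex} gives $\B(\Ss_{6\tau}) \simeq \R$, hyperbolizable as a simplex; truncating the $\tilde{A}_5$ vertex deletes the locus $R_{\C_v} = 0$ by Corollary \ref{cor:moduli_truncation}, giving $\B(\Ss_{6\tau}^{\dagger}) \simeq \R^{\star}$. For each glued prism the evaluation map of Proposition \ref{prop:evaluation} has $k_A(\GG) = 1$ (the shared $\tilde{A}_5$ circuit) and $k_e(\GG) = 1$, with $k_L = k_v = k_c = k_K = 0$, so $\B \simeq X(\GG) \times \R^{k_e} = \R^{\star} \times \R$. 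Since a prismatic circuit equal to $\tilde{A}_5$ rather than Lann\'er obstructs hyperbolization by Theorem \ref{thm:geometrization}, both $\Ss_{6\tau}^{\dagger}$ and the four glued prisms are non-hyperbolizable, completing items (2)--(4). The main obstacle I anticipate is not the deformation-space bookkeeping, which is routine once Proposition \ref{prop:evaluation} is in hand, but the purely combinatorial verification that there are \emph{exactly} four gluings $\phi_j$: one must check which isomorphisms of the $\tilde{A}_5$ link extend to a genuine $2$-perfect labeled polytope with matching pendant facets, and that the resulting four polytopes are pairwise non-isomorphic. This is a finite but delicate case analysis of the symmetries of the $6$-cycle compatible with the two tails, and it is precisely what the table of exceptional $6$-prisms is designed to settle.
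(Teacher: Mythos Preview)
Your proposal is correct and follows essentially the same approach as the paper's proof, which simply says ``similar to the one of Theorem~\ref{t:final8}'' and refers to Table~\ref{tab:excep_6prism} for the enumeration of the four glued prisms. You have spelled out the case analysis and the deformation-space computations (via Proposition~\ref{prop:evaluation} and Corollary~\ref{cor:moduli_truncation}) in more detail than the paper does, but the underlying argument is identical; your closing remark that the enumeration of the four gluings $\phi_j$ is a finite combinatorial check relegated to the appendix is exactly how the paper handles it as well.
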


\begin{proof}
The proof is similar to the one of Theorem \ref{t:final8}. See Table \ref{tab:excep_6prism} for the Coxeter groups of the four different prisms $\Ss_{6\tau}^{\dagger} \sharp_{\phi_j} \Ss_{6\tau}^{\dagger}$ $(j=1,\dots,4)$.
\end{proof}

\section{Dimension $5$}\label{section:Dim5}

The situation in dimension $5$ is richer than that in higher dimensions. There are twenty three 2-Lannér Coxeter groups of rank $6$ (see Theorem \ref{t:maxwell} and Table \ref{Table_dim5}). Eighteen of them, $W_{5i}$ $(i=1,\dotsc, 9)$ and $W_{5ti}$ $(i=1, \dotsc, 9)$, are of tree type, two of them, $W_{5ci}$ $(i=1,2)$, are of cycle type, and three of them, $W_{5\tau}$ and $W_{5pi}$ $(i=1,2)$, are of pan type. We set $\Ss_{*} := \Ss_{W_{*}}$ for $* \in \{ 5i, 5ti, 5ci, 5\tau, 5pi \}$. Unlike higher dimensions, there exist irreducible, large, 2-perfect labeled simplices in dimension $5$ that have at least two Lann\'{e}r vertices. This makes it possible to build infinitely many truncation 5-polytopes.

\begin{theorem}\label{t:final5}
Let $\GG$ be a convex-projectivizable, irreducible, large, 2-perfect labeled truncation $5$-polytope. Then $\B(\GG)$ is homeomorphic to $\R^*$, $\R^*\times \R$ or $\R^{m}$ for $m \in \mathbb{N} \cup \{ 0 \}$. More precisely, the following hold:
\begin{enumerate}
\item $\GG = \Ss_{5\tau}^\dagger$ if and only if $\B(\GG) \simeq \R^*$;

\item $\GG = \Ss_{5\tau}^{\dagger}\sharp_{\phi_j} \Ss_{5\tau}^{\dagger}$ $(j=1,2,3)$ if and only if $\B(\GG) \simeq \R^* \times \R$; 

\item otherwise, $\GG$ is hyperbolizable and $\B(\GG) \simeq \R^{b(\GG)}$.
\end{enumerate}
In addition, for any $m \in \mathbb{N} \cup \{ 0 \} $, there exists an irreducible, large, \emph{perfect} labeled truncation 5-polytope $\GG$ such that $\B(\GG) \simeq \mathbb{R}^m$.   
\end{theorem}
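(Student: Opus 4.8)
The plan is to realize $\R^m$ as the deformation space of a suitable \emph{perfect} and \emph{hyperbolizable} polytope, for which Theorem~\ref{MainTheorem} forces $\B(\GG)$ to be a single open cell of dimension $b(\GG)=e_+(\GG)-5$: a hyperbolizable $\GG$ has $\B(\GG)$ connected, hence homeomorphic to $\R^{b(\GG)}$. Thus it suffices to build, for every $m$, an irreducible, large, perfect labeled truncation $5$-polytope $\GG$ that is hyperbolizable with $b(\GG)=m$. The building block will be an irreducible, large, $2$-perfect labeled $5$-simplex $\Ss$ of \emph{tree type} (so $\B(\Ss)$ is a point) which has two Lannér vertices $v_1,v_2$ with isomorphic links and whose remaining four vertices are spherical. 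The existence of such a simplex is read off from the classification of rank-$6$ $2$-Lannér Coxeter groups (Table~\ref{Table_dim5} and Appendix~\ref{classi_lorent}); since its Coxeter diagram is a tree on $6$ nodes, $e_+(\Ss)=5$.

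For $m=0$ I would take $\GG=\Ss^{\dagger\{v_1,v_2\}}$, obtained by truncating both Lannér vertices. For $m\geq 1$, I glue $m+1$ copies $B_1,\dots,B_{m+1}$ of this once-truncated simplex into a chain, identifying the truncation facet at $v_2$ of $B_i$ with the truncation facet at $v_1$ of $B_{i+1}$ through the isomorphism of their links, so that $\GG=\GG_m=B_1\sharp\cdots\sharp B_{m+1}$. Each interface is a rigid, tree-type Lannér prismatic circuit, and all $m$ interior circuits are essential (on splitting, neither side is a simplex). By (the proof of) Proposition~\ref{prop:evaluation}, the invariants $k_L,k_A,k_v,k_c,k_K$ of $\GG_m$ all vanish---there are no flexible Lannér nor $\tilde{A}_4$ circuits, no flexible vertices, and no cycle- or $K_{2,3}$-blocks---while the number $k_e$ of essential circuits equals $m$; hence $\dim\B(\GG_m)=m$. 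Equivalently, since truncation creates only ridges labelled $\nicefrac{\pi}{2}$ and each rigid gluing obeys $e_+(\GG_1\sharp\GG_2)=e_+(\GG_1)+e_+(\GG_2)-4$, one finds $e_+(\GG_m)=5(m+1)-4m=m+5$, so $b(\GG_m)=m$.

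Next I would verify the required properties. The polytope $\GG_m$ is a truncation $5$-polytope, being a gluing of truncated simplices. It is \emph{perfect}: the vertices off the seams are spherical because every vertex of $\Ss$ is spherical or Lannér and the Lannér ones are truncated; a seam vertex $w$ lies on a now-interior truncation facet, and its facets in $\GG_m$ are the amalgamated old facets, so its link is the spherical link of the corresponding vertex of the Lannér simplex $\Ss_{W_\delta}$. It is \emph{hyperbolizable}: each prismatic circuit is the link of a truncated vertex of a tree-type block, hence carries no $\tilde{A}$ subdiagram, so every $W_\delta$ is tree-type Lannér and Theorem~\ref{thm:geometrization} applies. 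It is \emph{irreducible}: a truncation facet meets the five facets through its vertex at $\nicefrac{\pi}{2}$ but is joined to the single remaining opposite facet by an edge of label $\infty$, so no isolated node appears, and the connected block subdiagrams are amalgamated along the nonempty circuits $\delta$. Finally it is \emph{large}, since each Lannér $W_\delta$ is a large proper standard subgroup of $W_{\GG_m}$. Theorem~\ref{MainTheorem} then gives $\B(\GG_m)\simeq\R^m$.

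The main obstacle is the very first step: producing the block $\Ss$ with exactly the right profile---tree type, two Lannér vertices whose deleted-node diagrams are isomorphic, and \emph{no} affine vertex, since an affine vertex is neither spherical nor (unless it is $\tilde{A}_4$) truncatable and would destroy perfectness, while an $\tilde{A}_4$ vertex would make $\GG$ non-hyperbolizable and $\B(\GG)$ disconnected. This is a finite inspection of the rank-$6$ $2$-Lannér list; one must also realize the link isomorphism used in the gluing, for instance by choosing $\Ss$ with a diagram symmetry exchanging $v_1$ and $v_2$, or by alternating two blocks whose matching links agree. Granting a suitable block, the remaining bookkeeping---additivity of $e_+$ and the vanishing of the $k_*$---is routine via Propositions~\ref{prop:deformation_block} and~\ref{prop:evaluation} and Lemma~\ref{lem:gluing}.
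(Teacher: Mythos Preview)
You address only the final existence clause and say nothing about the trichotomy in items (1)--(3). That classification does not drop out of Theorem~\ref{MainTheorem} by itself: you must argue that the presence of an $\tilde{A}_4$ prismatic circuit forces $\GG$ to be one of the four listed prisms. The paper's argument is that, by Theorems~\ref{MainTheorem} and~\ref{thm:geometrization}, $\B(\GG)$ is disconnected exactly when some block of $\GG$ has an $\tilde{A}_4$ vertex; inspecting Table~\ref{Table_dim5} one sees that $W_{5\tau}$ is the \emph{unique} rank-$6$ $2$-Lann\'er group with an $\tilde{A}_4$ node, and every other node of $W_{5\tau}$ is spherical. Hence any such block equals $\Ss_{5\tau}$, can be glued only at its single $\tilde{A}_4$ vertex, and only to another copy of $\Ss_{5\tau}$, yielding precisely $\Ss_{5\tau}^\dagger$ and the three gluings $\Ss_{5\tau}^\dagger\sharp_{\phi_j}\Ss_{5\tau}^\dagger$. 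Without this step the ``only if'' directions of (1) and (2), and the hyperbolizability assertion in (3), remain unproved.

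For the existence clause your construction is essentially the paper's: truncate all Lann\'er vertices of a tree-type $2$-Lann\'er $5$-simplex with no affine vertex and glue $m+1$ copies in a chain, so that $k_L=k_A=k_v=k_c=k_K=0$ and $k_e=m$. The paper simply names $\Ss_{5t1},\Ss_{5t3},\Ss_{5t7},\Ss_{5t9}$, which are exactly the rigid $2$-Lann\'er diagrams in Table~\ref{Table_dim5} with only white and blue nodes; your concern about matching links at the gluing is resolved by choosing $\Ss_{5t7}$ (or $\Ss_{5t3}$), whose diagram carries an involution swapping its two blue nodes. Your dimension count via $e_+(\GG_m)=5(m+1)-4m=m+5$ and the perfectness verification are correct.
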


\begin{proof}
By Theorems \ref{MainTheorem} and \ref{thm:geometrization}, the space $\B(\GG)$ is disconnected if and only if there exists an $\tilde{A}_{4}$ prismatic circuit of $\GG$. This is equivalent to require that one of $\Ss_j$ equals $\Ss_{5\tau}$, i.e. $\GG = \Ss_{5\tau}^\dagger$ or $\GG = \Ss_{5\tau}^{\dagger}\sharp_{\phi_j} \Ss_{5\tau}^{\dagger}$ $(j=1,2,3)$ (see Table \ref{tab:excep_prism}). All three items of Theorem \ref{t:final5} then follow again from Theorem \ref{MainTheorem}.

\medskip

The labeled simplices $\Ss_{5t1}$, $\Ss_{5t3}$, $\Ss_{5t7}$ and $\Ss_{5t9}$ have only spherical or Lann\'{e}r vertices, and each of them has two or three Lann\'{e}r vertices. For each $* \in \{ 5t1, 5t3, 5t7, 5t9 \}$, the once-truncated simplex $\Ss_{*}^{\dagger}$ obtained from $\Ss_*$ by truncating all its Lann\'{e}r vertices is perfect. So, if a polytope $\GG$ is obtained by gluing $m+1$ copies of $\Ss_{*}^{\dagger}$, then $\GG$ is also perfect. Since $m$ is the number of essential prismatic circuits of $\GG$, the space $\B(\GG)$ is homeomorphic to $\mathbb{R}^m$, by (the proof of) Theorem \ref{MainTheorem}.
\end{proof}

\section{Geometric interpretation}\label{section:geometric_interpretation}

Let $P$ be an irreducible, loxodromic Coxeter polytope, $\Gamma_P$ the group generated by the reflections in the facets of $P$, and $\Omega_P$ the interior of the union of $\Gamma_P$-translates of $P$. Then $\Omega_P$ is a properly convex domain, hence it admits a Hilbert metric $d_{\Omega_P}$. The polytope $P$ is said to be of \emph{finite volume} if $P \cap \Omega_P$ has finite volume with respect to the Hausdorff measure $\mu_{\Omega_P}$ induced by $d_{\Omega_P}$, \emph{convex-cocompact} if $P \cap C(\LGP) \subset \O_P$, or \emph{geometrically finite} if $\mu_{\O_P}(P \cap C(\LGP)) < \infty$, where $\Lambda_P$ is the limit set of $\G_P$ and $C(\LGP)$ is the convex hull of $\Lambda_P$ in $\O_P$. Those notions are studied in details in \cite{Marquis:2014aa}.

\medskip

The action of $\Gamma_P$ on $\Omega_P$ is cocompact if and only if $P$ is perfect. In this case, the convex domain $\Omega_P$ is strictly convex if and only if $\Gamma_P$ is word-hyperbolic, by work of Benoist \cite[Prop.~2.5]{benoist_qi}. The following theorems state analogous results for $2$-perfect Coxeter polytopes.

\begin{theorem}[{\cite[Th.~A]{Marquis:2014aa}}]\label{theo_action1}
Let $P$ be an irreducible, loxodromic, 2-perfect Coxeter polytope. Then:
\begin{itemize}
\item $P$ is geometrically finite;
\item $P$ is convex cocompact if and only if the link $P_v$ of each vertex $v$ of $P$ is elliptic or loxodromic;
\item $P$ is of finite volume if and only if $P$ is quasi-perfect.
\end{itemize}
\end{theorem}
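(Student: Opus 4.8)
The plan is to reduce all three assertions to a local analysis at the vertices of $P$, organized by the type of the vertex link. Since $P$ is $2$-perfect, each link $P_v$ is a perfect Coxeter $(d-1)$-polytope, so by the last assertion of Theorem \ref{tri} it is \emph{elliptic}, \emph{parabolic}, or (irreducible and) \emph{loxodromic}. By Theorem \ref{theo_vinberg}.(5) this trichotomy matches the position of $v$: the link is elliptic exactly when $v \in \Omega_P$; it is parabolic exactly when $v \in \partial\Omega_P$ is a cusp; and it is loxodromic exactly when $v \notin \overline{\Omega_P}$ is hyperideal, in which case $v$ is truncatable by Theorem \ref{thm:truncation}. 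First I would fix a $\Gamma_P$-equivariant decomposition $P \cap \Omega_P = K \sqcup \bigsqcup_v N_v$, where $K$ is compact and contained in $\Omega_P$ (avoiding every non-elliptic vertex) and each $N_v$ is an end-neighborhood of a parabolic or loxodromic vertex $v$. As $K$ is compact in $\Omega_P$, it has finite $\mu_{\Omega_P}$-measure and $K \cap C(\Lambda_P) \subset \Omega_P$ automatically; thus all three statements reduce to estimating the measure and locating $N_v$ for the two remaining vertex types.

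For a parabolic (cusp) vertex $v$, I would use Theorem \ref{tri}: the link domain $\Omega_{P_v}$ is an affine chart $\mathbb{A}^{d-1}$ and the stabilizer $\Stab_v(\Gamma_P)$ is virtually abelian, acting cocompactly on the horosphere-type hypersurfaces foliating $N_v$. Two facts must be extracted. First, $v \in \Lambda_P$ since it is the fixed point of a parabolic subgroup, so $N_v \cap C(\Lambda_P)$ accumulates at $v \in \partial\Omega_P$ and the inclusion $P \cap C(\Lambda_P) \subset \Omega_P$ fails at $v$. Second, the Hilbert volume of $N_v$ is finite: near $v$ the domain is asymptotic to a paraboloid over $\mathbb{A}^{d-1}$ (a generalized cusp of type $0$), and integrating the Hausdorff density against the cocompact action gives a convergent integral, exactly as for a hyperbolic cusp. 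Finiteness of $N_v \cap C(\Lambda_P)$ is then inherited as a subset.

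For a loxodromic (funnel) vertex $v$, I would truncate along the hyperplane $\Pi_v$ spanned by the poles $\{[b_s]\}_{s \in S_v}$ and split $N_v$ along $\Pi_v$ into a relatively compact inner part (the slice $P \cap \Pi_v \cong P_v$ together with a collar inside $\Omega_P$) and the \emph{funnel} $F_v$ lying between $\Pi_v$ and $v$. The key geometric input is that the boundary of $\Omega_P$ facing $v$ — the exit points of the edges through $v$ — lies in the domain of discontinuity rather than in $\Lambda_P$: the local limit set near $v$ is the orbit of the lower-dimensional limit set of $W_{S_v}$ on $\Omega_{P_v}$, and the component of $\partial C(\Lambda_P)$ facing $v$ is supported on $\Pi_v \cap \Omega_P$. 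Consequently $N_v \cap C(\Lambda_P)$ stays on the inner side of $\Pi_v$, is relatively compact and contained in $\Omega_P$ (so convex-cocompactness holds at $v$, with finite measure), whereas $F_v \cap \Omega_P$ flares toward $\partial\Omega_P$ and has \emph{infinite} Hilbert volume, the analogue of a hyperbolic funnel. Assembling the pieces then yields: $\mu_{\Omega_P}(P \cap C(\Lambda_P))$ is finite in every case, giving geometric finiteness; the inclusion $P \cap C(\Lambda_P) \subset \Omega_P$ fails precisely when a parabolic vertex occurs and holds otherwise, so convex-cocompactness is equivalent to every link being elliptic or loxodromic; and $\mu_{\Omega_P}(P \cap \Omega_P)$ is finite precisely when no funnel occurs, so finite volume is equivalent to quasi-perfectness.

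The hard part will be the two volume estimates — finiteness of the cusp volume and infinitude of the funnel volume — together with the precise identification of $\Lambda_P$ near a loxodromic vertex. In the hyperbolic model these are classical, but for a general properly convex $\Omega_P$ one must control the asymptotics of the Hilbert metric and its induced Hausdorff measure near a boundary point with parabolic (virtually abelian) or loxodromic stabilizer. I expect this to require a normal form for $\Omega_P$ near $v$ coming from Vinberg's theory (furnishing the cone-over-$\Omega_{P_v}$ model in the funnel case and the type-$0$ paraboloid model in the cusp case) and a comparison of $d_{\Omega_P}$ with an explicit model metric, so that the density integrates finitely over the cusp and infinitely over the funnel. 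The same local model is what pins down which boundary points belong to $\Lambda_P$, and hence underlies both the convex-cocompactness dichotomy and the cutting-off of the funnel needed for geometric finiteness.
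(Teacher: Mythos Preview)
This theorem is not proved in the paper: it is stated as a citation of \cite[Th.~A]{Marquis:2014aa} and used as a black box in Section~\ref{section:geometric_interpretation}. There is therefore no proof in the paper to compare your proposal against.

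That said, your outline is broadly consonant with the strategy of the cited reference: one classifies vertices by the type of the link (elliptic, parabolic, loxodromic), builds the thick--thin decomposition $P\cap\Omega_P = K \sqcup \bigsqcup_v N_v$, and then reduces all three assertions to local volume and limit-set computations in the cusp and funnel models. The genuinely delicate points you flag at the end --- the asymptotics of the Hilbert density near a parabolic fixed point, and the identification of $\Lambda_P$ near a loxodromic vertex as lying on the truncating hyperplane $\Pi_v$ --- are indeed where the work in \cite{Marquis:2014aa} is concentrated; they are not consequences of anything proved in the present paper, and your sketch correctly identifies them as the hard part rather than treating them as routine.
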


\begin{theorem}[{\cite[Th.~E]{Marquis:2014aa}}]\label{theo_action2}
Let $P$ be an irreducible, loxodromic, quasi-perfect Coxeter polytope, and $\mathcal{V}$ the set of all parabolic vertices of $P$. Then the convex domain $\O_P$ is strictly convex if and only if the group $\G_P$ is relatively hyperbolic with respect to the collection $\left\{ \G_v \right\}_{v \in \mathcal{V}}$ of the subgroups $\G_v$ generated by the reflections in the facets of $P$ that contain $v$. 
\end{theorem}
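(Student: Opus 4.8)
The plan is to read both conditions through the dynamics of $\G_P$ on the boundary $\partial \O_P$ and to extract the equivalence from a dynamical characterization of relative hyperbolicity. First I would record the standing reductions. Since $P$ is irreducible and loxodromic, $\O_P$ is properly convex and indecomposable, so the Hilbert metric $d_{\O_P}$ is at hand; since $P$ is quasi-perfect, Theorem~\ref{theo_action1} gives that $P$ is of finite volume, hence the $\G_P$-action on $\O_P$ is geometrically finite. By Vinberg's theory, each parabolic vertex $v \in \mathcal{V}$ lies on $\partial \O_P$, its stabilizer satisfies $\Stab_{\G_P}(v)=\G_v$, and $\G_v$ is virtually free abelian because the link $P_v$ is an affine Coxeter polytope. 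I would also use the standard synonym that $\O_P$ is strictly convex if and only if $\partial \O_P$ contains no nontrivial projective segment.

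The crux is a dichotomy locating where segments can live, and this is exactly where quasi-perfectness is decisive. Because $P$ has \emph{no} loxodromic vertex, there are no interior flat structures; geometric finiteness yields a thick--thin decomposition of $\Quotient{\O_P}{\G_P}$ whose thin parts are precisely cusp neighborhoods of the points of $\mathcal{V}$, and on the cocompact thick part the Hilbert geometry is uniformly hyperbolic in the sense of the Benoist-type divergence estimates, so no boundary segment can be supported there. Consequently $\partial \O_P$ contains a nontrivial segment if and only if some conjugate of some $\G_v$ preserves one, i.e. if and only if some cusp fails to be of type $0$. In other words, I would reduce strict convexity of $\O_P$ to the assertion that the limit set of each peripheral group $\G_v$ in $\partial \O_P$ is the single point $v$.

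It then remains to match this single-point-limit-set condition with relative hyperbolicity, which I would do through Yaman's theorem. If $\O_P$ is strictly convex then, combined with geometric finiteness, the single-point-limit-set property upgrades the proper action to a geometrically finite \emph{convergence} action on the compactum $\partial \O_P$ whose bounded parabolic points form the $\G_P$-orbit of $\mathcal{V}$ and whose remaining limit points are conical; Yaman's criterion then identifies $\G_P$ as relatively hyperbolic with respect to the maximal parabolic subgroups, which are precisely $\{\G_v\}_{v\in\mathcal{V}}$. Conversely, if $\G_P$ is relatively hyperbolic with respect to $\{\G_v\}$, I would use the uniqueness of the Bowditch boundary to recognize the geometrically finite action on $\partial \O_P$ as the convergence action realizing this structure, whence each peripheral subgroup $\G_v$ has single-point limit set; by the dichotomy of the previous paragraph, $\partial \O_P$ then carries no segment and $\O_P$ is strictly convex.

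The main obstacle, I expect, is the converse step: passing from \emph{abstract} relative hyperbolicity of $\G_P$ with respect to $\{\G_v\}$ to the conclusion that the geometrically finite action on $\partial \O_P$ is genuinely a convergence action with single-point parabolics, i.e. that the algebra forces every cusp to be of type $0$ rather than of higher type in the Cooper--Long--Tillmann classification. This is delicate precisely because a virtually $\Z^{d-1}$ peripheral subgroup can a priori be the stabilizer of either a segment-free or a segment-bearing cusp, so one must genuinely exploit the $\G_P$-equivariant identification of $\partial \O_P$ with the Bowditch boundary to exclude the latter. Establishing the uniform hyperbolicity of the thick part and the $C^1$-regularity of $\partial \O_P$ needed to invoke Yaman's theorem is the remaining technical work; by comparison, the identifications $\Stab_{\G_P}(v)=\G_v$ and the virtual abelianity of $\G_v$ are routine consequences of Vinberg's description of vertex links.
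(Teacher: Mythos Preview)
The paper does not prove this statement; it is quoted verbatim from \cite[Th.~E]{Marquis:2014aa} and used as a black box. So there is no ``paper's own proof'' to compare against, and I evaluate your sketch on its own merits.

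There is a genuine gap, and it lies in your second paragraph. The claimed dichotomy ``$\partial\O_P$ contains a nontrivial segment if and only if some conjugate of some $\G_v$ preserves one'' is false, and the sentence ``Because $P$ has no loxodromic vertex, there are no interior flat structures'' is exactly where the argument breaks. Quasi-perfectness constrains only the \emph{vertex} links; it says nothing about $\tilde{A}_{d-1}$ standard subgroups coming from prismatic circuits, which are not vertex groups. Such a circuit produces a properly embedded $(d-1)$-simplex in $\O_P$ and hence nontrivial segments in $\partial\O_P$ supported entirely in the thick part, away from every cusp. The present paper is built around precisely this phenomenon: the prisms $\Ss_{d\tau}^{\dagger}$ and $\Ss_{d\tau}^{\dagger}\sharp_{\phi_j}\Ss_{d\tau}^{\dagger}$ of Section~\ref{section:higherDim} and Table~\ref{fig:example_dim4} are quasi-perfect with all cusps of type $0$ (so every peripheral $\G_v$ has single-point limit set), yet $\O_P$ is not strictly convex, by Lemma~\ref{lem:strictlyconvex} and the proof of Theorem~\ref{thm:existence_quasi-divisible}. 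Thus your reduction ``strict convexity $\Leftrightarrow$ each peripheral has single-point limit set'' fails in both directions, and the asserted uniform hyperbolicity of the thick part is not a consequence of quasi-perfectness but is essentially equivalent to what you are trying to prove.

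Your converse step is also circular as written. Identifying $\partial\O_P$ with the Bowditch boundary presupposes that the $\G_P$-action on $\partial\O_P$ is a convergence action; in Hilbert geometry this is tied to strict convexity (and $C^1$ regularity) of $\partial\O_P$, which is the conclusion you want. The actual argument in \cite{Marquis:2014aa} must confront the internal $\tilde{A}$-flats directly: one shows that a segment in $\partial\O_P$ forces a virtually abelian subgroup of rank $\geqslant 2$ not conjugate into any $\G_v$, contradicting relative hyperbolicity, and conversely that relative hyperbolicity with the given peripherals forces the cusped Hilbert space to be Gromov-hyperbolic. Your outline does not contain this mechanism.
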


As in the proof of {\cite[Th.~F]{Marquis:2014aa}}, one can prove the following lemma:

\begin{lemma}\label{lem:strictlyconvex}
Let $P$ be an irreducible, loxodromic, quasi-perfect Coxeter truncation $d$-polytope, and $\mathcal{V}$ the set of all parabolic vertices of $P$. If $P$ has an $\tilde{A}_{d-1}$ prismatic circuit, then $\Gamma_P$ is not relatively hyperbolic with respect to $\left\{ \G_v \right\}_{v \in \mathcal{V}}$.
\end{lemma}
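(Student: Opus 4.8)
The plan is to reduce the statement to a convexity property and then exhibit a flat piece on the boundary of $\Omega_P$. Since $P$ is irreducible, loxodromic and quasi-perfect, Theorem~\ref{theo_action2} tells us that $\Gamma_P$ is relatively hyperbolic with respect to $\{\Gamma_v\}_{v\in\mathcal{V}}$ if and only if $\Omega_P$ is strictly convex. Recall also that $\Omega_P$ is properly convex here. Hence it suffices to prove that $\Omega_P$ is \emph{not} strictly convex, i.e. that $\partial\Omega_P$ contains a non-trivial segment. My strategy is to locate such a segment on the cross-section of $\Omega_P$ cut out by the hyperplane attached to the given $\tilde{A}_{d-1}$ prismatic circuit.

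First I would set up the cross-section. Writing $(\sigma_s = \mathrm{Id}-\alpha_s\otimes b_s)_{s\in S}$ for the reflections of $P$ and letting $\delta$ be the $\tilde{A}_{d-1}$ prismatic circuit, I would consider the subspace $\Pi_\delta$ spanned by $(b_s)_{s\in\delta}$ together with the group $\Gamma_\delta$ generated by the induced reflections $\sigma_s^\delta$. Exactly as in the proof of Lemma~\ref{lemma:property:prismatic}, the intersection $P_\delta=\bigcap_{s\in\delta}\S(\{x\in\Pi_\delta\mid\alpha_s(x)\leqslant 0\})$ is a \emph{perfect} Coxeter $(d-1)$-simplex whose $\Gamma_\delta$-orbit has interior the properly convex domain $\Omega_P\cap\S(\Pi_\delta)$. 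Since $W_\delta=\tilde{A}_{d-1}$ and $P_\delta$ is perfect, Theorem~\ref{tri}.(2) leaves only two possibilities for this domain: an affine chart of $\S(\Pi_\delta)$ (the parabolic case) or a $(d-1)$-simplex (the loxodromic case). An affine chart is not properly convex, so the properly convex domain $\Omega_P\cap\S(\Pi_\delta)$ must be a $(d-1)$-simplex $\Delta^{d-1}$.

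Finally I would harvest the segment. As $d\geqslant 4$, the simplex $\Delta^{d-1}$ has dimension $\geqslant 3$, so its boundary contains non-trivial segments, namely its edges. Because $\S(\Pi_\delta)$ meets the interior of $\Omega_P$, the relative boundary of the cross-section coincides with $\partial\Omega_P\cap\S(\Pi_\delta)$, so any such edge is a non-trivial segment of $\partial\Omega_P$; thus $\Omega_P$ is not strictly convex, and Theorem~\ref{theo_action2} yields the conclusion. The main point to get right is the identification of the cross-section $\Omega_P\cap\S(\Pi_\delta)$ as a genuine \emph{simplex} rather than a strictly convex body: this is precisely where the hypothesis that the circuit is of affine type $\tilde{A}_{d-1}$ enters, through Theorem~\ref{tri}.(2), and where the appeal to proper convexity (supplied by the proof of Lemma~\ref{lemma:property:prismatic}) is needed to discard the degenerate affine-chart alternative. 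The remaining steps — that a simplex of dimension $\geqslant 1$ is not strictly convex, and that boundary segments of a hyperplane cross-section persist in $\partial\Omega_P$ — are routine.
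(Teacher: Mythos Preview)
Your argument is correct and is exactly the line the paper has in mind when it refers to the proof of \cite[Th.~F]{Marquis:2014aa}: reduce via Theorem~\ref{theo_action2} to showing that $\Omega_P$ fails to be strictly convex, identify the cross-section $\Omega_P\cap\S(\Pi_\delta)$ with the $\Gamma_\delta$-tiled domain of the perfect $(d-1)$-simplex $P_\delta$ (as in the proof of Lemma~\ref{lemma:property:prismatic}, via \cite[Lem.~8.19]{Marquis:2014aa}), and then use Theorem~\ref{tri}.(2) together with proper convexity of $\Omega_P$ to force this cross-section to be a simplex $\Delta^{d-1}$ rather than an affine chart. The edges of $\Delta^{d-1}$ then sit in $\partial\Omega_P$, and you are done. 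One cosmetic point: the lemma does not literally assume $d\geqslant 4$; your segment-on-the-boundary step only needs $d-1\geqslant 2$, i.e.\ $d\geqslant 3$, which covers all cases where the statement is used (cf.\ Remark~\ref{rem:other_quasi-divisible}).
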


\begin{rem}
The converse of Lemma \ref{lem:strictlyconvex} also holds: if the Coxeter group $W_P$ is large and all the prismatic circuits of $P$ are Lann\'{e}r, then $\Gamma_P$ is relatively hyperbolic with respect to $\left\{ \G_v \right\}_{v \in \mathcal{V}}$ (see Theorem \ref{thm:geometrization}). 
\end{rem}

The space of finite volume (resp. convex-cocompact, resp. geometrically finite) Coxeter polytopes realizing $\GG$ is denoted $\B^{vf}(\GG)$ (resp. $\B^{cc}(\GG)$, resp.  $\B^{gf}(\GG)$).

\begin{prop}
Let $\GG$ be an irreducible, large, $2$-perfect labeled truncation polytope of dimension $d \geqslant 4$, and $\mathcal{V}_{\tilde{A}}$ the set of $\tilde{A}_{d-1}$ vertices of $\GG$. Then:
\begin{itemize}
\item $\B(\GG) = \B^{gf}(\GG)$; 
\item $\B^{cc}(\GG)$ is an open subset of $\B(\GG)$. Moreover, $\B^{cc}(\GG) = \B(\GG^{\dagger \mathcal{V}_{\tilde{A}}})$;
\item $\B^{vf}(\GG)$ is a submanifold of $\B(\GG)$.
\end{itemize}
\end{prop}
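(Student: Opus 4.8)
The plan is to reduce all three claims to the link trichotomy and to the geometric characterizations recorded in Theorem \ref{theo_action1}. First observe that every $[P]\in\B(\GG)$ is irreducible and loxodromic: since $\GG$ is irreducible and large, Theorem \ref{tri}.(5) applies, and the join alternative $P=Q\otimes\cdot\,$ is excluded because $W_\GG$ is irreducible. Moreover $P$ is $2$-perfect by Remark \ref{rem:independance}. Thus every $[P]\in\B(\GG)$ meets the hypotheses of Theorem \ref{theo_action1}, whose first item says $P$ is geometrically finite. Hence $\B(\GG)\subseteq\B^{gf}(\GG)$, and the reverse inclusion is trivial, giving $\B(\GG)=\B^{gf}(\GG)$. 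This settles the first bullet.

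The key preparation for the other two bullets is a classification of the vertex links by type as functions of $[P]$. Because $\GG$ is a truncation polytope it is simple, so each link $\GG_v$ is a $(d-1)$-simplex and each $P_v$ is a \emph{perfect} Coxeter $(d-1)$-simplex; by the ``Moreover'' clause of Theorem \ref{tri} every such $P_v$ is elliptic, parabolic or irreducible loxodromic. Combining Corollary \ref{cor:truncatable}, Remark \ref{remark:loxo_imply} and Theorem \ref{t:moduli_simplex}, one sorts the vertices of $\GG$: a spherical vertex has elliptic link; a Lann\'er vertex has loxodromic link; an $\tilde{A}_{d-1}$ vertex $v$ has loxodromic link when $R_{\C_v}(A_P)\neq 0$ and parabolic link when $R_{\C_v}(A_P)=0$; a remaining (tree-type) affine vertex has parabolic link for every $[P]$. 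In particular the only link types that vary over $\B(\GG)$ are those at $\tilde{A}_{d-1}$ vertices, and they are governed by the continuous normalized cyclic products $R_{\C_v}$.

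For the convex-cocompact locus, Theorem \ref{theo_action1} gives that $P$ is convex-cocompact iff no link $P_v$ is parabolic; by the analysis above this is exactly the condition $R_{\C_v}(A_P)\neq 0$ for every $v\in\mathcal{V}_{\tilde{A}}$ (after checking that no tree-type affine vertex survives as a non-truncated vertex of $\GG$, which would force that locus to be empty). The set $\{[P]\in\B(\GG)\mid R_{\C_v}(A_P)\neq 0\ \text{for all}\ v\in\mathcal{V}_{\tilde{A}}\}$ is open, being the complement of finitely many continuous zero-conditions, and it is precisely $\B(\GG)^{\dagger\mathcal{V}_{\tilde{A}}}$ in the notation of Corollary \ref{cor:moduli_truncation}, which identifies it homeomorphically with $\B(\GG^{\dagger\mathcal{V}_{\tilde{A}}})$; this gives the second bullet. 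For the finite-volume locus, Theorem \ref{theo_action1} gives that $P$ has finite volume iff $P$ is quasi-perfect, i.e. iff no link is loxodromic. By the link analysis this fails precisely at Lann\'er vertices and at $\tilde{A}_{d-1}$ vertices with $R_{\C_v}(A_P)\neq 0$. Hence $\B^{vf}(\GG)=\varnothing$ if $\GG$ has a Lann\'er vertex, and otherwise $\B^{vf}(\GG)=\{[P]\mid R_{\C_v}(A_P)=0\ \text{for all}\ v\in\mathcal{V}_{\tilde{A}}\}$. In either case $\B^{vf}(\GG)$ is the common zero set of finitely many coordinate functions of the evaluation map $\Theta$ of Proposition \ref{prop:evaluation}; since the $\tilde{A}_{d-1}$ vertices lie among the flexible vertices $\mathcal{V}_f$, these are (free) $\mathcal{V}_f$-coordinates, so the zero set is the preimage under the fibration $\Theta$ of a coordinate subspace of $X(\GG)$, hence a submanifold of $\B(\GG)$.

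The main work, and the step I expect to be most delicate, is the vertex-link analysis of the second paragraph: one must faithfully translate the intrinsic type of each $P_v$ into a condition on $[P]$, namely the (non-)vanishing of $R_{\C_v}$ at $\tilde{A}_{d-1}$ vertices via Corollary \ref{cor:truncatable}, and rigidity (Theorem \ref{t:moduli_simplex}, Theorem \ref{tri}) at Lann\'er and tree-type affine vertices. The subtle point is to verify that the \emph{variable} part of the parabolicity condition is captured exactly by the $\tilde{A}_{d-1}$ vertices and by free coordinates of $\Theta$, so that openness in the second bullet and the coordinate-subspace structure in the third follow formally from Corollary \ref{cor:moduli_truncation} and Proposition \ref{prop:evaluation}; the rigidly parabolic (tree-type affine) vertices must be tracked separately, since they never vary with the deformation and therefore determine whether $\B^{cc}(\GG)$ is empty.
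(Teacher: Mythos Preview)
Your proof follows the same route as the paper's: apply Theorem \ref{theo_action1} to each bullet, translate the link trichotomy into (non-)vanishing of the normalized cyclic products $R_{\C_v}$ at the $\tilde{A}_{d-1}$ vertices, then invoke Corollary \ref{cor:moduli_truncation} for the second bullet and the linearity of the conditions $R_{\C_v}=0$ in the evaluation coordinates of Section \ref{subsec:evaluation} for the third. Your caveat about tree-type affine vertices and your remark on ``free'' $\mathcal{V}_f$-coordinates are points the paper's proof does not raise; it simply records that each $\Sigma_v$ is a hyperplane in those coordinates and concludes.
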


\begin{proof}
The first item is a consequence of the first item of Theorem \ref{theo_action1}. 

\medskip

For each $v \in \mathcal{V}_{\tilde{A}}$, the subspace
$$
\Sigma_v := \{ [P] \in \B(\GG) \mid R_{\mathcal{C}_v}([P])=0 \}
$$
is a hypersurface of $\B(\GG)$, where $\mathcal{C}_v$ is the circuit that corresponds to $v$. By the second item of Theorem \ref{theo_action1}, the complement of $\bigcup_{v \in \mathcal{V}_{\tilde{A}}} \Sigma_v$ in $\B(\GG)$ is $\B^{cc}(\GG)$ and also equals $\B(\GG^{\dagger \mathcal{V}_{\tilde{A}}})$ thanks to Corollary \ref{cor:moduli_truncation}.

\medskip

Theorem \ref{theo_action1} implies that if $\GG$ has a Lann\'{e}r vertex, then $\B^{vf}(\GG) = \varnothing$. Assume that $\GG$ has no Lann\'{e}r vertex. By the third item of Theorem \ref{theo_action1} and 
Remark \ref{rem:parabolic_inv=0}, the intersection $\bigcap_{v \in V_{\tilde{A}}} \Sigma_v$ is $\B^{vf}(\GG)$. In the coordinates defined in Section \ref{subsec:evaluation}, the hypersurface $\Sigma_v$ is a hyperplane. So $\B^{vf}(\GG)$ is a submanifold of $\B(\GG)$.
\end{proof}

\begin{proof}[Proof of Theorem \ref{thm:existence_quasi-divisible}]
Let $P$ be a Coxeter polytope of dimension $d = 8$ realizing the labeled prism $\Ss_{8\tau}^{\dagger}$ or $\Ss_{8\tau}^{\dagger} \sharp_{\phi_j} \Ss_{8\tau}^{\dagger}$ $(j=1,\dotsc,5)$ of Table \ref{tab:excep_8prism}, or a Coxeter polytope of dimension $d = 4$ realizing the prism $\Ss_{4\tau}^{\dagger}$ or $\Ss_{4\tau}^{\dagger} \sharp_{\phi_j} \Ss_{4\tau}^{\dagger}$ $(j=1,\dotsc,3)$ of Table \ref{fig:example_dim4}. Then $P$ is an irreducible, loxodromic, quasi-perfect Coxeter $d$-polytope with one $\tilde{A}_{d-1}$ prismatic circuit, and hence $\Gamma_P$ is not relatively hyperbolic with respect to $\left\{ \G_v \right\}_{v \in \mathcal{V}}$, by Lemma \ref{lem:strictlyconvex}. So, $\Omega_P$ is an indecomposable, inhomogeneous, non-strictly convex, quasi-divisible $d$-domain by $\Gamma_P$ such that $\Quotient{\Omega_P}{\Gamma_P}$ has only generalized cusps of type $0$, by Theorems \ref{theo_action1} and \ref{theo_action2}. 
\end{proof}

\begin{rem}\label{rem:other_quasi-divisible}
There exists an irreducible, loxodromic, quasi-perfect Coxeter $d$-polytope $P_d$ whose reflection group $\Gamma$ is not relatively hyperbolic with respect to $\left\{ \G_v \right\}_{v \in \mathcal{V}}$ in dimension $d = 3$, by \cite[Th.~F]{Marquis:2014aa}, and in dimension $d = 5$ and $7$, by \cite{CLM2} (see the left and the middle diagrams of Table \ref{fig:example_567}). A computation similar to the one in the proof of \cite[Prop.~7.1]{CLM2} can show that such a Coxeter polytope $P_d$ exists also in dimension $d=6$ (see the right diagram of Table \ref{fig:example_567}).
\end{rem}

\begin{table}[ht!]
\begin{minipage}{0.2\textwidth}
\centering
\begin{tikzpicture}[thick,scale=0.6, every node/.style={transform shape}] 
\node[draw, inner sep=5pt] (1) at (-1.3,2.3) {\Large $\Ss_{4\tau}^\dagger$};
\node[draw,circle] (A) at (0:1){};
\node[draw,circle] (B) at (90:1){};
\node[draw,circle] (C) at (180:1){};
\node[draw,circle] (D) at (-90:1){};
\node[draw,circle] (E) at (-1-1.414,0){};
\node[draw,circle] (F) at (-1-1.414-1.414,0){};
\draw (A) -- (B) node[above,midway] {};
\draw (B) -- (C) node[above,midway] {};
\draw (C) -- (D) node[above,midway] {};
\draw (D) -- (A) node[above,midway] {};
\draw (E) -- (C) node[above,midway] {$4$};
\draw (E) -- (F) node[above,midway] {$\infty$};
\end{tikzpicture}
\end{minipage}
\quad
\begin{minipage}{0.2\textwidth}
\centering
\begin{tikzpicture}[thick,scale=0.6, every node/.style={transform shape}] 
\node[draw, inner sep=5pt] (1) at (-0.8,2.3) {\Large $\Ss_{4\tau}^\dagger \sharp_{\phi_1} \Ss_{4\tau}^\dagger$};
\node[draw,circle] (A) at (0:1){};
\node[draw,circle] (B) at (90:1){};
\node[draw,circle] (C) at (180:1){};
\node[draw,circle] (D) at (-90:1){};
\node[draw,circle] (E) at (-0.7-1.4,1){};
\node[draw,circle] (F) at (-0.7-1.4,-1){};
\draw (A) -- (B) node[above,midway] {};
\draw (B) -- (C) node[above,midway] {};
\draw (C) -- (D) node[above,midway] {};
\draw (D) -- (A) node[above,midway] {};
\draw (C) -- (E) node[above,midway] {4};
\draw (C) -- (F) node[below,midway] {4};
\draw (E) -- (F) node[left,midway] {$\infty$};
\end{tikzpicture}
\end{minipage}
\begin{minipage}{0.2\textwidth}
\centering
\begin{tikzpicture}[thick,scale=0.6, every node/.style={transform shape}] 
\node[draw, inner sep=5pt] (1) at (-0.8,2.3) {\Large $\Ss_{4\tau}^\dagger \sharp_{\phi_2} \Ss_{4\tau}^\dagger$};
\node[draw,circle] (A) at (45:1){};
\node[draw,circle] (B) at (135:1){};
\node[draw,circle] (C) at (225:1){};
\node[draw,circle] (D) at (-45:1){};
\node[draw,circle] (E) at (-0.7-1.4,0.7){};
\node[draw,circle] (F) at (-0.7-1.4,-0.7){};
\draw (A) -- (B) node[above,midway] {};
\draw (B) -- (C) node[above,midway] {};
\draw (C) -- (D) node[above,midway] {};
\draw (D) -- (A) node[above,midway] {};
\draw (B) -- (E) node[above,midway] {4};
\draw (C) -- (F) node[below,midway] {4};
\draw (E) -- (F) node[left,midway] {$\infty$};
\end{tikzpicture}
\end{minipage}
\begin{minipage}{0.2\textwidth}
\centering
\begin{tikzpicture}[thick,scale=0.6, every node/.style={transform shape}] 
\node[draw, inner sep=5pt] (1) at (-0.8,2.3) {\Large $\Ss_{4\tau}^\dagger \sharp_{\phi_3} \Ss_{4\tau}^\dagger$};
\node[draw,circle] (A) at (0:1){};
\node[draw,circle] (B) at (90:1){};
\node[draw,circle] (C) at (180:1){};
\node[draw,circle] (D) at (-90:1){};
\node[draw,circle] (E) at (-0.7-1.4,1){};
\node[draw,circle] (F) at (-0.7-1.4,-1){};
\draw (A) -- (B) node[above,midway] {};
\draw (B) -- (C) node[above,midway] {};
\draw (C) -- (D) node[above,midway] {};
\draw (D) -- (A) node[above,midway] {};
\draw (B) -- (E) node[above,midway] {4};
\draw (D) -- (F) node[below,midway] {4};
\draw (E) -- (F) node[left,midway] {$\infty$};
\end{tikzpicture}
\end{minipage}
\caption{The Coxeter diagrams of the labeled 4-prism $\Ss_{4\tau}^{\dagger}$ or $\Ss_{4\tau}^{\dagger} \sharp_{\phi_j} \Ss_{4\tau}^{\dagger}$}
\label{fig:example_dim4}
\end{table}

\begin{table}[ht!]
\begin{minipage}{0.2\textwidth}
\centering
\begin{tikzpicture}[thick,scale=0.6, every node/.style={transform shape}]
\node[draw,circle] (3) at (0,0) {};
\node[draw,circle,below left=0.8cm of 3] (1) {};
\node[draw,circle,above left=0.8cm of 3] (2) {};
\node[draw,circle,above left=0.8cm of 1] (-1) {};

\node[draw,circle,right=0.8cm of 3] (4) {};
\node[draw,circle,above right=0.8cm of 4] (5) {};
\node[draw,circle,below right=0.8cm of 4] (6) {};

\draw (1) -- (-1);
\draw (2) -- (-1);

\draw (2) -- (3);
\draw (3)--(1);
\draw (4) -- (5) node[above,near start] {$4$};
\draw (5) -- (6) node[right,midway] {$m \geqslant 5$};
\draw (3) -- (4) node[above,midway] {};

\end{tikzpicture}
\end{minipage}
\qquad
\begin{minipage}{0.2\textwidth}
\centering
\begin{tikzpicture}[thick,scale=0.6, every node/.style={transform shape}]
\node[draw,circle] (3) at (1,0) {};

\node[draw,circle] (2) at (0.5,0.866) {};
\node[draw,circle] (1) at (0.5,-0.866) {};
\node[draw,circle] (-3) at (-1,0) {};
\node[draw,circle] (-2) at (-0.5,0.866) {};
\node[draw,circle] (-1) at (-0.5,-0.866) {};

\node[draw,circle,right=0.8cm of 3] (4) {};
\node[draw,circle,above right=0.8cm of 4] (5) {};
\node[draw,circle,below right=0.8cm of 4] (6) {};

\draw (3)--(1);
\draw (1) -- (-1);
\draw (-1) -- (-3);
\draw (-3) -- (-2);
\draw (-2) -- (2);
\draw (2) -- (3);

\draw (4) -- (5);
\draw (5) -- (6) node[right,midway] {$m \geqslant 7$};
\draw (3) -- (4) node[above,midway] {};
\end{tikzpicture}
\end{minipage}
\quad\quad
\begin{minipage}{0.2\textwidth}
\centering
\begin{tikzpicture}[thick,scale=0.6, every node/.style={transform shape}] 
\node[draw,circle] (A) at (0:1){};
\node[draw,circle] (B) at (90:1){};
\node[draw,circle] (C) at (180:1){};
\node[draw,circle] (D) at (-90:1){};
\node[draw,circle] (E) at (-1-1.414,0){};
\node[draw,circle] (F) at (-2-1.414,1){};
\node[draw,circle] (G) at (-3-1.414,0){};
\node[draw,circle] (H) at (-2-1.414,-1){};

\draw (A) -- (B) node[above,midway] {};
\draw (B) -- (C) node[above,midway] {};
\draw (C) -- (D) node[above,midway] {};
\draw (D) -- (A) node[above,midway] {};
\draw (E) -- (C) node[above,midway] {};
\draw (E) -- (F) node[above,midway] {};
\draw (F) -- (G) node[above,midway] {};
\draw (G) -- (H) node[above,midway] {};
\draw (H) -- (E) node[above,midway] {};
\end{tikzpicture}
\end{minipage}
\caption{The Coxeter diagrams of $P_5$, $P_7$ and $P_6$ from left to right}
\label{fig:example_567}
\end{table}

\appendix

\clearpage

\section{Spherical and affine Coxeter groups}\label{classi_diagram}

For the reader's convenience, we reproduce below the list of all irreducible spherical Coxeter diagrams and irreducible affine Coxeter diagrams in Table \ref{table:spherical_and_affine_diagram}. As usual we omit the label $3$ of edges from Coxeter diagrams.

\medskip

For spherical Coxeter groups, the index (in particular the $n$ for $A_n$, $B_n$ or $D_n$) is the number of nodes of the diagram. But, for affine Coxeter groups, the index (in particular the $n$ for $\tilde{A}_n$, $\tilde{B}_n$, $\tilde{C}_n$ or $\tilde{D}_n$) is \emph{one less than} the number of nodes.

\medskip
\medskip

\newcommand{\scalevalue}{0.50}

\begin{table}[h]
\begin{tabular}{m{2.5cm} m{3cm} m{1.5cm} m{2.5cm} m{3cm}}
$I_2(p)$ $(p\geqslant 5)$
&
\begin{tikzpicture}[thick,scale=\scalevalue, every node/.style={transform shape}]
\node[draw,circle] (I1) at (0,0) {};
\node[draw,circle,right=.8cm of I1] (I2) {};
\draw (I1) -- (I2)  node[above,midway] {$p$};
\end{tikzpicture}
&&
$\tilde{A}_1$
&
\begin{tikzpicture}[thick,scale=\scalevalue, every node/.style={transform shape}]
\node[draw,circle] (I1) at (0,0) {};
\node[draw,circle,right=.8cm of I1] (I2) {};
\draw (I1) -- (I2)  node[above,midway] {$\infty$};
\end{tikzpicture}
\\
$A_n$ $(n \geqslant 1)$
&
\begin{tikzpicture}[thick,scale=\scalevalue, every node/.style={transform shape}]

\node[draw,circle] (A1) at (0,0) {};
\node[draw,circle,right=.8cm of A1] (A2) {};
\node[draw,circle,right=.8cm of A2] (A3) {};
\node[draw,circle,right=1cm of A3] (A4) {};
\node[draw,circle,right=.8cm of A4] (A5) {};

\draw (A1) -- (A2)  node[above,midway] {};
\draw (A2) -- (A3)  node[above,midway] {};
\draw[ dotted,thick] (A3) -- (A4) node[] {};
\draw (A4) -- (A5) node[above,midway] {};
\end{tikzpicture}
&&
$\tilde{A}_n$ $(n \geqslant 2)$
&
\begin{tikzpicture}[thick,scale=\scalevalue, every node/.style={transform shape}]
\node[draw,circle] (A1) at (0,0) {};
\node[draw,circle,above=.8cm of A1] (A2) {};
\node[draw,circle,right=.8cm of A2] (A3) {};
\node[draw,circle,right=.8cm of A3] (A4) {};
\node[draw,circle,right=.8cm of A4] (A5) {};
\node[draw,circle,below=.8cm of A5] (A6) {};
\node[draw,circle,below=.8cm of A6] (A7) {};
\node[draw,circle,left=.8cm of A7] (A8) {};
\node[draw,circle,left=.8cm of A8] (A9) {};
\node[draw,circle,left=.8cm of A9] (A10) {};

\draw (A1) -- (A2)  node[above,midway] {};
\draw (A2) -- (A3)  node[above,midway] {};
\draw (A3) -- (A4) node[] {};
\draw (A4) -- (A5) node[above,midway] {};
\draw (A5) -- (A6) node[] {};
\draw (A6) -- (A7) node[] {};
\draw (A7) -- (A8) node[] {};
\draw[ dotted,thick] (A8) -- (A9) node[] {};
\draw (A9) -- (A10) node[] {};
\draw (A10) -- (A1) node[] {};
\end{tikzpicture}
\\
\\
$B_n$ $(n \geqslant 2)$

&
\begin{tikzpicture}[thick,scale=\scalevalue, every node/.style={transform shape}]
\node[draw,circle] (B1) at (0,0) {};
\node[draw,circle,right=.8cm of B1] (B2) {};
\node[draw,circle,right=.8cm of B2] (B3) {};
\node[draw,circle,right=1cm of B3] (B4) {};
\node[draw,circle,right=.8cm of B4] (B5) {};

\draw (B1) -- (B2)  node[above,midway] {$4$};
\draw (B2) -- (B3)  node[above,midway] {};
\draw[ dotted,thick] (B3) -- (B4) node[] {};
\draw (B4) -- (B5) node[above,midway] {};
\end{tikzpicture}

&&
$\tilde{B}_2=\tilde{C}_2$

&
\begin{tikzpicture}[thick,scale=\scalevalue, every node/.style={transform shape}]
\node[draw,circle] (H1) at (0,0) {};
\node[draw,circle,right=.8cm of H1] (H2) {};
\node[draw,circle,right=.8cm of H2] (H3) {};

\draw (H1) -- (H2)  node[above,midway] {$4$};
\draw (H2) -- (H3)  node[above,midway] {$4$};
\end{tikzpicture}

\\
$H_3$
&
\begin{tikzpicture}[thick,scale=\scalevalue, every node/.style={transform shape}]
\node[draw,circle] (H1) at (0,0) {};
\node[draw,circle,right=.8cm of H1] (H2) {};
\node[draw,circle,right=.8cm of H2] (H3) {};

\draw (H1) -- (H2)  node[above,midway] {$5$};
\draw (H2) -- (H3)  node[above,midway] {};
\end{tikzpicture}
&&
$\tilde{B}_n$ $(n \geqslant 3)$
&
\begin{tikzpicture}[thick,scale=\scalevalue, every node/.style={transform shape}]
\node[draw,circle] (B1) at (0,0) {};
\node[draw,circle,right=.8cm of B1] (B2) {};
\node[draw,circle,right=.8cm of B2] (B3) {};
\node[draw,circle,right=1cm of B3] (B4) {};
\node[draw,circle,right=.8cm of B4] (B5) {};
\node[draw,circle,above right=.8cm of B5] (B6) {};
\node[draw,circle,below right=.8cm of B5] (B7) {};

\draw (B1) -- (B2)  node[above,midway] {$4$};
\draw (B2) -- (B3)  node[above,midway] {};
\draw[ dotted,thick] (B3) -- (B4) node[] {};
\draw (B4) -- (B5) node[above,midway] {};
\draw (B5) -- (B6) node[above,midway] {};
\draw (B5) -- (B7) node[above,midway] {};
\end{tikzpicture}
\\
$H_4$
&
\begin{tikzpicture}[thick,scale=\scalevalue, every node/.style={transform shape}]
\node[draw,circle] (HH1) at (0,0) {};
\node[draw,circle,right=.8cm of HH1] (HH2) {};
\node[draw,circle,right=.8cm of HH2] (HH3) {};
\node[draw,circle,right=.8cm of HH3] (HH4) {};

\draw (HH1) -- (HH2)  node[above,midway] {$5$};
\draw (HH2) -- (HH3)  node[above,midway] {};
\draw (HH3) -- (HH4)  node[above,midway] {};
\end{tikzpicture}
&&
$\tilde{C}_n$ $(n\geqslant 3)$
&
\begin{tikzpicture}[thick,scale=\scalevalue, every node/.style={transform shape}]
\node[draw,circle] (C1) at (0,0) {};
\node[draw,circle,right=.8cm of C1] (C2) {};
\node[draw,circle,right=.8cm of C2] (C3) {};
\node[draw,circle,right=1cm of C3] (C4) {};
\node[draw,circle,right=.8cm of C4] (C5) {};

\draw (C1) -- (C2)  node[above,midway] {$4$};
\draw (C2) -- (C3)  node[above,midway] {};
\draw[ dotted,thick] (C3) -- (C4) node[] {};
\draw (C4) -- (C5) node[above,midway] {$4$};
\end{tikzpicture}
\\
\\
$D_n$ $(n\geqslant 4)$
&
\begin{tikzpicture}[thick,scale=\scalevalue, every node/.style={transform shape}]

\node[draw,circle] (D1) at (0,0) {};
\node[draw,circle,right=.8cm of D1] (D2) {};
\node[draw,circle,right=1cm of D2] (D3) {};
\node[draw,circle,right=.8cm of D3] (D4) {};
\node[draw,circle, above right=.8cm of D4] (D5) {};
\node[draw,circle,below right=.8cm of D4] (D6) {};

\draw (D1) -- (D2)  node[above,midway] {};
\draw[ dotted] (D2) -- (D3);
\draw (D3) -- (D4) node[above,midway] {};
\draw (D4) -- (D5) node[above,midway] {};
\draw (D4) -- (D6) node[below,midway] {};
\end{tikzpicture}
&&
$\tilde{D}_n$ $(n\geqslant 4)$
&
\begin{tikzpicture}[thick,scale=\scalevalue, every node/.style={transform shape}]
\node[draw,circle] (D1) at (0,0) {};
\node[draw,circle,below right=0.8cm of D1] (D3) {};
\node[draw,circle,below left=0.8cm of D3] (D2) {};
\node[draw,circle,right=.8cm of D3] (DA) {};
\node[draw,circle,right=1cm of DA] (DB) {};
\node[draw,circle,right=.8cm of DB] (D4) {};
\node[draw,circle, above right=.8cm of D4] (D5) {};
\node[draw,circle,below right=.8cm of D4] (D6) {};

\draw (D1) -- (D3)  node[above,midway] {};
\draw (D2) -- (D3)  node[above,midway] {};
\draw (D3) -- (DA) node[above,midway] {};
\draw[ dotted] (DA) -- (DB);
\draw (D4) -- (DB) node[above,midway] {};
\draw (D4) -- (D5) node[above,midway] {};
\draw (D4) -- (D6) node[below,midway] {};
\end{tikzpicture}
\\
\\
$F_4$
&
\begin{tikzpicture}[thick,scale=\scalevalue, every node/.style={transform shape}]
\node[draw,circle] (F1) at (0,0) {};
\node[draw,circle,right=.8cm of F1] (F2) {};
\node[draw,circle,right=.8cm of F2] (F3) {};
\node[draw,circle,right=.8cm of F3] (F4) {};

\draw (F1) -- (F2)  node[above,midway] {};
\draw (F2) -- (F3)  node[above,midway] {$4$};
\draw (F3) -- (F4)  node[above,midway] {};
\end{tikzpicture}
&&
$\tilde{F}_4$
&
\begin{tikzpicture}[thick,scale=\scalevalue, every node/.style={transform shape}]
\node[draw,circle] (F1) at (0,0) {};
\node[draw,circle,right=.8cm of F1] (F2) {};
\node[draw,circle,right=.8cm of F2] (F3) {};
\node[draw,circle,right=.8cm of F3] (F4) {};
\node[draw,circle,right=.8cm of F4] (F5) {};

\draw (F1) -- (F2)  node[above,midway] {};
\draw (F2) -- (F3)  node[above,midway] {$4$};
\draw (F3) -- (F4)  node[above,midway] {};
\draw (F4) -- (F5)  node[above,midway] {};
\end{tikzpicture}
\\
\\
&
&&
$\tilde{G}_2$
&
\begin{tikzpicture}[thick,scale=\scalevalue, every node/.style={transform shape}]
\node[draw,circle] (HH1) at (0,0) {};
\node[draw,circle,right=.8cm of HH1] (HH2) {};
\node[draw,circle,right=.8cm of HH2] (HH3) {};

\draw (HH1) -- (HH2)  node[above,midway] {$6$};
\draw (HH2) -- (HH3)  node[above,midway] {};
\end{tikzpicture}
\\
\\
$E_6$

&
\begin{tikzpicture}[thick,scale=\scalevalue, every node/.style={transform shape}]
\node[draw,circle] (E1) at (0,0) {};
\node[draw,circle,right=.8cm of E1] (E2) {};
\node[draw,circle,right=.8cm of E2] (E3) {};
\node[draw,circle,right=.8cm of E3] (E4) {};
\node[draw,circle,right=.8cm of E4] (E5) {};
\node[draw,circle,below=.8cm of E3] (EA) {};

\draw (E1) -- (E2)  node[above,midway] {};
\draw (E2) -- (E3)  node[above,midway] {};
\draw (E3) -- (E4)  node[above,midway] {};
\draw (E4) -- (E5)  node[above,midway] {};
\draw (E3) -- (EA)  node[left,midway] {};
\end{tikzpicture}
&&
$\tilde{E}_6$

&
\begin{tikzpicture}[thick,scale=\scalevalue, every node/.style={transform shape}]
\node[draw,circle] (E1) at (0,0) {};
\node[draw,circle,right=.8cm of E1] (E2) {};
\node[draw,circle,right=.8cm of E2] (E3) {};
\node[draw,circle,right=.8cm of E3] (E4) {};
\node[draw,circle,right=.8cm of E4] (E5) {};
\node[draw,circle,below=.8cm of E3] (EA) {};
\node[draw,circle,below=.8cm of EA] (EB) {};

\draw (E1) -- (E2)  node[above,midway] {};
\draw (E2) -- (E3)  node[above,midway] {};
\draw (E3) -- (E4)  node[above,midway] {};
\draw (E4) -- (E5)  node[above,midway] {};
\draw (E3) -- (EA)  node[left,midway] {};
\draw (EB) -- (EA)  node[left,midway] {};
\end{tikzpicture}
\\
$E_7$

&
\begin{tikzpicture}[thick,scale=\scalevalue, every node/.style={transform shape}]
\node[draw,circle] (EE1) at (0,0) {};
\node[draw,circle,right=.8cm of EE1] (EE2) {};
\node[draw,circle,right=.8cm of EE2] (EE3) {};
\node[draw,circle,right=.8cm of EE3] (EE4) {};
\node[draw,circle,right=.8cm of EE4] (EE5) {};
\node[draw,circle,right=.8cm of EE5] (EE6) {};
\node[draw,circle,below=.8cm of EE3] (EEA) {};

\draw (EE1) -- (EE2)  node[above,midway] {};
\draw (EE2) -- (EE3)  node[above,midway] {};
\draw (EE3) -- (EE4)  node[above,midway] {};
\draw (EE4) -- (EE5)  node[above,midway] {};
\draw (EE5) -- (EE6)  node[above,midway] {};
\draw (EE3) -- (EEA)  node[left,midway] {};
\end{tikzpicture}
&&
$\tilde{E}_7$

&
\begin{tikzpicture}[thick,scale=\scalevalue, every node/.style={transform shape}]
\node[draw,circle] (EE1) at (0,0) {};
\node[draw,circle,right=.8cm of EE1] (EEB) {};
\node[draw,circle,right=.8cm of EEB] (EE2) {};
\node[draw,circle,right=.8cm of EE2] (EE3) {};
\node[draw,circle,right=.8cm of EE3] (EE4) {};
\node[draw,circle,right=.8cm of EE4] (EE5) {};
\node[draw,circle,right=.8cm of EE5] (EE6) {};
\node[draw,circle,below=.8cm of EE3] (EEA) {};

\draw (EE1) -- (EEB)  node[] {};
\draw (EE2) -- (EEB)  node[] {};
\draw (EE2) -- (EE3)  node[] {};
\draw (EE3) -- (EE4)  node[] {};
\draw (EE4) -- (EE5)  node[] {};
\draw (EE5) -- (EE6)  node[] {};
\draw (EE3) -- (EEA)  node[] {};
\end{tikzpicture}
\\
$E_8$

&
\begin{tikzpicture}[thick,scale=\scalevalue, every node/.style={transform shape}]
\node[draw,circle] (EEE1) at (0,0) {};
\node[draw,circle,right=.8cm of EEE1] (EEE2) {};
\node[draw,circle,right=.8cm of EEE2] (EEE3) {};
\node[draw,circle,right=.8cm of EEE3] (EEE4) {};
\node[draw,circle,right=.8cm of EEE4] (EEE5) {};
\node[draw,circle,right=.8cm of EEE5] (EEE6) {};
\node[draw,circle,right=.8cm of EEE6] (EEE7) {};
\node[draw,circle,below=.8cm of EEE3] (EEEA) {};

\draw (EEE1) -- (EEE2)  node[] {};
\draw (EEE2) -- (EEE3)  node[] {};
\draw (EEE3) -- (EEE4)  node[] {};
\draw (EEE4) -- (EEE5)  node[] {};
\draw (EEE5) -- (EEE6)  node[] {};
\draw (EEE6) -- (EEE7)  node[] {};
\draw (EEE3) -- (EEEA)  node[] {};
\end{tikzpicture}
&&
$\tilde{E}_8$

&
\begin{tikzpicture}[thick,scale=\scalevalue, every node/.style={transform shape}]
\node[draw,circle] (EEE1) at (0,0) {};
\node[draw,circle,right=.8cm of EEE1] (EEE2) {};
\node[draw,circle,right=.8cm of EEE2] (EEE3) {};
\node[draw,circle,right=.8cm of EEE3] (EEE4) {};
\node[draw,circle,right=.8cm of EEE4] (EEE5) {};
\node[draw,circle,right=.8cm of EEE5] (EEE6) {};
\node[draw,circle,right=.8cm of EEE6] (EEE7) {};
\node[draw,circle,right=.8cm of EEE7] (EEE8) {};
\node[draw,circle,below=.8cm of EEE3] (EEEA) {};

\draw (EEE1) -- (EEE2)  node[above,midway] {};
\draw (EEE2) -- (EEE3)  node[above,midway] {};
\draw (EEE3) -- (EEE4)  node[above,midway] {};
\draw (EEE4) -- (EEE5)  node[above,midway] {};
\draw (EEE5) -- (EEE6)  node[above,midway] {};
\draw (EEE6) -- (EEE7)  node[above,midway] {};
\draw (EEE8) -- (EEE7)  node[above,midway] {};
\draw (EEE3) -- (EEEA)  node[left,midway] {};
\end{tikzpicture}
\\
&&&&\\
\end{tabular}
\caption{The irreducible spherical Coxeter diagrams on the left and the irreducible affine Coxeter diagrams on the right.}
\label{table:spherical_and_affine_diagram}
\end{table}

\newpage

\section{Lannér Coxeter groups of rank $4$}\label{appendix:lanner}

We reproduce the list of all Lannér Coxeter diagrams of rank $4$ in Table \ref{Table_dim3}.

\begin{table}[h]
\centering
\begin{tabular}{cc cc cc}
\multicolumn{6}{c}{
\begin{tabular}{>{\centering\arraybackslash}m{.18\linewidth}>{\centering\arraybackslash}m{.18\linewidth}>{\centering\arraybackslash}m{.18\linewidth}>{\centering\arraybackslash}m{.18\linewidth}>{\centering\arraybackslash}m{.18\linewidth}}
\begin{tikzpicture}[thick,scale=0.6, every node/.style={transform shape}]
\node[draw,circle] (C1) at (0,0) {};
\node[draw,circle,right=.8cm of C1] (C2) {};
\node[draw,circle,below=.8cm of C2] (C3) {}; 
\node[draw,circle,left=.8cm of C3] (C4) {};  
\draw (C1) -- (C2)  node[above,midway] {$4$};
\draw (C2) -- (C3)  node[above,midway] {};
\draw (C3) -- (C4) node[above,midway] {};
\draw (C4) -- (C1) node[above,midway] {};
\end{tikzpicture}
&
 \begin{tikzpicture}[thick,scale=0.6, every node/.style={transform shape}]
\node[draw,circle] (D1) at (0,0) {};
\node[draw,circle,right=.8cm of D1] (D2) {};
\node[draw,circle,below=.8cm of D2] (D3) {}; 
\node[draw,circle,left=.8cm of D3] (D4) {};  
\draw (D1) -- (D2)  node[above,midway] {$5$};
\draw (D2) -- (D3)  node[above,midway] {};
\draw (D3) -- (D4) node[above,midway] {};
\draw (D4) -- (D1) node[above,midway] {};
\end{tikzpicture}
&
 \begin{tikzpicture}[thick,scale=0.6, every node/.style={transform shape}]
\node[draw,circle] (E1) at (0,0) {};
\node[draw,circle,right=.8cm of E1] (E2) {};
\node[draw,circle,below=.8cm of E2] (E3) {}; 
\node[draw,circle,left=.8cm of E3] (E4) {};
\draw (E1) -- (E2)  node[above,midway] {$4$};
\draw (E2) -- (E3)  node[above,midway] {};
\draw (E3) -- (E4) node[below,midway] {$4$};
\draw (E4) -- (E1) node[above,midway] {};
\end{tikzpicture}
 &
\begin{tikzpicture}[thick,scale=0.6, every node/.style={transform shape}]
\node[draw,circle] (F1) at (0,0) {};
\node[draw,circle,right=.8cm of F1] (F2) {};
\node[draw,circle,below=.8cm of F2] (F3) {}; 
\node[draw,circle,left=.8cm of F3] (F4) {};
\draw (F1) -- (F2)  node[above,midway] {$5$};
\draw (F2) -- (F3)  node[above,midway] {};
\draw (F3) -- (F4) node[below,midway] {$4$};
\draw (F4) -- (F1) node[above,midway] {};
\end{tikzpicture}
 &
 \begin{tikzpicture}[thick,scale=0.6, every node/.style={transform shape}]
\node[draw,circle] (G1) at (0,0) {};
\node[draw,circle,right=.8cm of G1] (G2) {};
\node[draw,circle,below=.8cm of G2] (G3) {}; 
\node[draw,circle,left=.8cm of G3] (G4) {};
  \draw (G1) -- (G2)  node[above,midway] {$5$};
\draw (G2) -- (G3)  node[above,midway] {};
\draw (G3) -- (G4) node[below,midway] {$5$};
\draw (G4) -- (G1) node[above,midway] {};
\end{tikzpicture}
\\
\end{tabular}
}
\\
\multicolumn{6}{c}{
\begin{tabular}{>{\centering\arraybackslash}m{.22\linewidth}>{\centering\arraybackslash}m{.22\linewidth}>{\centering\arraybackslash}m{.22\linewidth}>{\centering\arraybackslash}m{.22\linewidth}}
\begin{tikzpicture}[thick,scale=0.6, every node/.style={transform shape}]
\node[draw,circle] (C1) at (0,0) {};
 \node[draw,circle,right=.8cm of C1] (C2) {};
\node[draw,circle,right=.8cm of C2] (C3) {}; 
\node[draw,circle,right=.8cm of C3] (C4) {};
\draw (C1) -- (C2)  node[above,midway] {};
\draw (C2) -- (C3)  node[above,midway] {$5$};
\draw (C3) -- (C4) node[] {};
\end{tikzpicture}
&
\begin{tikzpicture}[thick,scale=0.6, every node/.style={transform shape}]
\node[draw,circle] (CC1) at (0,0) {};
\node[draw,circle,right=.8cm of CC1] (CC2) {};
\node[draw,circle,right=.8cm of CC2] (CC3) {}; 
\node[draw,circle,right=1cm of CC3] (CC4) {};
\draw (CC1) -- (CC2)  node[above,midway] {$5$};
\draw (CC2) -- (CC3)  node[above,midway] {};
\draw (CC3) -- (CC4) node[above,midway] {$4$};
\end{tikzpicture}
 &
\begin{tikzpicture}[thick,scale=0.6, every node/.style={transform shape}]
\node[draw,circle] (D1) at (0,0) {};
\node[draw,circle,right=.8cm of D1] (D2) {};
\node[draw,circle,right=.8cm of D2] (D3) {}; 
\node[draw,circle,right=1cm of D3] (D4) {};
\draw (D1) -- (D2)  node[above,midway] {$5$};
\draw (D2) -- (D3)  node[above,midway] {};
\draw (D3) -- (D4) node[above,midway] {$5$};
\end{tikzpicture}
 &
\begin{tikzpicture}[thick,scale=0.6, every node/.style={transform shape}]
\node[draw,circle] (B4) at (0,0) {};
\node[draw,circle,right=.8cm of B4] (B5) {};
\node[draw,circle,above right=.8cm of B5] (B6) {};
\node[draw,circle,below right=.8cm of B5] (B7) {};
\draw (B4) -- (B5) node[above,midway] {$5$};
\draw (B5) -- (B6) node[above,midway] {};
\draw (B5) -- (B7) node[above,midway] {};
\end{tikzpicture}
\end{tabular}
}
\end{tabular}
\vspace{1mm}
\caption{Lannér Coxeter groups of rank $4$}
\label{Table_dim3}
\end{table}

\bigskip

\section{2-Lannér Coxeter groups of rank $\geqslant 5$}\label{classi_lorent}

We reproduce below the complete list of 2-Lannér Coxeter groups of rank $10$, $9$, $8$, $7$, $6$, $5$ respectively in Tables \ref{Table_dim9}, \ref{Table_dim8}, \ref{Table_dim7}, \ref{Table_dim6}, \ref{Table_dim5}, \ref{Table_dim4}. This list is extracted from \cite{MR679972,Chen:2013fk}.

\medskip

There is a bijection between the set of 2-Lannér Coxeter groups $W_S$ of rank $d+1$ and the set of irreducible, large, 2-perfect labeled simplex $\Ss_W$ of dimension $d$. Each facet of $\Ss_W$ corresponds to $s\in S$, i.e. a node of the Coxeter diagram $\mathcal{D}_W$. Since the polytope $\Ss_W$ is a simplex, each vertex $v$ has a unique opposite facet, hence corresponds to an element $s_v \in S$. The link of $\Ss_W$ at $v$ is isomorphic to $\Ss_{W_{S \smallsetminus \{ s_v\}} }$. Each node $s_v$ is colored in black, orange, blue, green, depending on the property of the link of $\Ss_W$ at $v$.

\medskip
 
\begin{center}

\vspace{1em}
\caption{The Coxeter diagrams of the exceptional 5-prisms}
\label{tab:excep_prism}
\end{table}

\bibliographystyle{alpha}

\end{document}